\numberwithin{equation}{section}
\newcommand{\E}{\mathscr{E}}
\newcommand{\F}{\mathcal{F}}
\renewcommand{\H}{\mathcal{H}}
\newcommand{\R}{\mathbb{R}}
\newcommand{\T}{\mathcal{T}}
\newcommand{\Z}{\mathbb{Z}}
\newcommand{\W}{\mathscr{W}}
\newcommand{\loc}{{\rm loc}}
\newcommand{\dive}{\mbox{\normalfont div}}
\newcommand{\Hess}{{\mbox{\normalfont Hess}}}
\newcommand{\Mat}{{\mbox{\normalfont Mat}}}
\newcommand{\sgn}{{\mbox{\normalfont sgn}}}
\theoremstyle{plain}
\newtheorem{theorem}{Theorem}[section]
\newtheorem{proposition}[theorem]{Proposition}
\newtheorem{lemma}[theorem]{Lemma}
\theoremstyle{definition}
\newtheorem{remark}[theorem]{Remark}
\renewcommand{\le}{\leqslant}
\renewcommand{\ge}{\geqslant}
\begin{document}

\title[Singular, degenerate, anisotropic PDEs]{Monotonicity formulae 
and classification results \\ for singular, degenerate, anisotropic PDEs}

\author[Matteo Cozzi, Alberto Farina, Enrico Valdinoci]{
Matteo Cozzi${}^{(1,2)}$
\and
Alberto Farina${}^{(1)}$
\and 
Enrico Valdinoci${}^{(2,3)}$
}

\subjclass[2010]{35J92, 35J93, 35J20.}

\keywords{Wulff shapes, energy monotonicity, rigidity and classification results.}

\thanks{This paper has been supported by ERC grant 277749 ``EPSILON Elliptic Pde's and Symmetry of
Interfaces and Layers for Odd Nonlinearities''.}

\maketitle

\date{}

{\scriptsize \begin{center} (1) -- Laboratoire
Ami\'enois de Math\'ematique Fondamentale et Appliqu\'ee\\
UMR CNRS 7352, Universit\'e Picardie ``Jules Verne'' \\33 Rue St Leu, 
80039 Amiens (France).\\
\end{center}
\scriptsize \begin{center} (2) -- Dipartimento di Matematica
``Federigo Enriques''\\
Universit\`a
degli studi di Milano,\\
Via Saldini 50, I-20133 Milano (Italy).\\
\end{center}
\scriptsize \begin{center} (3) -- Weierstra{\ss}
Institut f\"ur Angewandte Analysis und Stochastik\\
Mohrenstra{\ss}e 39, D-10117 Berlin (Germany).
\end{center}
\medskip
\begin{center}
E-mail addresses: matteo.cozzi@unimi.it,
alberto.farina@u-picardie.fr,
enrico.valdinoci@wias-berlin.de
\end{center}
}
\bigskip

\begin{abstract}
We consider possibly degenerate and singular
elliptic equations in a possibly anisotropic medium.

We obtain monotonicity results for the energy density,
rigidity results for the solutions and classification results
for the singularity/degeneracy/anisotropy allowed.

As far as we know, these results are new even in the case of
non-singular and non-degenerate anisotropic equations.
\end{abstract}

\bigskip

\section{Introduction and main results}

\subsection{Description of the model and mathematical setting}

The goal of this paper is to consider partial differential
equations in a possibly anisotropic medium.

The interest in the study of
anisotropic media is twofold. First, at a purely mathematical level, the lack of isotropy reflects into a rich
geometric structure in which the basic objects of investigation
do not possess the usual Euclidean properties. Then, from
the point of view of concrete applications, anisotropic media
naturally arise in the study of crystals, see e.g.~\cite{C84}
and the references therein. The interplay
between the concrete physical problems and the
geometric structures is clearly discussed, for instance, in~\cite{T78, TCH92}. We also refer to Appendix~C in~\cite{CFV14}
for a simple physical application.

The equations that we consider in the present paper
have a variational structure
and they are of elliptic type, though the ellipticity
is allowed to be possibly singular or degenerate.

The forcing term only depends on the values of the solution,
i.e., in jargon,
the equation is quasilinear, and the elliptic operator
is constant along the level sets of the solution.
This feature imposes strong geometric restrictions
on the solution, and the purpose of this paper is to
better understand some of these properties.
\medskip

In this setting we present a variety of results
from different perspectives, such as:
\begin{itemize}
\item a monotonicity formula for the energy functional
(i.e., the energy of an anisotropic ball, suitably rescaled,
will be shown to be non-decreasing with respect to the size of
such ball);
\item a rigidity result of Liouville type (namely,
if the potential is integrable, then the solution needs
to be constant);
\item a precise classification of some of the assumptions
given in the literature, with concrete examples and some
simplifications.
\end{itemize}
\medskip

The formal mathematical notation introduces the solution~$u$
of an anisotropic equation driven by a possibly nonlinear
operator. The anisotropic term is encoded into a homogeneous
function~$H$, that will be often referred to as ``the anisotropy''.
The nonlinearity feature of the operator is given by a function~$B$
(e.g., the function~$B$ can be a power and produce an equation
of $p$-Laplace type). Also, the nonlinear source term
arises from a potential~$F$.
\medskip

More precisely,
given measurable set~$\Omega \subset \R^n$, with~$n \ge 2$,
we consider the Wulff type energy functional
\begin{equation} \label{Wen}
\W_\Omega(u) := \int_\Omega B(H(\nabla u(x))) - F(u(x)) \, dx,
\end{equation}
and the associated Euler-Lagrange equation
\begin{equation} \label{eleq}
\frac{\partial}{\partial x_i} 
\Big( B'( H(\nabla u) ) H_i(\nabla u) \Big) + F'(u) = 0.
\end{equation}

Here, the function~$B$ belongs to~$C^{3, \beta}_\loc((0, +\infty)) \cap C^1([0, +\infty))$, with~$\beta \in (0, 1)$, and is such that~$B(0) = B'(0) = 0$ and
\begin{equation} \label{Bpos}
B(t), B'(t), B''(t) > 0 \mbox{ for any } t > 0.
\end{equation}
Moreover,~$H$ is a positive homogeneous function of degree~$1$, of class~$C^{3, \beta}_\loc(\R^n \setminus \{ 0 \})$ and for which
\begin{equation} \label{Hpos}
H(\xi) > 0 \mbox{ for any } \xi \in \R^n \setminus \{ 0 \}.
\end{equation}
Using its homogeneity properties, we infer that~$H$ can be naturally extended to a continuous function on the whole of~$\R^n$ by setting~$H(0) = 0$. 
Moreover, the forcing term~$F$ is required to be~$C^{2, \beta}_\loc(\R)$.

In addition to these hypotheses we also assume one of the following conditions to hold:

\begin{enumerate}[(A)]
\item There exist~$p > 1$,~$\kappa \in [0, 1)$ and positive~$\gamma, \Gamma$ such that, for any~$\xi \in \R^n \setminus \{ 0 \}$,~$\zeta \in \R^n$,
$$
\left[ \Hess \,(B \circ H)(\xi) \right]_{i j} \zeta_i \zeta_j \ge \gamma {(\kappa + |\xi|)}^{p - 2} {|\zeta|}^2, \label{BHpell}
$$
and
$$
\sum_{i, j = 1}^n \left| \left[ \Hess \,(B \circ H)(\xi) \right]_{i j} \right| \le \Gamma {(\kappa + |\xi|)}^{p - 2}.
$$
\item The composition~$B \circ H$ is of class~$C^{3,\beta}_\loc(\R^n)$ 
and for any~$K>0$
there exist a positive constant~$\gamma$ such that,
for any~$\xi, \zeta \in \R^n$, with~$|\xi|\le K$, we have
\begin{equation*}
\left[ \Hess \,(B \circ H)(\xi) \right]_{i j} \zeta_i \zeta_j \ge \gamma 
\,{|\zeta|}^2.
\end{equation*}
\end{enumerate}

In~\cite[Appendix~A]{CFV14} we showed that hypothesis~(A) is fulfilled for instance by taking~$B(t) = t^p / p$ together with an~$H$ whose anisotropic
unit ball
\begin{equation} \label{Hball}
B^H_1 = \left\{ \xi \in \R^n : H(\xi) < 1 \right\}, 
\end{equation}
is uniformly convex, i.e. such that the principal curvatures of its boundary are bounded away from zero. Every anisotropy~$H$ having uniformly convex unit ball will be called \emph{uniformly elliptic}. We remark that, since the second fundamental form of~$\partial B_1^H$ at a point~$\xi \in \partial B_1^H$ is given by
$$
\mathrm{I\!I}_\xi(\zeta, \upsilon) = \frac{H_{i j}(\xi) \zeta_i \upsilon_j }{|\nabla H(\xi)|} \mbox{ for any } \zeta, \upsilon \in \nabla H(\xi)^\perp,
$$
as can bee seen for instance in~\cite[Appendix~A]{CFV14}, and being~$\partial B_1^H$ compact, the uniform ellipticity of~$H$ is equivalent to ask
\begin{equation} \label{Huniell}
H_{i j}(\xi) \zeta_i \zeta_j \ge \lambda |\zeta|^2 \mbox{ for any } \xi \in \partial B_1^H, \, \zeta \in \nabla H(\xi)^\perp,
\end{equation}
for some~$\lambda > 0$. Any positive~$\lambda$ for which~\eqref{Huniell} is satisfied will be said to be an \emph{ellipticity constant} for~$H$. Notice that, by homogeneity,~\eqref{Huniell} actually extends to
\begin{equation} \label{Huniell2}
H_{i j}(\xi) \zeta_i \zeta_j \ge \lambda |\xi|^{- 1} |\zeta|^2 \mbox{ for any } \xi \in \R^n \setminus \{ 0 \}, \, \zeta \in \nabla H(\xi)^\perp.
\end{equation}

\medskip
 
We associate to our solution~$u \in L^\infty(\R^n)$ the finite 
quantities
$$ u^* := \sup_{\R^n} u \ {\mbox{ and }} \
u_* := \inf_{\R^n} u,$$ 
and the gauge
\begin{equation} \label{cudef}
c_u := \sup \left\{ F(t) : t \in \left[ u_*, u^* \right] \right\}.
\end{equation}
Finally, for~$t \in \R$ we set
\begin{equation} \label{Gdef}
G(t) := c_u - F(t).
\end{equation}
Notice that such~$G$ is a non-negative function on the range of~$u$
and that putting it in place of~$-F$ in~\eqref{Wen} does not change at all the setting, once~$u$ is fixed.

\medskip

In the forthcoming Subsections~\ref{XX1}--\ref{XX2}, we give precise statements of our main results. We point out that, to the best of our knowledge, these results are new even in the case in which~$B(t)=t^2/2$ (i.e. even in the case in which the elliptic operator is non-singular and non-degenerate).

\subsection{A monotonicity formula}\label{XX1}

Monotonicity formulae are a classical topic in geometric variational analysis.
Roughly speaking, 
the idea of monotonicity formulae is that a suitably
rescaled energy functional in a ball possesses some
monotonicity properties with respect to the radius of the ball
(in our case, the situation is geometrically
more complicated, since the ball is non-Euclidean).

Of course, monotonicity formulae are important, since
they provide a quantitative information on the energy
of the problem; moreover, they often provide additional
information on the asymptotic behaviour of the solutions,
also in connection with blow-up and blow-down limits,
and they play a special role in rigidity and classification results.
\medskip

One of the main results of the present paper consists in 
a monotonicity formula for a suitable rescaled version
of the functional~\eqref{Wen}, over the family of sets indexed by~$R > 0$,
\begin{equation} \label{Wball}
W_R^H = W_R := \left\{ x \in \R^n : H^*(x) < R \right\},
\end{equation}
where, for~$x \in \R^n$,
\begin{equation} \label{H*def}
H^*(x) := \sup_{\xi \in S^{n - 1}} \frac{\langle x, \xi \rangle}{H(\xi)},
\end{equation}
is the dual function of~$H$. Notice that~$H^*$ is a positive homogeneous function of degree~$1$ and that it is at least of class~$C^2(\R^n \setminus \{ 0 \})$, as showed in Lemma~\ref{H*reg}
below. The set~$W_R$ is the so-called Wulff shape of radius~$R$ associated to~$H$. We refer to~\cite{CS09, WX11} for some basic properties of this set and to~\cite{T78} for a nice geometrical construction. The precise statement is given by

\begin{theorem} \label{monformthm}
Assume that one of the following conditions to be valid:
\begin{enumerate}[(i)]
\item Assumption~(A) holds and~$u \in L^\infty(\R^n) \cap W_\loc^{1, p}(\R^n)$ is a weak solution of~\eqref{eleq} in~$\R^n$;
\item Assumption~(B) holds and $u \in W^{1, \infty}(\R^n)$ weakly solves~\eqref{eleq} in~$\R^n$.
\end{enumerate}
If~$(i)$ is in force, assume in addition that~$H$ satisfies, for any~$\xi, x \in \R^n$,
\begin{equation} \label{FKweak}
\sgn \langle H(\xi) \nabla H(\xi), H^*(x) \nabla H^*(x) \rangle = \sgn \langle \xi, x \rangle.
\end{equation}
Then, the rescaled energy defined by
\begin{equation} \label{scaledWen}
\E(u; R) = \E(R) := \frac{1}{R^{n - 1}} \int_{W_R} B(H(\nabla u(x))) + G(u(x)) \, dx,
\end{equation}
for any~$R > 0$, is monotone non-decreasing.
\end{theorem}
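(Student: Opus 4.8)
The plan is to follow the classical Pohozaev/Modica-type route adapted to the Wulff geometry. First I would differentiate $\E(R)$ in $R$; writing the integral over $W_R$ in terms of the co-area formula with respect to the level sets $\{H^* = \rho\}$, one gets
\[
\E'(R) = -\,\frac{n-1}{R^n} \int_{W_R} \big( B(H(\nabla u)) + G(u) \big)\,dx + \frac{1}{R^{n-1}} \int_{\partial W_R} \frac{B(H(\nabla u)) + G(u)}{|\nabla H^*|}\,d\H^{n-1},
\]
using that $H^*$ is homogeneous of degree $1$ so $\partial W_R$ is the $R$-dilate of $\partial W_1$ and $\nabla H^*$ is $0$-homogeneous. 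The goal is to show the right-hand side is non-negative; equivalently, after multiplying by $R^n$, that the boundary term dominates $(n-1)$ times the bulk energy.

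\textbf{The Pohozaev identity.} The heart of the matter is to produce, from the equation~\eqref{eleq}, a divergence identity whose integration over $W_R$ relates the bulk energy to a boundary flux. The natural vector field to test against is the ``anisotropic position field'' built from $H^*$, i.e. one multiplies~\eqref{eleq} by $\langle \nabla u, V\rangle$ where $V(x)$ is (a multiple of) $H^*(x)\nabla H^*(x)$, or equivalently exploits that $x \cdot \nabla H^*(x) = H^*(x)$ and that $\nabla H^*$ is dual to $\nabla H$ in the sense that on $\{H^* = 1\}$ the outer normal is parallel to $\nabla H^*$. Using the homogeneity relations $\xi\cdot\nabla H(\xi) = H(\xi)$, $H_i(\xi)$ is $0$-homogeneous, and the Euler relation for $B\circ H$, one integrates by parts to obtain something of the schematic form
\[
(n-1)\int_{W_R} \big( B(H(\nabla u)) + G(u)\big)\,dx \;\le\; R \int_{\partial W_R} \frac{B(H(\nabla u)) + G(u)}{|\nabla H^*|}\,d\H^{n-1} \;-\; R\int_{\partial W_R} (\text{a manifestly sign-definite correction})\,d\H^{n-1},
\]
where the correction term should be a ``square'': a combination like $B'(H(\nabla u))\,\langle \nabla H(\nabla u), \nu\rangle\,\langle \nabla u,\nu\rangle$ minus tangential contributions, which by the convexity~\eqref{Bpos} and the duality between $H$ and $H^*$ (so that $\langle H(\xi)\nabla H(\xi), H^*(\nabla H^*)\rangle$ has a sign controlled by~\eqref{FKweak}) turns out to have a favorable sign. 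This is exactly the point where hypothesis~\eqref{FKweak} in case~$(i)$ enters: it guarantees the compatibility of the two dual gauges so that the boundary correction does not spoil monotonicity; in case~$(ii)$ the $C^{3,\beta}$-regularity up to the origin makes the computation legitimate and one expects~\eqref{FKweak} to hold automatically (or the correction to vanish for a different structural reason).

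\textbf{Key steps in order.} (1) Establish the pointwise/weak Pohozaev identity: test~\eqref{eleq} against $x_k \partial_k u$, integrate over $W_R$, and collect terms using Euler's homogeneity relations for $B\circ H$ and $F'(u)u$; (2) rewrite every bulk term so that $\int_{W_R} B(H(\nabla u)) + G(u)$ appears with coefficient $(n-1)$ on one side and a boundary integral over $\partial W_R$ on the other; (3) identify the boundary integrand, split the normal and tangential parts of $\nabla u$ on $\partial W_R$, and use~\eqref{Bpos} together with the $H$--$H^*$ duality and~\eqref{FKweak} to show the ``error'' part of the boundary term has the right sign; (4) divide by $R^n$ and compare with the expression for $\E'(R)$ obtained from the co-area formula to conclude $\E'(R)\ge 0$ in the distributional sense, hence $\E$ is non-decreasing.

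\textbf{Main obstacle.} The principal difficulty is the low regularity: under~(A) the operator is degenerate/singular and $u$ is only $W^{1,p}_{\loc}$, so the Pohozaev identity cannot be derived by naive pointwise manipulation. I expect the bulk of the work to be a careful approximation/truncation argument — regularizing the equation (e.g. replacing $(\kappa + |\xi|)^{p-2}$-type degeneracy by a nondegenerate approximation, or working with a truncated test field and exploiting the known $C^{1,\alpha}_{\loc}$ regularity of weak solutions to such equations) and passing to the limit in the boundary integrals, which requires controlling traces of $\nabla u$ on the hypersurfaces $\partial W_R$ for a.e. $R$ via the co-area formula. The anisotropic geometry adds bookkeeping (one must consistently use $H$ on gradients and $H^*$ on positions, and the regularity of $H^*$ from Lemma~\ref{H*reg}), but the genuinely delicate point is justifying the integration by parts and the a.e.-$R$ trace argument in the degenerate setting.
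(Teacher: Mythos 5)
Your overall route coincides with the paper's: differentiate $\E$ via the coarea formula (obtaining the boundary integrand $[B(H(\nabla u))+G(u)]\,|\nabla H^*|^{-1}=[B(H(\nabla u))+G(u)]\,H(\nu)$), derive a Rellich--Pohozaev identity by pairing the equation with the anisotropic position field (the paper uses the identity $x=H^*(x)\nabla H(\nabla H^*(x))$ and the divergence theorem, which is the same as testing against $x_i\partial_i u$), invoke~\eqref{FKweak} to give the resulting boundary flux $\langle\nabla H(\nabla u),\nabla H^*(x)\rangle\langle\nabla u,x\rangle$ a favorable sign, and regularize the problem (the paper replaces $B$ by $B_\varepsilon(t)=B(\sqrt{\varepsilon^2+t^2})-B(\varepsilon)$, solves Dirichlet problems on $W_R$ with boundary datum $u$, obtains uniform $C^{1,\alpha'}\cap H^2$ bounds, and passes to the limit). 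Your identification of the main technical obstacle (justifying the integration by parts at low regularity) and of where~\eqref{FKweak} enters is accurate, as is your expectation that in case~$(ii)$ the condition is automatic (there $H$ is forced to be of the form~\eqref{HM}, which satisfies the stronger~\eqref{FK}).

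However, there is a genuine gap at the final step, hidden in your item~(2). The Pohozaev manipulation does \emph{not} produce $(n-1)\int_{W_R}(B+G)$ on the bulk side: after the two integrations by parts one obtains
\begin{equation*}
\E'(R)\;\ge\;\frac{1}{R^{n}}\int_{W_R}\Big[\,B(H(\nabla u))+G(u)-B'(H(\nabla u))\,H(\nabla u)\,\Big]\,dx ,
\end{equation*}
i.e. the bulk energy appears with coefficient $n$ together with the extra deficit $-\int_{W_R}B'(H(\nabla u))H(\nabla u)$, and passing from $n$ to $n-1$ is exactly equivalent to the non-negativity of the integrand above. This non-negativity is \emph{not} a consequence of convexity or of the Pohozaev structure: it is the pointwise Modica-type gradient estimate $B'(H(\nabla u))H(\nabla u)-B(H(\nabla u))\le G(u)$, proved in~\cite[Theorem~1.1]{CFV14}, and it is an indispensable external input (the paper flags this dependence explicitly in the introduction). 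Without this estimate your scheme stalls: the boundary ``correction'' controlled by~\eqref{FKweak} accounts only for the boundary flux term, not for this bulk deficit, so monotonicity does not follow from steps (1)--(4) as written.
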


Observe that when~$B(t)=t^2/2$ and~$H(x)=|x|$
(i.e. when the operator is simply the Laplacian and the equation
is isotropic), then 
the result of Theorem~\ref{monformthm}
reduces to the classical
monotonicity formula proved in~\cite{M89}.
Then, if~$H(x)=|x|$, the results of~\cite{M89}
were extended to the non-linear case in~\cite{CGS94}. 
Differently from the existing literature,
here we introduce the presence of a general non-Euclidean anisotropy~$H$ 
(also, we remove an unnecessary assumption on
the sign of~$F$).

We remark that the anisotropic term in the monotonicity formula
provides a number of geometric complications. Indeed,
in our case, the unit ball~$B_1^H$ is not Euclidean and it does
not coincide with its dual ball~$W_1^H$, and a point on the unit sphere
does not coincide in general with the normal to the sphere.

Also, we mention that Theorem~\ref{monformthm}
relies on the pointwise gradient estimate proved in~\cite[Theorem~1.1]{CFV14}.

\subsection{Geometric conditions on the anisotropy and classification results}

In the statement of the monotonicity formula the new condition~\eqref{FKweak} is assumed on~$H$. Here, we plan to shed some light on its origin and to better understand its implications.

First, we point out that this assumption comes as a weaker form of the more restrictive
\begin{equation} \label{FK}
\langle H(\xi) \nabla H(\xi), H^*(x) \nabla H^*(x) \rangle = \langle \xi, x \rangle,
\end{equation}
for any~$\xi, x \in \R^n$. To the authors' knowledge, this latter condition has been first introduced in~\cite{FK09} to recover the validity of the mean value property for~$Q$-harmonic functions, that are the solutions of the equation
\begin{equation} \label{Qu=0}
Q u := \frac{\partial}{\partial x_i} \Big( H(\nabla u) H_i(\nabla u) \Big) = 0.
\end{equation}
Notice that such solutions are the counterparts of harmonic functions in the anisotropic framework and that equation~\eqref{Qu=0} is 
a particular case of
our setting by taking~$B(t) = t^2/2$ and~$F = 0$.

Examples of homogeneous functions~$H$ for which~\eqref{FK} is valid are the norms displayed in~\eqref{HM}, as showed by Lemma~\ref{FKneclem}. Note that we do not assume~\eqref{FKweak} in case~$(ii)$ of Theorem~\ref{monformthm}. Indeed, hypothesis~(B) forces $H$ to be of the form~\eqref{HM}, as shown in~\cite[Appendix~B]{CFV14} (this can also be deduced from the forthcoming Theorem~\ref{T-RES}). In the next result we emphasize that anisotropies as
the one in~\eqref{HM} are actually the~\emph{only} ones which satisfy~\eqref{FK}.

\begin{theorem} \label{FKcharprop}
Let~$H \in C^1(\R^n \setminus \{ 0 \})$ be a positive homogeneous function of degree~$1$ satisfying~\eqref{Hpos}. Assume 
that its unit ball~$B_1^H$, as defined by~\eqref{Hball}, is
strictly convex. Then, condition~\eqref{FK} is equivalent to
asking~$H$ to be of the form
\begin{equation} \label{HM}
H_M(\xi) = \sqrt{\langle M \xi, \xi \rangle},
\end{equation}
for some symmetric and positive definite matrix~$M \in \Mat_n(\R)$.
\end{theorem}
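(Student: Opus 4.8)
The easy direction is that $H=H_M$ satisfies \eqref{FK}: if $H(\xi)=\sqrt{\langle M\xi,\xi\rangle}$ then a direct computation gives $H(\xi)\nabla H(\xi)=M\xi$, and the dual function is $H^*(x)=\sqrt{\langle M^{-1}x,x\rangle}$, so $H^*(x)\nabla H^*(x)=M^{-1}x$; hence $\langle H(\xi)\nabla H(\xi),H^*(x)\nabla H^*(x)\rangle=\langle M\xi,M^{-1}x\rangle=\langle\xi,x\rangle$. (This is essentially the content of Lemma~\ref{FKneclem}, which I may simply cite.) The substance is the converse: \eqref{FK} forces $H$ to be of this special form.

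The plan for the converse is to exploit the geometric meaning of the map $\xi\mapsto H(\xi)\nabla H(\xi)$. Let $\Phi(\xi):=\tfrac12\nabla(H^2)(\xi)=H(\xi)\nabla H(\xi)$ and $\Psi(x):=\tfrac12\nabla((H^*)^2)(x)=H^*(x)\nabla H^*(x)$; both are positively $1$-homogeneous. A classical fact in convex duality (since $B_1^H$ is strictly convex, $H$ is smooth away from $0$, and $H^{**}=H$) is that $\Phi$ and $\Psi$ are inverse bijections of $\R^n\setminus\{0\}$: indeed $\Phi(\xi)$ is the unique point where the supremum defining $H^*$ at $x=\Phi(\xi)/H(\xi)$... more cleanly, $\Psi\circ\Phi=\mathrm{id}$ follows from the homogeneous Legendre transform identities. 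Then \eqref{FK} reads $\langle\Phi(\xi),\Psi(x)\rangle=\langle\xi,x\rangle$ for all $\xi,x$; substituting $x=\Phi(\eta)$ and using $\Psi(\Phi(\eta))=\eta$ gives
\begin{equation*}
\langle\Phi(\xi),\eta\rangle=\langle\xi,\Phi(\eta)\rangle\qquad\text{for all }\xi,\eta\in\R^n\setminus\{0\},
\end{equation*}
so the Jacobian $D\Phi(\xi)$ is symmetric for each $\xi$. Now $\Phi=\nabla\big(\tfrac12 H^2\big)$ already, so $D\Phi$ is automatically symmetric — the real information is extracted by differentiating the bilinear identity twice, or, equivalently, by observing that $\xi\mapsto\langle\Phi(\xi),\eta\rangle$ is $1$-homogeneous and, as a function of $\eta$, linear; Euler's relation $\langle\Phi(\xi),\xi\rangle=H(\xi)^2$ together with the symmetry $\langle\Phi(\xi),\eta\rangle=\langle\Phi(\eta),\xi\rangle$ shows that $q(\xi):=H(\xi)^2$ satisfies $\langle\nabla q(\xi),\eta\rangle=\langle\nabla q(\eta),\xi\rangle$. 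Differentiating this in $\eta$ at a second point shows $\Hess\, q$ is constant: indeed $\partial_{\eta_j}$ of the left side is $\partial_j q$... rather, differentiate $\langle\nabla q(\xi),\eta\rangle-\langle\nabla q(\eta),\xi\rangle=0$ with respect to $\xi_k$ and $\eta_j$ to get $q_{kj}(\xi)\cdot 1 = q_{jk}(\eta)$ evaluated appropriately, forcing $\Hess\, q(\xi)=\Hess\, q(\eta)=:2M$ for all $\xi,\eta$. Hence $q$ is a quadratic form, $H(\xi)^2=\langle M\xi,\xi\rangle$ (the linear and constant terms vanish by $1$-homogeneity of $\nabla q$, i.e. $2$-homogeneity of $q$), and $M$ is symmetric. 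Positivity of $H$ on $\R^n\setminus\{0\}$ gives that $M$ is positive definite, so $H=H_M$.

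The main obstacle is making the duality step rigorous under the stated hypotheses: I only know $H\in C^1(\R^n\setminus\{0\})$ with $B_1^H$ strictly convex, so I must check that (a) $H^*$ is genuinely differentiable — strict convexity of $B_1^H$ is exactly what guarantees the supremum in \eqref{H*def} is attained at a unique $\xi$, hence $H^*\in C^1$ away from $0$ by a standard envelope/Danskin argument — and (b) $\Phi$ and $\Psi$ are mutually inverse, which again rests on strict convexity and on the biduality $H^{**}=H$ (valid since $H$ is a norm-like $1$-homogeneous convex function — note $B_1^H$ convex forces $H$ convex). A secondary subtlety is the final differentiation argument: since $\Phi$ is only $C^0$ a priori (as $H$ is only $C^1$), I should run the "constant Hessian" step at the level of the distributional identity $\langle\nabla q(\xi),\eta\rangle=\langle\nabla q(\eta),\xi\rangle$ and bootstrap, or simply note that this identity, holding for all $\xi,\eta$, already says $\nabla q$ is the linear map $\eta\mapsto$ (something symmetric) composed suitably — concretely, fix $\eta=e_j$ to see $\partial_j q(\xi)=\langle\nabla q(e_j),\xi\rangle$ is linear in $\xi$, whence $q$ is a quadratic form outright, with matrix $\big(\langle\nabla q(e_j),e_k\rangle\big)_{jk}$ symmetric by the identity. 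That bypasses any regularity worry. I expect the convex-analysis lemmas (smoothness and invertibility of the duality map under strict convexity) to be the part requiring the most care, and I would isolate them as preliminary remarks or cite \cite[Appendix~A]{CFV14}.
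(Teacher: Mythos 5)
Your argument is correct and follows essentially the same route as the paper: use strict convexity to see that $\Psi_H=H\nabla H$ and $\Psi_{H^*}=H^*\nabla H^*$ are mutually inverse (this is the paper's Lemma~\ref{H*reg}), substitute $x=\Psi_H(\eta)$ into~\eqref{FK} to obtain the symmetry $\langle\Psi_H(\xi),\eta\rangle=\langle\xi,\Psi_H(\eta)\rangle$, and then conclude linearity of $\Psi_H$ by plugging in basis vectors — your final clean step (``fix $\eta=e_j$ to see $\partial_j q(\xi)=\langle\nabla q(e_j),\xi\rangle$ is linear in $\xi$'') is exactly the content of the paper's Lemma~\ref{symlem}. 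The digression about differentiating twice to get a constant Hessian is unnecessary (and, as you noticed, would raise spurious regularity issues); you should drop it and keep only the basis-vector argument. One small point where you are actually more careful than the paper: the paper invokes Lemma~\ref{H*reg} to get invertibility of $\Psi_H$, but that lemma's hypotheses ($C^2$ and positive-definite Hessian of $B\circ H$) are stronger than the $C^1$ plus strict convexity assumed in Theorem~\ref{FKcharprop}; your remark that the duality facts must be re-derived from strict convexity directly (or cited appropriately) addresses a genuine gap in the paper's exposition.
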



{F}rom Theorem~\ref{FKcharprop},
it follows that assumption~\eqref{FK}
imposes some severe restrictions on the geometric structure of its 
unit ball, which is always an Euclidean
ellipsoid. A natural question is therefore
to understand in which sense
our condition~\eqref{FKweak} is more general. For this scope,
we will discuss condition~\eqref{FKweak}
in detail, by making concrete examples and obtaining a 
complete characterization
in the plane. Roughly speaking, the unit ball in the
plane under condition~\eqref{FKweak} can be constructed by
considering a curve in the first quadrant that satisfies a suitable,
explicit differential inequality, and then~\emph{reflecting}
this curve in the other quadrants (of course,
if higher regularity on the ball is required, this gives
further conditions on the derivatives of the curve at
the reflection points). The detailed characterization
of condition~\eqref{FKweak} in the plane is given by the following technical but operational result.

\begin{proposition} \label{2Dprop-i}
Let~$r: [0, \pi/2] \to (0, +\infty)$ be a given~$C^2$ function satisfying
\begin{equation} \label{rcond1-i}
r(\theta) r''(\theta) < 2 r'(\theta)^2 + r(\theta)^2 \mbox{ for a.a. } \theta \in \left[ 0, \frac{\pi}{2} \right],
\end{equation}
and
\begin{equation} \label{rcond2-i}
r(0) = 1, \qquad r(\pi/2) = r^*, \qquad r'(0) = r'(\pi/2) = 0,
\end{equation}
for some~$r^* \ge 1$. Consider the~$\pi$-periodic function~$\widetilde{r}: \R \to (0, +\infty)$ defined on~$[0, \pi]$ by
\begin{equation*}
\widetilde{r}(\theta) :=
\begin{dcases}
r(\theta) & \quad {\mbox{if }} 0 \le \theta \le \frac{\pi}{2}, \\
\frac{r^* \sqrt{r(\tau^{-1}(\theta))^2 + r'(\tau^{-1}(\theta))^2}}{
r(\tau^{-1}(\theta))^2} & \quad {\mbox{if }} \frac{\pi}{2} \le 
\theta \le \pi,
\end{dcases}
\end{equation*}
where~$\tau: [0, \pi/2] \to [\pi/2, \pi]$ is the bijective map given by
\begin{equation*}
\tau(\eta) = \frac{\pi}{2} + \eta - \arctan \frac{r'(\eta)}{r(\eta)}.
\end{equation*}
Then,~$\widetilde{r}$ is of class~$C^1(\R)$, the set
\begin{equation} \label{Cdef-i}
\left\{ (\rho \cos \theta, \rho \sin \theta) : \rho \in [0, \widetilde{r}(\theta)), \, \theta \in [0, 2 \pi] \right\},
\end{equation}
is strictly convex and its supporting function
$$
\widetilde{H}(\rho \cos \theta, \rho \sin \theta) := \frac{\rho}{\widetilde{r}(\theta)},,
$$
defined for~$\rho \ge 0$ and~$\theta \in [0, 2 \pi]$, satisfies~\eqref{FKweak}.

Furthermore, up to a rotation and a homothety of the plane~$\R^2$, any even positive $1$-homogeneous function~$H \in C^2(\R^2 \setminus \{ 0 \})$ satisfying~\eqref{Hpos}, having strictly convex unit ball~$B_1^H$ and for which condition~\eqref{FKweak} holds true is such that~$B_1^H$ is of the form~\eqref{Cdef-i}, for some positive~$r \in C^2([0, \pi/2])$ satisfying~\eqref{rcond1-i} and~\eqref{rcond2-i}. 
\medskip

In addition, if~$H \in C^{3,\alpha}_\loc(\R^2 \setminus \{ 0 \})$,
for some~$\alpha\in(0,1]$, we have that~$H$ is uniformly elliptic
and satisfies condition~\eqref{FKweak} 
if and only if~$r \in C^{3, \alpha}([0, \pi / 2])$, inequality~\eqref{rcond1-i} is satisfied at any~$\theta \in [0, \pi / 2]$ and
$$
r'' \left( \frac{\pi}{2} \right) = - \frac{r^* r''(0)}{1 - r''(0)}, \qquad
r''' \left( \frac{\pi}{2} \right) = - \frac{r^* r'''(0)}{(1 - r''(0))^3},
$$
hold along with~\eqref{rcond2-i}.
\end{proposition}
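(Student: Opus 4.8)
\section*{Proof strategy}

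The plan is to translate condition~\eqref{FKweak} into a single functional equation for the \emph{Gauss map} of~$B_1^H$, written in polar coordinates, and then to read the geometry of~$B_1^H$ off that equation. Denote by~$\rho=\rho(\theta)$ the radial function of~$B_1^H$, so that~$H(\varrho\cos\theta,\varrho\sin\theta)=\varrho/\rho(\theta)$, and set~$\Theta(\theta):=\theta-\arctan\!\big(\rho'(\theta)/\rho(\theta)\big)$, the angle of the outer normal to~$\partial B_1^H$ at the boundary point of polar angle~$\theta$. First I would record the duality~$H^*(x)\nabla H^*(x)=\Phi^{-1}(x)$, where~$\Phi(\xi):=H(\xi)\nabla H(\xi)$; this follows from~$H^*(\nabla H(\xi))=1$ and~$\nabla H^*(\nabla H(\xi))=\xi/H(\xi)$, valid as soon as~$H\in C^1$ and~$B_1^H$ is strictly convex. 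Substituting~$x=\Phi(y)$ turns~\eqref{FKweak} into~$\sgn\langle\Phi(\xi),y\rangle=\sgn\langle\xi,\Phi(y)\rangle$ for all~$\xi,y$, and a direct polar computation gives~$\langle\Phi(\xi),y\rangle=c\,\cos\!\big(\phi-\Theta(\theta)\big)$ for a positive~$c=c(\xi,y)$ (where~$\theta,\phi$ are the polar angles of~$\xi,y$), so that~\eqref{FKweak} becomes
\[
\sgn\cos\!\big(\phi-\Theta(\theta)\big)=\sgn\cos\!\big(\theta-\Theta(\phi)\big)\qquad\text{for all }\theta,\phi.
\]
Matching the zero sets of the two sides, together with~$\Theta(\cdot+\pi)=\Theta(\cdot)+\pi$ (the central symmetry forced by the evenness of~$H$), leaves the relation~$\Theta^{-1}(v)=\Theta(v+\pi/2)-\pi/2$ and a formally possible second branch~$\Theta^{-1}(v)=\Theta(v+\pi/2)+\pi/2$; the latter would force~$\Theta$ to be fixed-point free, which is impossible because~$\Theta(\theta)-\theta=-\arctan(\rho'(\theta)/\rho(\theta))$ must vanish at an extremum of the periodic function~$\rho$. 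Thus, for~$H$ as in the statement,~\eqref{FKweak} is \emph{equivalent} to:~$\Theta$ is an increasing circle homeomorphism of degree~$1$ with~$\Theta(\cdot+\pi)=\Theta(\cdot)+\pi$ and
\begin{equation*}
\Theta^{-1}(v)=\Theta\!\left(v+\frac{\pi}{2}\right)-\frac{\pi}{2}\qquad\text{for all }v.
\tag{$\ast$}
\end{equation*}
I will also use the identities~$\Theta'=(\rho^2+2\rho'^2-\rho\rho'')/(\rho^2+\rho'^2)$ and~$\text{(curvature of }\partial B_1^H)=(\rho^2+2\rho'^2-\rho\rho'')/(\rho^2+\rho'^2)^{3/2}$, which show that strict convexity of~$B_1^H$ is precisely inequality~\eqref{rcond1-i} (holding a.e.) for~$\rho$, while curvature bounded away from zero is~\eqref{rcond1-i} at every point.

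For the existence statement, given~$r$ on~$[0,\pi/2]$ satisfying~\eqref{rcond1-i}--\eqref{rcond2-i}, I would identify the map~$\tau$ of the statement as~$\tau=\tfrac\pi2+\Theta_r$, where~$\Theta_r$ is the Gauss map of~$r$; by~\eqref{rcond2-i} one has~$\Theta_r(0)=0$, $\Theta_r(\pi/2)=\pi/2$, and by~\eqref{rcond1-i} the map~$\tau:[0,\pi/2]\to[\pi/2,\pi]$ is an increasing bijection. Solving~$(\ast)$ for~$\Theta$ on~$[\pi/2,\pi]$ forces~$\Theta(\phi)=\tfrac\pi2+\Theta_r^{-1}(\phi-\tfrac\pi2)$; integrating the associated radial ODE~$\widetilde r'/\widetilde r=\tan(\phi-\Theta(\phi))$ with~$\widetilde r(\pi/2)=r^*$ produces, after the substitution~$\eta=\tau^{-1}(\phi)$ and the relation~$\tau'=(r^2+2r'^2-rr'')/(r^2+r'^2)$, exactly the formula of the statement: one checks that~$d\widetilde r/d\phi=-r^*\,r'(\eta)\sqrt{r(\eta)^2+r'(\eta)^2}/r(\eta)^3$ (the factor~$\tau'$ cancels, so this stays finite where~$\tau'$ vanishes), whence~$\widetilde r'/\widetilde r=-r'(\eta)/r(\eta)$ and~$\widetilde r(\pi/2)=r^*$. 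The resulting~$\Theta$, extended by~$\Theta(\cdot+\pi)=\Theta(\cdot)+\pi$, is a continuous strictly increasing degree-$1$ map fixing~$0,\tfrac\pi2,\pi,\tfrac{3\pi}2$, so the set~\eqref{Cdef-i} is strictly convex; and~$\widetilde r\in C^1(\R)$ because at~$\pi/2$ (and at~$0\equiv\pi$) both one-sided derivatives vanish, by~$r'(0)=r'(\pi/2)=0$. Finally, to recover~\eqref{FKweak} from~$(\ast)$ one observes that the two sides of the displayed sign identity vanish on the same set (both iff~$\theta=\Theta(\phi)\pm\pi/2$, using~$(\ast)$), and agree on each of the two components of the complement, since the fixed points~$0,\tfrac\pi2,\pi,\tfrac{3\pi}2$ give~$|\theta-\Theta(\Theta(\theta))|<\pi/2$, so that the ``section''~$\{(\theta,\Theta(\theta))\}$ lies in~$\{\cos(\theta-\Theta(\phi))>0\}$ and~$\{(\theta,\Theta(\theta)+\pi)\}$ in~$\{\cos(\theta-\Theta(\phi))<0\}$.

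For the converse, any even positive~$1$-homogeneous~$H\in C^2(\R^2\setminus\{0\})$ with strictly convex~$B_1^H$ satisfying~\eqref{FKweak} has a Gauss map~$\Theta$ obeying~$(\ast)$; such a~$\Theta$ has a fixed point~$\theta_0$ (a normal angle at an extremum of~$\rho$), and then~$\theta_0+\tfrac{k\pi}{2}$ are all fixed by~$(\ast)$. Rotating so that~$\theta_0=0$, composing if necessary with the rotation by~$\pi/2$ so that~$\rho(\pi/2)\ge\rho(0)$, and rescaling so that~$\rho(0)=1$, the function~$r:=\rho|_{[0,\pi/2]}$ satisfies~\eqref{rcond2-i} with~$r^*=\rho(\pi/2)\ge1$ and~\eqref{rcond1-i} a.e., while~$(\ast)$ forces~$\rho$ on the other three quadrants to be the function built above, so~$B_1^H$ has the form~\eqref{Cdef-i}. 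For the regularity refinement,~$H\in C^{3,\alpha}_\loc$ is the same as~$\widetilde r\in C^{3,\alpha}$, and uniform ellipticity is~$\Theta'$ bounded below by a positive constant, i.e.~\eqref{rcond1-i} at \emph{every} point of~$[0,\pi/2]$ (on~$[\pi/2,\pi]$, $\Theta'$ is the reciprocal of~$\Theta_r'$ at the matching point, automatically bounded below on the compact~$[0,\pi/2]$); then~$\tau^{-1}\in C^{2,\alpha}$ and the closed formula for~$\widetilde r$ on~$[\pi/2,\pi]$ is~$C^{3,\alpha}$, so the only further requirement is that this piece glue to class~$C^{3,\alpha}$ across~$\theta=\pi/2$. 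Since~$\widetilde r$ and~$\widetilde r'$ already match there by~\eqref{rcond2-i}, one differentiates the closed formula once more and twice more, evaluates at~$\eta=\tau^{-1}(\phi)\to0^+$ using~$r(0)=1$, $r'(0)=0$, and obtains~$\widetilde r''(\pi/2^+)=-r^*r''(0)/(1-r''(0))$ and~$\widetilde r'''(\pi/2^+)=-r^*r'''(0)/(1-r''(0))^3$; equating these with~$\widetilde r''(\pi/2^-)=r''(\pi/2)$ and~$\widetilde r'''(\pi/2^-)=r'''(\pi/2)$ yields exactly the two stated identities (the gluing at~$0\equiv\pi$ being automatic by~$\pi$-periodicity).

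The main obstacle I expect is twofold. Conceptually, the delicate step is the passage from~\eqref{FKweak} to~$(\ast)$: it must be extracted from the \emph{sign} identity, not merely from the vanishing, with care about central symmetry, and the spurious branch must be discarded. Computationally, the heart of the higher-regularity part is the differentiation of the composite~$\widetilde r=g\circ\tau^{-1}$ at the seam~$\theta=\pi/2$: one must track the cancellations that keep~$d\widetilde r/d\phi$, $d^2\widetilde r/d\phi^2$, $d^3\widetilde r/d\phi^3$ finite where~$\tau'$ may vanish, and produce the precise rational expressions in~$r''(0)$, $r'''(0)$. The remaining ingredients---the duality~$\Phi^{-1}=H^*\nabla H^*$, the polar formulas for curvature and~$\Theta'$, and the verification that the closed formula solves the radial ODE---are routine one-variable calculus and elementary convex geometry.
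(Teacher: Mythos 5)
Your proposal is correct and follows essentially the same route as the paper: your reduction of~\eqref{FKweak} to the functional equation~$(\ast)$ for the normal-angle map is exactly the paper's reformulation as the parallelism condition~\eqref{FKparall} (with your~$\tau=\tfrac\pi2+\Theta_r$ being the paper's~$\tau$), and the remaining steps --- integrating~$\widetilde r'/\widetilde r=-r'/r$ at matched angles to get the closed formula, checking~$C^1$ gluing and strict convexity via the polar curvature formula, and matching second and third derivatives at the seams for the~$C^{3,\alpha}$ case --- coincide with the paper's argument. No genuine gaps.
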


With this characterization, it is easy to construct examples
satisfying condition~\eqref{FKweak} whose corresponding ball
is not an Euclidean ellipsoid, see Remark~\ref{final}.

\subsection{A rigidity result}
As an application of Theorem~\ref{monformthm} we have the following Liouville-type result.

\begin{theorem} \label{liouthm}
Let~$H$ and~$u$ be as in Theorem~\ref{monformthm}. If
\begin{equation} \label{Gugrowth}
\int_{W_R} G(u(x)) \, dx = o(R^{n - 1}) \mbox{ as } R \rightarrow +\infty,
\end{equation}
then~$u$ is constant.

In particular, if~$G(u) \in L^1(\R^n)$, then~$u$ is constant.
\end{theorem}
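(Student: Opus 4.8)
The plan is to combine the monotonicity of the rescaled energy~$\E$ granted by Theorem~\ref{monformthm} with the pointwise gradient estimate of~\cite[Theorem~1.1]{CFV14} that underlies it. Concretely, the growth hypothesis~\eqref{Gugrowth} will force~$\E(R)\to 0$ as~$R\to+\infty$; since~$\E$ is non-decreasing this gives~$\E\equiv 0$, and then the non-negativity of the integrand in~\eqref{scaledWen} forces~$\nabla u\equiv 0$, hence~$u$ constant on the connected set~$\R^n$.

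The one substantial ingredient is a pointwise bound of the gradient term by the potential: namely, that a.e.\ in~$\R^n$ one has $B(H(\nabla u))\le C\,G(u)$ for some constant~$C>0$. To get it, I would first note that, since $u\in L^\infty(\R^n)$ and $G$ is continuous, $G(u)$ is bounded on~$\R^n$ (its range lies in the compact interval~$[u_*,u^*]$). The pointwise estimate of~\cite[Theorem~1.1]{CFV14}, which applies precisely in the setting of Theorem~\ref{monformthm} and which controls the Legendre-type quantity by~$G(u)$, then makes $H(\nabla u)$ bounded, say by~$M>0$, and yields a.e.
\[
B'(H(\nabla u))\,H(\nabla u)-B(H(\nabla u))\le G(u).
\]
It thus suffices to check that $t\mapsto B(t)\big/\big(B'(t)t-B(t)\big)$ is bounded on~$(0,M]$: this ratio is positive and continuous there, and it converges to a finite limit as~$t\to0^+$ because the structural hypotheses force $B$ to be power-like near the origin — assumption~(A) makes $B''(t)$ comparable to $(\kappa+t)^{p-2}$ on bounded ranges (test the Hessian bounds with $\zeta=\xi$, using $H(\xi)\asymp|\xi|$ and $H_{ij}(\xi)\xi_j=0$), while assumption~(B) forces $B''(0)>0$. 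Its supremum over~$(0,M]$ is then the desired~$C$.

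Granted this, the rest is short. Using the pointwise bound and~\eqref{Gugrowth}, for every~$R>0$
\[
R^{n-1}\,\E(R)=\int_{W_R}\Big(B(H(\nabla u))+G(u)\Big)\,dx\le(1+C)\int_{W_R}G(u)\,dx=o(R^{n-1})\quad\text{as }R\to+\infty,
\]
so $\E(R)\to 0$. Since $R\mapsto\E(R)$ is non-decreasing by Theorem~\ref{monformthm} and the integrand in~\eqref{scaledWen} is non-negative, $0\le\E(R)\le\lim_{\rho\to+\infty}\E(\rho)=0$ for every~$R>0$, i.e.\ $\E\equiv0$. Hence $\int_{W_R}\big(B(H(\nabla u))+G(u)\big)\,dx=0$ for all~$R$; both summands being non-negative, $B(H(\nabla u))=0$ a.e., so $H(\nabla u)=0$ a.e.\ by~\eqref{Bpos} and $B(0)=0$, so $\nabla u=0$ a.e.\ by~\eqref{Hpos}, and $u$ is constant. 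For the last assertion, if $G(u)\in L^1(\R^n)$ then $\int_{W_R}G(u)\,dx\le\|G(u)\|_{L^1(\R^n)}$ is bounded, hence $o(R^{n-1})$ as $n\ge 2$, so~\eqref{Gugrowth} holds and the first part applies.

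The main obstacle is the passage recorded in the second paragraph: turning the gradient estimate of~\cite{CFV14} — whose natural form bounds $B'(H(\nabla u))H(\nabla u)-B(H(\nabla u))$ rather than $B(H(\nabla u))$ itself — into the clean inequality $B(H(\nabla u))\le C\,G(u)$, which is exactly where the hypotheses~(A)/(B) on~$B$ and the $L^\infty$ bound on~$u$ are used. Everything after that is the elementary monotonicity argument above.
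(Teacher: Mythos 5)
Your proposal is correct and follows essentially the same route as the paper: you isolate the key lemma (boundedness of $B(t)/(B'(t)t-B(t))$ on the relevant range, which the paper states equivalently as $B'(t)t-B(t)\ge\delta B(t)$ in its Lemma~\ref{B'tBgeBlem}), combine it with the pointwise estimate of~\cite[Theorem~1.1]{CFV14} to obtain $B(H(\nabla u))\le C\,G(u)$, and then run the monotonicity argument to force $\E\equiv 0$ and hence $\nabla u\equiv 0$. The only minor imprecision is the claim that the ratio ``converges to a finite limit'' as $t\to0^+$ under~(A): it is merely bounded (between $\bar\gamma/\bar\Gamma$ and $\bar\Gamma/\bar\gamma$) rather than convergent in general, and the global $L^\infty$ bound on $\nabla u$ needed under hypothesis~(ii) is supplied directly by the assumption $u\in W^{1,\infty}(\R^n)$ (and by Proposition~\ref{ureg} under~(i)), not by the gradient estimate itself; neither affects the substance of the argument.
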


We remark that Theorem~\ref{liouthm} is a sort of rigidity result.
The condition that~$G(u)$ has finite mass - or, more generally, that the mass has controlled growth - may be seen
as a prescription of the values of the solution at infinity
(at least, in a suitably averaged sense): 
the result of Theorem~\ref{liouthm} gives that the only solution 
that can satisfy such prescription is the trivial one.
In this spirit, Theorem~\ref{liouthm} may be seen as a variant
of the classical Liouville Theorem for harmonic functions
(set here in a nonlinear, anisotropic, singular or degenerate framework).
\medskip

\subsection{Equivalent conditions}\label{XX2}
We remark that the assumptions in~(A) and~(B)
that we made on the anisotropic and nonlinear part of the operator
are somehow classical in the literature, see e.g.~\cite{CGS94, CFV14}
and the references therein (roughly speaking, these conditions
are the necessary ones to obtain some regularity of the solutions
using, or adapting, the elliptic regularity theory).

In spite of their classical flavour, we think that in some cases these conditions
can be made more explicit or more concrete. For this, in
this paper we provide some equivalent characterizations. 
In particular, we will observe that condition~(B) puts
some important restrictions on the structure of the ambient medium, due to
the regularity requirement on the composition~$B \circ H$.
More precisely, the following result holds true:

\begin{theorem}\label{T-RES} 
Assumption~(A) is equivalent to 
\begin{enumerate}[(A)\ensuremath{'}] 
\item There exist~$p > 1$,~$\bar{\kappa} \in [0, 1)$ and 
positive~$\bar{\gamma}, \bar{\Gamma}, 
\lambda$ such that 
$$ H \mbox{ is uniformly elliptic with 
constant~$\lambda$}, $$ 
and 
\begin{align*} \bar{\gamma} (\bar{\kappa} + 
t)^{p - 2} t \le & B'(t) \le \bar{\Gamma} (\bar{\kappa} + t)^{p - 2} t, 
\\ \bar{\gamma} (\bar{\kappa} + t)^{p - 2} \le & B''(t) \le \bar{\Gamma} 
(\bar{\kappa} + t)^{p - 2}, \end{align*} 
for any~$t > 0$. 
\end{enumerate} 
Assumption~(B) is equivalent to 
\begin{enumerate}[(B)\ensuremath{'}]
\item The function~$B$ is of class~$C^{3, \beta}_\loc([0, +\infty))$, with~$B'''(0) = 0$,
\begin{equation} \label{B''0pos}
B''(0) > 0,
\end{equation}
and~$H$ is of the type~\eqref{HM}, for some~$M \in \Mat_n(\R)$ symmetric and positive definite.
\end{enumerate}
\end{theorem}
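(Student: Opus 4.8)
The plan is to establish the two equivalences separately. Both rely on the chain-rule identity, valid for $\xi \in \R^n \setminus \{ 0 \}$,
\begin{equation*}
\left[ \Hess \,(B \circ H)(\xi) \right]_{i j} = B''(H(\xi))\, H_i(\xi) H_j(\xi) + B'(H(\xi))\, H_{i j}(\xi),
\end{equation*}
together with the homogeneity relations $\langle \nabla H(\xi), \xi \rangle = H(\xi)$ and $\Hess H(\xi)\, \xi = 0$ and the fact that $\nabla H$ and $\Hess H$ are respectively $0$- and $(-1)$-homogeneous; by compactness of $S^{n - 1}$ the latter also give $H(\xi) \asymp |\xi|$, $|\nabla H(\xi)| \le C$ and $\sum_{i, j} |H_{i j}(\xi)| \le C |\xi|^{- 1}$.

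For the equivalence of (A) and (A)$'$, the key point is that, since $\langle \nabla H(\xi), \xi \rangle = H(\xi) > 0$, every $\zeta \in \R^n$ splits uniquely as $\zeta = s \xi + \eta$ with $\eta \in \nabla H(\xi)^\perp$, and that in this (generally non-orthogonal) decomposition the quadratic form becomes diagonal:
\begin{equation*}
\left[ \Hess \,(B \circ H)(\xi) \right]_{i j} \zeta_i \zeta_j = B''(H(\xi))\, H(\xi)^2 s^2 + B'(H(\xi))\, H_{i j}(\xi) \eta_i \eta_j .
\end{equation*}
Assuming (A)$'$, one inserts the two-sided bounds on $B'$ and $B''$, the uniform ellipticity~\eqref{Huniell2}, and the estimates above into this identity, obtaining both inequalities of (A) after absorbing constants (replacing $\bar\kappa$ by a comparable $\kappa \in [0, 1)$ if needed, using $H(\xi) \asymp |\xi|$). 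Conversely, from (A) I would test the two inequalities with the choices $\zeta = \xi$ (which annihilates the $\eta$-term) and $\zeta = \eta \in \nabla H(\xi)^\perp$ (which annihilates the $s$-term), evaluated at $\xi = t \nu$ for a fixed unit vector $\nu$: the first gives $B''(t) \asymp (\kappa + t)^{p - 2}$ after rescaling, while the second, once one observes that its left-hand side is independent of $\eta$, simultaneously pins the Rayleigh quotient of $\Hess H(\nu)$ on $\nabla H(\nu)^\perp$ between two positive constants — which, by compactness of $S^{n - 1}$ and homogeneity, is exactly the uniform ellipticity of $H$ — and yields $B'(t) \asymp t (\kappa + t)^{p - 2}$. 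This is (A)$'$.

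The equivalence of (B) and (B)$'$ is where genuine rigidity enters. For (B)$'$$\Rightarrow$(B), write $B(t) = \tfrac12 B''(0)\, t^2 + \psi(t)$, so that $\psi \in C^{3, \beta}_\loc([0, +\infty))$ with $\psi(0) = \psi'(0) = \psi''(0) = \psi'''(0) = 0$ — this is where $B'''(0) = 0$ enters — whence $|\psi^{(k)}(t)| \le C \, t^{3 + \beta - k}$ for $k = 0, \dots, 3$. Then $B \circ H_M = \tfrac12 B''(0)\, \langle M \cdot, \cdot \rangle + \psi \circ H_M$: the first summand is a polynomial, and differentiating $\psi \circ H_M$ and estimating each resulting term by $|\partial^m H_M(\xi)| \le C \, |\xi|^{1 - m}$ shows that all derivatives of $\psi \circ H_M$ of order $\le 2$ vanish at $0$ while the third-order ones are $O(|\xi|^\beta)$ and H\"older continuous there, so $B \circ H_M \in C^{3, \beta}_\loc(\R^n)$; the ellipticity lower bound in (B) then follows from $\Hess \,(B \circ H_M)(0) = B''(0) M > 0$ near $0$ and from the diagonal form above (with $B', B'' > 0$ and $H_M$ uniformly elliptic) on compact subsets avoiding the origin. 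For the converse (B)$\Rightarrow$(B)$'$, I would use only that $B \circ H$ is $C^2$ at $0$, where it attains a minimum, so that $\nabla (B \circ H)(0) = 0$ and $A := \Hess \,(B \circ H)(0)$ is positive definite — the last being the $\xi = 0$ instance of the hypothesis in (B) — and compare the second-order expansion of $B \circ H$ at $0$ along $t \mapsto t \xi$ with the exact relation $(B \circ H)(t \xi) = B(t H(\xi))$, which gives $\lim_{s \to 0^+} B(s)/s^2 = \tfrac12 \langle A \xi, \xi \rangle / H(\xi)^2$ for every $\xi \neq 0$. Since the left-hand side does not depend on $\xi$, the ratio $\langle A \xi, \xi \rangle / H(\xi)^2$ is a positive constant $\mu$, hence $H(\xi)^2 = \langle M \xi, \xi \rangle$ with $M := \mu^{-1} A$ symmetric and positive definite, i.e. $H = H_M$, and $B''(0) = \mu > 0$. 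Finally, after the linear change $\eta = M^{1/2} \xi$ the function $\eta \mapsto (B \circ H)(M^{-1/2} \eta) = B(|\eta|)$ lies in $C^{3, \beta}_\loc(\R^n)$; restricting it to a line through $0$ shows that the even function $t \mapsto B(|t|)$ is of class $C^{3, \beta}$ near $0$, which forces $B \in C^{3, \beta}_\loc([0, +\infty))$ with its odd-order derivatives at the origin — in particular $B'(0)$ and $B'''(0)$ — vanishing, which is (B)$'$.

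I expect the main obstacle to be the forward implication (B)$\Rightarrow$(B)$'$: the delicate step is to recognise that the mere $C^2$-regularity of $B \circ H$ at the single point where $H$ is not differentiable already rigidly forces $H^2$ to be a non-degenerate positive definite quadratic form. Once that is in hand, upgrading to the full $C^{3, \beta}$ statement on $B$ is the standard fact that a radial $C^{3, \beta}$ function on $\R^n$ restricts to an even $C^{3, \beta}$ function of one variable, and the equivalence of (A) and (A)$'$ amounts to bookkeeping with homogeneity and the compactness of the unit sphere.
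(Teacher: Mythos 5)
Your proposal is correct. For the equivalence (A)$\Leftrightarrow$(A)$'$ you follow essentially the paper's route (Proposition~\ref{(A)char}): the decomposition $\zeta = s\xi + \eta$ with $\eta \in \nabla H(\xi)^\perp$, which diagonalises the quadratic form thanks to \eqref{i}--\eqref{ii}, and the test directions $\zeta = \xi$ and $\zeta \perp \nabla H(\xi)$. The one place you are glib is the extraction of the \emph{upper} bounds on $B'$ and $B''$ from the growth condition of (A): since that condition controls $\sum_{i,j}|B''H_iH_j + B'H_{ij}|$, you must decouple the two terms by evaluating at special points (the paper takes $\xi = \mu e_1$, where $H_{11}(e_1)=0$, for the $B''$ bound, and $\xi = t\nabla H^*(e_2)$, so that $H_1(\xi)=0$, for the $B'$ bound), and the upper bound on $B'$ needs the uniform ellipticity of $H$ as an input, so the order of the deductions matters; also, your parenthetical ``once one observes that its left-hand side is independent of $\eta$'' is not literally true --- what you are using is the product structure $B'(H(\xi))\cdot H_{ij}(\xi)\eta_i\eta_j$, calibrated at $t=1$ to separate the two factors. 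These are repairable details, not gaps. For (B)$\Leftrightarrow$(B)$'$ your route is genuinely different and more self-contained than the paper's. In the direction (B)$\Rightarrow$(B)$'$ the paper \emph{cites} \cite{CFV14} for the rigidity $H = H_M$ and then recovers $B''(0)$ and $B'''(0)=0$ by explicit difference quotients of $(B\circ H)_{11}$ along $te_1$; you instead derive the rigidity directly from the second-order Taylor expansion of $B\circ H$ at its minimum $0$, observing that $B(s)/s^2 \to \tfrac12\langle A\xi,\xi\rangle/H(\xi)^2$ must be independent of $\xi$, and then obtain the full $C^{3,\beta}$ regularity of $B$ (with vanishing odd derivatives at $0$) by the linear substitution $\eta = M^{1/2}\xi$ reducing $B\circ H$ to the radial function $B(|\eta|)$. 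This buys a proof that does not rely on the external appendix and makes the mechanism of the rigidity transparent. Conversely, in (B)$'\Rightarrow$(B) your splitting $B = \tfrac12 B''(0)t^2 + \psi$ with $\psi$ vanishing to order three replaces the paper's limit computations \eqref{BH20}--\eqref{limit0} by a single Taylor estimate $|\psi^{(k)}(t)|\le Ct^{3+\beta-k}$; both arguments leave the final H\"older-gluing at the origin to the reader, so the level of rigour is comparable.
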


\subsection{Organization of the paper}
The rest of the paper is organized as follows.

In Section~\ref{auxsec} we gather several auxiliary lemmata, most of which are related to basic properties of the anisotropy~$H$. At the end of the section we also briefly comment on the regularity of the solutions of~\eqref{eleq}.

In Section~\ref{ABsec} we establish the equivalence of the two sets of conditions~(A)-(B) and~(A)\ensuremath{'}-(B)\ensuremath{'}, thus proving Theorem~\ref{T-RES}.

The proof of the main result of the paper, Theorem~\ref{monformthm}, is the content of Section~\ref{monformsec}. In the subsequent Section~\ref{liousec} we then deduce Theorem~\ref{liouthm} as a corollary of the monotonicity formula.

The last two sections deal with the characterizations of conditions~\eqref{FK} and~\eqref{FKweak}. In Section~\ref{char1sec} we address Theorem~\ref{FKcharprop}, while the following Section~\ref{char2sec} is devoted to the proof of Proposition~\ref{2Dprop-i}.

\section{Some auxiliary results} \label{auxsec}

We collect here some preliminary results which will be abundantly used in the forthcoming sections. Most of them are very well known results, so that we will not comment much on their proofs. Nevertheless, precise references will be given.

Every result in this section is clearly meant to be applied to the functions~$H$ and~$B$ above introduced. However, when possible we state them under slightly lighter hypotheses.

The first lemma provides three useful identities for the derivatives of positive homogeneous functions. We recall that, given~$d \in \R$, a function~$H: \R^n \setminus \{ 0 \} \to \R$ is said to be positive homogeneous of degree~$d$ if
$$
H(t \xi) = |t|^d H(\xi) \mbox{ for any } t > 0, \, \xi \in \R^n \setminus \{ 0 \}.
$$

\begin{lemma} \label{homain}
If~$H\in C^3(\R^n\setminus \{0\})$ is positive homogeneous of
degree~$1$, we have that
\begin{align}
\label{i} H_i(\xi) \xi_i & = H(\xi),\\
\label{ii} H_{ij}(\xi) \xi_i & = 0,\\
\label{iii} H_{ijk}(\xi) \xi_i & = -H_{jk}(\xi).
\end{align}
\end{lemma}

We refer to the Appendix of~\cite{FV14} for a proof. The second result of this section deals with the regularity up to the origin of both the anisotropy~$H$ and the composition~$B \circ H$.

\begin{lemma} \label{derBH0}
Let~$H\in C^1(\R^n\setminus\{0\})$ be a positive homogeneous function of degree $d$ admitting non-negative values and~$B \in C^1([0, +\infty))$, with~$B(0) = 0$. Assume that either $d > 1$ or $d = 1$ and $B'(0) = 0$. Then~$H$ can be extended by setting~$H(0):=0$ to a continuous function, such that~$B \circ H \in C^1(\R^n)$ and
$$
\partial_i (B \circ H)(0) = 0 = \lim_{x \rightarrow 0} B'(H(x)) H_i(x).
$$
\end{lemma}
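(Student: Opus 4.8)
\textbf{Proof plan for Lemma~\ref{derBH0}.}
The plan is to reduce everything to estimating the quantity $B'(H(x)) H_i(x)$ near the origin and showing it tends to $0$, since this expression is exactly the candidate for the partial derivative $\partial_i(B\circ H)$ away from $0$. First I would record the homogeneity bookkeeping: writing $x = |x|\omega$ with $\omega \in S^{n-1}$, we have $H(x) = |x|^d H(\omega)$, and differentiating the homogeneity relation $H(tx) = t^d H(x)$ in $x_i$ gives that $H_i$ is positive homogeneous of degree $d-1$, so $H_i(x) = |x|^{d-1} H_i(\omega)$. Since $H \in C^1(\R^n\setminus\{0\})$ and $S^{n-1}$ is compact, both $|H(\omega)|$ and $|H_i(\omega)|$ are bounded by a constant $C$ on the sphere; also $H \ge 0$ by hypothesis. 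Hence $0 \le H(x) \le C|x|^d \to 0$ as $x \to 0$, which shows the claimed continuous extension $H(0) := 0$ is legitimate and that $H(x) \to 0$.

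Next I would handle the two regimes for $d$ uniformly. For the product $B'(H(x))H_i(x)$, note $|B'(H(x))H_i(x)| \le |B'(H(x))| \cdot C|x|^{d-1}$. If $d > 1$ then $|x|^{d-1} \to 0$ while $B'(H(x)) \to B'(0)$ by continuity of $B'$ and $H(x)\to 0$, so the product vanishes. If $d = 1$ then $|x|^{d-1} = 1$, but now the hypothesis $B'(0) = 0$ together with continuity of $B'$ on $[0,+\infty)$ gives $B'(H(x)) \to B'(0) = 0$, so again the product vanishes. This establishes $\lim_{x\to 0} B'(H(x))H_i(x) = 0$.

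It remains to upgrade this to differentiability of $B\circ H$ at the origin with vanishing gradient, and to $C^1$ regularity on all of $\R^n$. For differentiability at $0$: since $B(0) = 0$ and the extended $H(0) = 0$, the difference quotient is $\frac{(B\circ H)(h) - (B\circ H)(0)}{|h|} = \frac{B(H(h))}{|h|}$; using $B(0)=0$ and the mean value theorem, $B(H(h)) = B'(\xi_h) H(h)$ for some $\xi_h$ between $0$ and $H(h)$, and $H(h) = o(|h|)$ when $d>1$ while $B'(\xi_h)\to 0$ when $d=1$, so in either case $B(H(h)) = o(|h|)$ and the gradient at $0$ is $0$. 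Away from $0$, $B\circ H$ is $C^1$ as a composition of $C^1$ maps (with $H$ valued in $[0,+\infty)$ where $B$ is $C^1$), and its partial derivatives there are $B'(H(x))H_i(x)$. Finally, continuity of $\nabla(B\circ H)$ at $0$ is precisely the limit relation just proved, so $B\circ H \in C^1(\R^n)$. The only mildly delicate point—and the one I would be most careful about—is treating $d=1$ and $d>1$ simultaneously without circular reasoning; the clean way is to always keep the product $B'(H(x))\cdot|x|^{d-1}$ intact and observe that in each case one of the two factors supplies the decay, rather than trying to estimate $B'(H(x))$ and $|x|^{d-1}$ by a single uniform bound.
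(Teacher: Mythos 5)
Your proof is correct, and the approach is the natural one. Note that the paper under review does not actually supply its own argument for this lemma; it defers to~\cite[Section~2]{CFV14}. The argument in that reference proceeds along the same lines as yours: use homogeneity to estimate $H(x) = O(|x|^d)$ and $H_i(x) = O(|x|^{d-1})$ via compactness of the unit sphere, show that $B'(H(x))H_i(x)\to 0$ by splitting into the two regimes for $d$, and obtain differentiability at the origin by a mean-value / difference-quotient argument. Your version is complete and careful, including the potentially overlooked point that when $d=1$ one must keep the product together so that the decay comes from $B'(H(x))\to B'(0)=0$ rather than from $|x|^{d-1}$. One small remark: since the lemma only assumes $H\ge 0$ (not $H>0$ away from the origin), there could in principle be nonzero points $x_0$ with $H(x_0)=0$; there the chain rule and the $C^1$ property still hold because $x_0$ is then a minimum of $H$ on the open set $\R^n\setminus\{0\}$, so $\nabla H(x_0)=0$ and the same mean-value argument shows $(B\circ H)$ is differentiable there with vanishing gradient. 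This edge case is implicit in your ``composition of $C^1$ maps'' step and is worth spelling out if $H$ is not assumed strictly positive off the origin.
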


A proof of Lemma~\ref{derBH0} can be found in~\cite[Section~2]{CFV14}. Next is a lemma which gathers some results on~$H$ and its dual~$H^*$.

\begin{lemma} \label{H*reg}
Let~$B \in C^2((0, +\infty))$ and~$H \in C^2(\R^n \setminus \{ 0 \})$. Assume~$B$ to satisfy~\eqref{Bpos}, the function~$H$ to be positive homogeneous of degree~$1$ satisfying~\eqref{Hpos} and~$\Hess(B \circ H)$ to be positive definite in~$\R^n \setminus \{ 0 \}$. Then, the ball~$B_1^H$ defined by~\eqref{Hball} is strictly convex.\\
Furthermore, the dual function~$H^*$ defined by~\eqref{H*def} is of class~$C^2(\R^n \setminus \{ 0 \})$, the formulae
\begin{equation} \label{CS2}
H^*(\nabla H(\xi)) = H(\nabla H^*(\xi)) = 1,
\end{equation}
hold true for any~$\xi \in \R^n \setminus \{ 0 \}$ and the map~$\Psi_H: \R^n \to \R^n$, defined by setting
$$
\Psi_H(\xi) := H(\xi) \nabla H(\xi),
$$
for any~$\xi \in \R^n$, is a global homeomorphism of~$\R^n$, with inverse~$\Psi_{H^*}$.
\end{lemma}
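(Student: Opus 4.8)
**Proof plan for Lemma~\ref{H*reg}.**

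The plan is to proceed in four stages: strict convexity of $B_1^H$, the regularity of $H^*$, the duality formulae~\eqref{CS2}, and finally the homeomorphism property of $\Psi_H$.

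\emph{Strict convexity of $B_1^H$.} Since $B$ is increasing by~\eqref{Bpos}, the sublevel set $B_1^H=\{H<1\}$ coincides with $\{B\circ H<B(1)\}$. As $\Hess(B\circ H)$ is positive definite on $\R^n\setminus\{0\}$, the function $B\circ H$ is strictly convex there; combined with positive homogeneity of $H$ (hence quasi-convexity of $B\circ H$ across the origin), the sublevel set is strictly convex. Concretely, I would take two distinct boundary points $\xi_0,\xi_1\in\partial B_1^H$ and the segment $\xi_t=(1-t)\xi_0+t\xi_1$; if both lay on $\partial B_1^H$, then $B\circ H$ would be constant equal to $B(1)$ along the segment, contradicting strict convexity of $B\circ H$ on the segment (which avoids the origin, since $0\notin\partial B_1^H$ and the segment stays in the convex hull of two points at $H$-distance $1$). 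Hence the interior of the segment lies in the open ball, giving strict convexity.

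\emph{Regularity of $H^*$ and the duality formulae.} This is the classical Legendre-type duality between a norm-like function and its polar, and the natural tool is the inverse function theorem applied to $\Psi_H=H\nabla H$. First note $\Psi_H$ is $1$-homogeneous (by Euler's identity~\eqref{i}, $\nabla H$ is $0$-homogeneous, so $H\nabla H$ is $1$-homogeneous) and of class $C^1$ on $\R^n\setminus\{0\}$. Its differential is $D\Psi_H(\xi)=\nabla H(\xi)\otimes\nabla H(\xi)+H(\xi)\Hess H(\xi)=\Hess(B\circ H)(\xi)$ when $B(t)=t^2/2$; in general one checks directly that the relevant Jacobian $\nabla H\otimes\nabla H+H\,\Hess H$ is positive definite — indeed for $\zeta\in\R^n$ write $\zeta=a\,\xi/|\xi|+\zeta^\perp$ with $\zeta^\perp\in\nabla H(\xi)^\perp$ suitably, use~\eqref{ii} ($\Hess H(\xi)\,\xi=0$) and the hypothesis that $\Hess(B\circ H)$ is positive definite together with~\eqref{Bpos} to conclude the quadratic form $(\nabla H(\xi)\cdot\zeta)^2+H(\xi)\,\Hess H(\xi)[\zeta,\zeta]$ is strictly positive for $\zeta\neq0$. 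Hence $\Psi_H$ is a local $C^1$-diffeomorphism of $\R^n\setminus\{0\}$. A standard computation gives $H^*\circ\Psi_H=1$: from the definition~\eqref{H*def}, $H^*(x)=\sup_{|\xi|=1}\langle x,\xi\rangle/H(\xi)$, and the supremum in the unconstrained problem $\sup_\xi\langle x,\xi\rangle/H(\xi)$ is attained, by $0$-homogeneity of the ratio and Lagrange multipliers, precisely where $x$ is parallel to $\nabla H(\xi)$, i.e. at $x=H^*(x)\nabla H(\xi_0)$ for the optimal direction, which after rescaling reads $x=\Psi_H(\xi)$ with $H^*(x)=1$. Writing $H^*$ as a composition with the local inverse of $\Psi_H$ yields $H^*\in C^2(\R^n\setminus\{0\})$ (the inverse of a $C^1$ map with $C^1$, invertible differential obtained from a $C^2$ function $H$ is $C^2$; more carefully, $\Psi_H\in C^1$ with $D\Psi_H$ itself $C^0$, and since $H\in C^2$ one gets $H^*\in C^{1,1}_{\mathrm{loc}}$, and a second differentiation of the identity $H(\nabla H^*)=1$ bootstraps to $C^2$). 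The two identities in~\eqref{CS2}, $H^*(\nabla H(\xi))=H(\nabla H^*(\xi))=1$, then follow from $H^*\circ\Psi_H=\text{const}$ after accounting for homogeneity, and symmetrically for $H^*$ (whose polar is $H$, by the bipolar theorem for the strictly convex body $B_1^H$).

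\emph{The homeomorphism property.} Once $\Psi_H$ is known to be a $1$-homogeneous local diffeomorphism of $\R^n\setminus\{0\}$, I would show it is a bijection by exhibiting $\Psi_{H^*}$ as a two-sided inverse. From~\eqref{CS2}, $H^*(\nabla H(\xi))=1$; differentiating $H(\nabla H^*(y))=1$ and using $1$-homogeneity, one gets $\nabla H^*(\Psi_H(\xi))=\xi/H(\xi)$, whence $\Psi_{H^*}(\Psi_H(\xi))=H^*(\Psi_H(\xi))\nabla H^*(\Psi_H(\xi))=H(\xi)\cdot\xi/H(\xi)=\xi$ (using $H^*(H(\xi)\nabla H(\xi))=H(\xi)$ by $1$-homogeneity of $H^*$ and~\eqref{CS2}). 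The symmetric computation gives $\Psi_H\circ\Psi_{H^*}=\mathrm{id}$. Both maps extend continuously to $0\mapsto0$ by Lemma~\ref{derBH0} (applied with $B(t)=t^2/2$, which vanishes with its derivative at $0$), so $\Psi_H$ is a homeomorphism of all of $\R^n$ with inverse $\Psi_{H^*}$.

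\emph{Main obstacle.} The delicate point is establishing the exact regularity of $H^*$, i.e. showing it is genuinely $C^2$ and not merely $C^{1,1}$: this requires knowing that $D\Psi_H=\nabla H\otimes\nabla H+H\,\Hess H$ is everywhere invertible on $\R^n\setminus\{0\}$, which I would extract from the hypothesis that $\Hess(B\circ H)$ is positive definite by the splitting argument above (decompose test vectors along $\xi$ and along $\nabla H(\xi)^\perp$, and invoke~\eqref{ii} to kill the $\Hess H$ term on the first component while~\eqref{Bpos} controls the radial term). Everything else is standard convex-duality bookkeeping, and most of it can be cited from~\cite[Appendix~A]{CFV14} or~\cite{FK09}.
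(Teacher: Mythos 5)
Your argument is correct, and for the first assertion (strict convexity of $B_1^H$ as a sublevel set of the strictly convex function $B\circ H$, using that $B'>0$) it coincides with the paper's proof. For the remaining claims the paper does not argue directly but simply invokes~\cite[Lemma~3.1]{CS09} (noting only that the evenness assumption there is not used), whereas you reconstruct the underlying duality argument: the key step, which you identify correctly, is that $D\Psi_H=\nabla H\otimes\nabla H+H\,\Hess H$ is positive definite on $\R^n\setminus\{0\}$, which follows from the decomposition $\zeta=\alpha\xi+\eta$ with $\eta\in\nabla H(\xi)^\perp$ together with~\eqref{i},~\eqref{ii} and~\eqref{Bpos} --- the same splitting the paper uses elsewhere (cf.\ the proof of Proposition~\ref{(A)char}). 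Your bootstrap for the $C^2$ regularity of $H^*$ (namely $\Psi_{H^*}=\nabla\bigl((H^*)^2/2\bigr)$ is the $C^1$ inverse of $\Psi_H$, hence $(H^*)^2\in C^2$ away from the origin) is in fact more careful than the paper's one-line justification. The only slightly roundabout step is deriving $H(\nabla H^*)=1$ via the bipolar theorem; it is quicker to note that the maximizer $\xi_0(x)\in\partial B_1^H$ in~\eqref{H*def} satisfies $\nabla H^*(x)=\xi_0(x)$ by the envelope theorem, so $H(\nabla H^*(x))=1$ directly. In short: correct, with the same strategy, just self-contained where the paper cites.
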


\begin{proof}
Notice that~$B \circ H \in C^2(\R^n \setminus \{ 0 \}) \cap C^0(\R^n)$ and its Hessian is positive definite in~$\R^n \setminus \{ 0 \}$. Hence~$B \circ H$ is strictly convex in the whole of~$\R^n$. Moreover, being~$B'$ positive by~\eqref{Bpos}, the ball~$B_1^H$ is also a sublevel set of~$B \circ H$ and thus strictly convex.

The other claims are valid by virtue of~\cite[Lemma~3.1]{CS09}. Note that~$H$ is assumed to be even in~\cite{CS09}, 
but this assumption is not used in the proof of Lemma~3.1 there. Hence
this result is valid also in our setting. 

Moreover,
$H^*$ is of class~$C^2$ outside of the origin,
since so is
the diffeomorphism~$\Psi_H$.
\end{proof}

%

Next we see that if~$B$ is of the type of the regularized~$p$-Laplacian, i.e. when~(A)\ensuremath{'} holds with~$\bar{\kappa} > 0$, then it is close to being quadratic. In particular, we show that~$B$ can modified far from the origin to make it satisfy~(A)\ensuremath{'} with~$p = 2$. We will need such a trick in Section~\ref{monformsec} in order to overcome a technical difficulty along the proof of Proposition~\ref{uepsexis}.

\begin{lemma} \label{Bcaplem}
Let~$B \in C^2((0, +\infty)) \cap C^1([0, +\infty))$ be a function satisfying both~\eqref{Bpos} and~$B(0) = B'(0) = 0$. Assume in addition that~$B$ satisfies the inequalities displayed in~(A)\ensuremath{'} for some~$p > 1$ and~$\bar{\kappa} > 0$. Let~$M > 0$ be fixed and define
\begin{equation} \label{Bcapdef}
\hat{B}(t) := \begin{cases}
B(t), & \qquad \mbox{if } t \in [0, M), \\
a (t - M)^2 + b (t - M) + c, & \qquad \mbox{if } t \ge M,
\end{cases}
\end{equation}
where~$a = B''(M) / 2$,~$b = B'(M)$ and~$c = B(M)$. Then,~$\hat{B} \in C^2((0, +\infty)) \cap C^1([0, +\infty))$ and it satisfies the inequalities in~(A)\ensuremath{'} with~$p = 2$.
\end{lemma}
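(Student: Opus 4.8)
The plan is to verify the two assertions separately: first the regularity of $\hat{B}$, then the validity of the inequalities in~(A)$'$ with exponent $p = 2$. The regularity claim is a matter of matching the function and its first two derivatives at $t = M$. With the choices $a = B''(M)/2$, $b = B'(M)$, $c = B(M)$, one has $\hat{B}(M^-) = \hat{B}(M^+) = c$, $\hat{B}'(M^-) = \hat{B}'(M^+) = b$, and $\hat{B}''(M^-) = \hat{B}''(M^+) = 2a = B''(M)$; since $B$ is $C^2$ on $(0, +\infty)$ and the quadratic piece is smooth, $\hat{B} \in C^2((0, +\infty))$, and on $[0, M)$ it coincides with $B \in C^1([0, +\infty))$ with $\hat{B}(0) = \hat{B}'(0) = 0$, so $\hat{B} \in C^1([0, +\infty))$.

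For the inequalities, I would split the range $t > 0$ into $(0, M)$ and $[M, +\infty)$. On $(0, M)$, $\hat{B}$ agrees with $B$, which satisfies the inequalities in~(A)$'$ with the original exponent $p$ and original $\bar{\kappa} > 0$; since $t$ is bounded above by $M$, the weights $(\bar\kappa + t)^{p-2}$ and $(\bar\kappa + t)^{p-2} t$ are comparable (up to constants depending on $p$, $\bar\kappa$, $M$) to the weights $1$ and $t$ appearing in the $p = 2$ version, so the inequalities in~(A)$'$ with $p=2$ hold on $(0, M)$ after adjusting the constants $\bar\gamma$, $\bar\Gamma$. On $[M, +\infty)$ we compute directly: $\hat{B}''(t) = 2a = B''(M)$ is a positive constant, and $\bar\gamma_0 \le B''(M) \le \bar\Gamma_0$ for suitable constants (using that $B''(M)$ is comparable to $(\bar\kappa + M)^{p-2}$ by~(A)$'$ applied to $B$ at $t = M$), which gives the $B''$ bounds in the $p=2$ form, since the weight there is simply $1$. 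For $\hat{B}'(t) = 2a(t - M) + b$ one checks $\hat B'(t) \le \hat B''(t)\, t = 2at$ for $t \ge M$ when $b \le 2aM$, i.e. $B'(M) \le B''(M) M$, which is a general consequence of the bounds in~(A)$'$ at the common endpoint; and a matching lower bound $\hat B'(t) \ge \bar\gamma t$ follows because $b > 0$ and $2a > 0$, so $\hat B'(t) \ge b + 2a(t-M) \ge \min(b/M, 2a)\, t$ for $t \ge M$ — here again comparability of $b$ and $2aM$ with $B'(M)$, $B''(M)M$ is used.

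I expect the only mildly delicate point to be bookkeeping the constants so that a single pair $\bar\gamma, \bar\Gamma$ works uniformly over both pieces; this is routine since all the relevant quantities at $t = M$ are pinned between $\bar\gamma(\bar\kappa + M)^{p-2}$-type expressions by hypothesis, and on the bounded piece $(0,M)$ the change of weight costs only a constant depending on $p$, $\bar\kappa$ and $M$. No substantive obstacle arises: the lemma is essentially the observation that replacing $B$ by its second-order Taylor polynomial beyond $M$ preserves the structural bounds while forcing $B''$ to become bounded and bounded away from zero, i.e. the $p = 2$ regime.
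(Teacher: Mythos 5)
Your proof is correct and follows essentially the paper's argument: the regularity is a matter of matching value, first and second derivative at $t=M$, and the substance is the two-sided constant bound on $\hat{B}''$, which is obvious on $[M,+\infty)$ and follows on $(0,M)$ from (A)$'$ together with $\bar{\kappa}>0$ (so that $(\bar{\kappa}+t)^{p-2}$ is pinched between $\min\{\bar{\kappa}^{p-2},(\bar{\kappa}+M)^{p-2}\}$ and the corresponding max). The one difference is that the paper obtains the $\hat{B}'$ inequalities simply by integrating the $\hat{B}''$ bounds from $0$ (using $\hat{B}'(0)=0$), which avoids your separate case analysis on $[M,+\infty)$ and the slightly overstated claim that $B'(M)\le B''(M)M$ is a consequence of (A)$'$ — it only holds up to the factor $\bar{\Gamma}/\bar{\gamma}$, which is harmless for your argument but worth stating accurately.
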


\begin{proof}
The function~$\hat{B}$ is of class~$C^2((0, +\infty)) \cap C^1([0, +\infty))$ by construction. Moreover, the estimates concerning~$\hat{B}'$ in~(A)\ensuremath{'} result from the analogous for~$\hat{B}''$ by integration, since~$\hat{B}'(0) = 0$. Thus, we only need to check that there exist~$\hat{\Gamma} \ge \hat{\gamma} > 0$ for which
\begin{equation*}
\hat{\gamma} \le \hat{B}''(t) \le \hat{\Gamma} \mbox{ for any } t > 0.
\end{equation*}
Notice that when~$t \ge M$ this fact is obviously true. On the other hand, if~$t \in (0, M)$ we compute
$$
\hat{B}''(t) = B''(t) \ge \bar{\gamma} (\bar{\kappa} + t)^{p - 2} \ge \bar{\gamma} \min \left\{ \bar{\kappa}^{p - 2}, (\bar{\kappa} + M)^{p - 2} \right\} =: \hat{\gamma},
$$
and
$$
\hat{B}''(t) = B''(t) \le \bar{\Gamma} (\bar{\kappa} + t)^{p - 2} \le \bar{\Gamma} \max \left\{ \bar{\kappa}^{p - 2}, (\bar{\kappa} + M)^{p - 2} \right\} =: \hat{\Gamma}.
$$
This finishes the proof.
\end{proof}

To conclude the section, we comment on the regularity of bounded weak solutions to~\eqref{eleq}. The result is an application of the standard interior degenerate (or non-degenerate) elliptic regularity theory of~\cite{LU68},~\cite{DiB83} and~\cite{T84}. See~\cite[Section~3]{CFV14} for more details.

\begin{proposition} \label{ureg}
Let~$u$ be as in Theorem~\ref{monformthm}. Then,~$u \in C_\loc^{1, \alpha}(\R^n) \cap C^3\left( \{ \nabla u \ne 0 \} \right)$, for some~$\alpha \in (0, 1)$, and~$\nabla u \in L^\infty(\R^n)$.

Moreover, if~$(ii)$ of Theorem~\ref{monformthm} is in force, then~$u$ is of class~$C_\loc^{3, \alpha}(\R^n)$.
\end{proposition}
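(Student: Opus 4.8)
The plan is to establish Proposition~\ref{ureg} by combining the classical interior regularity theory for degenerate and non-degenerate quasilinear elliptic equations with the pointwise gradient bound already available from~\cite[Theorem~1.1]{CFV14}. First I would record the structural properties of the operator in~\eqref{eleq}: writing $A(\xi) := B'(H(\xi)) \nabla H(\xi)$, the equation reads $\dive\, A(\nabla u) + F'(u) = 0$, and by Theorem~\ref{T-RES} (or directly from~(A)) the map $A$ satisfies the monotonicity and growth conditions of a $p$-Laplacian-type operator: $\langle \Hess(B\circ H)(\xi)\zeta,\zeta\rangle \ge \gamma(\kappa+|\xi|)^{p-2}|\zeta|^2$ and the matching upper bound, so $A$ fits the hypotheses of~\cite{DiB83, T84} under case~$(i)$, and of~\cite{LU68} under case~$(ii)$ since there the operator is uniformly elliptic on bounded sets by~(B). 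Since $u \in L^\infty(\R^n)$ in both cases, and $F \in C^{2,\beta}_\loc$ so $F'(u) \in L^\infty_\loc$, the forcing term is a bounded measurable function locally.

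The main steps, in order, are: (1) invoke the local boundedness of the gradient — under~$(i)$ this is exactly~\cite[Theorem~1.1]{CFV14} applied on balls, giving $\nabla u \in L^\infty(\R^n)$ (or at least $L^\infty_\loc$, upgraded to global by the cited estimate); under~$(ii)$ this is immediate since $u \in W^{1,\infty}(\R^n)$ by assumption. (2) With $\nabla u$ locally bounded, apply the interior $C^{1,\alpha}$ estimate of DiBenedetto~\cite{DiB83} and Tolksdorf~\cite{T84} for degenerate/singular quasilinear equations to conclude $u \in C^{1,\alpha}_\loc(\R^n)$ for some $\alpha \in (0,1)$ in case~$(i)$; in case~$(ii)$ the operator is non-degenerate and $C^{3,\beta}$, so~\cite{LU68} (or again~\cite{T84}) gives $u \in C^{1,\alpha}_\loc$. (3) On the open set $\{\nabla u \ne 0\}$ the operator $A$ is smooth (indeed $C^{2,\beta}_\loc$, since $B \in C^{3,\beta}_\loc((0,+\infty))$ and $H \in C^{3,\beta}_\loc(\R^n\setminus\{0\})$) and uniformly elliptic on that set because $\Hess(B\circ H)$ is positive definite away from the origin; hence by Schauder theory, bootstrapping from $C^{1,\alpha}$, we get $u \in C^3(\{\nabla u \ne 0\})$ — the regularity of $F \in C^{2,\beta}_\loc$ is just enough to push $u$ to $C^{3,\beta}_\loc$ there, in particular $C^3$. (4) Finally, in case~$(ii)$, since the operator is globally non-degenerate, elliptic near points where $\nabla u = 0$ as well, the Schauder bootstrap applies on the whole domain: from $u \in C^{1,\alpha}_\loc$ one obtains $B\circ H \in C^{3,\beta}_\loc(\R^n)$ by~(B) and $F'(u) \in C^{2,\beta}_\loc$, hence $u \in C^{3,\alpha}_\loc(\R^n)$.

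I would then just assemble these citations into a short proof, essentially deferring to~\cite[Section~3]{CFV14} for the detailed verification that the hypotheses of the cited regularity theorems are met, exactly as the paper signals. The one point that needs a sentence of care is the global (as opposed to merely local) gradient bound $\nabla u \in L^\infty(\R^n)$ in case~$(i)$: this is not formal interior regularity but requires the scaling-invariant a priori estimate of~\cite[Theorem~1.1]{CFV14}, which yields a bound on $\|\nabla u\|_{L^\infty(B_r(x_0))}$ depending only on $\|u\|_{L^\infty}$, the structural constants, and $r$ — so letting $r$ be fixed and $x_0$ vary gives the uniform bound. The other mildly delicate point is ensuring that the transition between the degenerate regime (near $\nabla u = 0$) and the elliptic regime (on $\{\nabla u \ne 0\}$) does not lose the claimed $C^3$ interior regularity away from the critical set; this is standard once $C^{1,\alpha}_\loc$ is known, since on any compactly contained subset of $\{\nabla u \ne 0\}$ the coefficients are Hölder and uniformly elliptic.

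The main obstacle — really the only nontrivial ingredient — is the global gradient estimate in case~$(i)$, i.e.\ the reliance on~\cite[Theorem~1.1]{CFV14}; everything else is a routine invocation of the DiBenedetto–Tolksdorf–Ladyzhenskaya–Ural'tseva theory plus Schauder bootstrapping, which is why the paper (and I) would simply cite~\cite{LU68, DiB83, T84} and~\cite[Section~3]{CFV14} rather than reproduce the arguments.
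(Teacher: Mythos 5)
Your proposal is correct and follows essentially the same route as the paper, which itself gives no written proof but simply defers to the interior regularity theory of \cite{LU68}, \cite{DiB83}, \cite{T84} and to \cite[Section~3]{CFV14}, exactly the references you assemble. The only quibble is attributional: the global bound $\nabla u \in L^\infty(\R^n)$ in case~$(i)$ already follows from the translation-invariant interior gradient estimate on unit balls combined with $u \in L^\infty(\R^n)$ (and is logically prior to the $P$-function estimate of \cite[Theorem~1.1]{CFV14}, which is proved using this regularity), but this does not affect the validity of your argument.
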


\section{On the equivalence between assumptions~(A)-(B) and~(A)\ensuremath{'}-(B)\ensuremath{'}} \label{ABsec}

In this second preliminary section we prove the equivalence of the two couples of structural conditions stated in the introduction. We show that both~(A) and~(B) respectively boil down to the simpler and more operational~(A)\ensuremath{'} and~(B)\ensuremath{'}. First, we have

\begin{proposition} \label{(A)char}
Let~$B \in C^2((0, +\infty))$ be a function satisfying~\eqref{Bpos} and~$H \in C^2(\R^n \setminus \{ 0 \})$ be positive homogeneous of degree~$1$, such that~\eqref{Hpos} is true. Then, assumptions~(A) and~(A)\ensuremath{'} are equivalent. Moreover, we may take
\begin{equation} \label{kappabarkappa}
\bar{\kappa} = \kappa,
\end{equation}
and the constants~$\bar{\gamma}, \bar{\Gamma}, \lambda$ and~$\gamma, \Gamma$ to be independent of~$\kappa$.
\end{proposition}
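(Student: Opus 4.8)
The plan is to base the whole argument on the explicit formula for the Hessian of the composition and then to test it along two carefully chosen families of directions. Set $f:=B\circ H$; the chain rule gives, for every $\xi\in\R^n\setminus\{0\}$,
$$
f_{ij}(\xi)=B''(H(\xi))\,H_i(\xi)H_j(\xi)+B'(H(\xi))\,H_{ij}(\xi).
$$
Since $H$ is continuous, positive and $1$-homogeneous, there are $0<c_1\le c_2$ with $c_1|\xi|\le H(\xi)\le c_2|\xi|$; moreover $\nabla H$ is bounded on $\R^n\setminus\{0\}$ (being $0$-homogeneous) and $\sum_{ij}|H_{ij}(\xi)|\le C|\xi|^{-1}$ (being $(-1)$-homogeneous). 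A fact used over and over is that $(\kappa+H(\xi))^{p-2}$ and $(\kappa+|\xi|)^{p-2}$ are comparable with constants depending only on $c_1,c_2$ and $p$ and \emph{not} on $\kappa$; this is exactly what will produce~\eqref{kappabarkappa} and keep all the structural constants independent of $\kappa$. Finally, by~\eqref{i} one has $\langle\nabla H(\xi),\xi\rangle=H(\xi)>0$, so $\R^n=\R\xi\oplus\nabla H(\xi)^\perp$ and any $\zeta\in\R^n$ may be written as $\zeta=s\xi+w$ with $w\in\nabla H(\xi)^\perp$. The two decisive test directions are $\zeta=\xi$, which annihilates the term $B'(H)\,H_{ij}$ by~\eqref{ii} and isolates $B''$, and $\zeta\in\nabla H(\xi)^\perp$, which annihilates the rank-one part $B''\,H_iH_j$ and isolates $B'(H)\,H_{ij}$.

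For the implication (A)\ensuremath{'}$\,\Rightarrow\,$(A) I would insert the decomposition $\zeta=s\xi+w$ into the formula above: using $\langle\nabla H(\xi),\zeta\rangle=sH(\xi)$ (by~\eqref{i}) and $H_{ij}(\xi)\xi_i=0$ (by~\eqref{ii}), all the cross terms disappear and
$$
f_{ij}(\xi)\zeta_i\zeta_j=B''(H(\xi))\,H(\xi)^2s^2+B'(H(\xi))\,H_{ij}(\xi)w_iw_j.
$$
Estimating the second summand from below by $\lambda|\xi|^{-1}B'(H(\xi))|w|^2$ through~\eqref{Huniell2}, inserting the lower bounds on $B'$ and $B''$ from (A)\ensuremath{'}, using $H(\xi)\approx|\xi|$ and comparing with $|\zeta|^2\le 2s^2|\xi|^2+2|w|^2$, one obtains the ellipticity lower bound of (A) with $\kappa=\bar\kappa$. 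The bound on $\sum_{ij}|f_{ij}|$ follows from $\sum_{ij}|f_{ij}|\le B''(H)\big(\sum_i|H_i|\big)^2+B'(H)\sum_{ij}|H_{ij}|$ together with the upper bounds of (A)\ensuremath{'} and the estimates $|\nabla H|\le C$, $\sum_{ij}|H_{ij}|\le C|\xi|^{-1}$; here it is essential that $B'(t)\le\bar\Gamma(\bar\kappa+t)^{p-2}t$ carries the extra factor $t$, which (since $H(\xi)\approx|\xi|$) precisely cancels the $|\xi|^{-1}$ produced by the Hessian of $H$. All the constants obtained this way depend only on $\bar\gamma,\bar\Gamma,\lambda$, on $p$, and on $H$.

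The converse (A)$\,\Rightarrow\,$(A)\ensuremath{'} is the part I expect to be delicate, since one has to \emph{decouple} $B$ from $H$ inside the single quantity $\Hess(B\circ H)$. Testing with $\zeta=\xi$ and using~\eqref{i} and~\eqref{ii} yields $f_{ij}(\xi)\xi_i\xi_j=B''(H(\xi))H(\xi)^2$; realising, for each $t>0$, the value $H(\xi)=t$ (for instance with $\xi=(t/H(\omega_0))\omega_0$ for a fixed $\omega_0\in S^{n-1}$), dividing the two inequalities of (A) by $|\xi|^2$ and using $H\approx|\xi|$ gives at once $\bar\gamma(\kappa+t)^{p-2}\le B''(t)\le\bar\Gamma(\kappa+t)^{p-2}$ for every $t>0$, with $\bar\kappa=\kappa$. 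Testing with a unit $\zeta\in\nabla H(\xi)^\perp$ instead gives $f_{ij}(\xi)\zeta_i\zeta_j=B'(H(\xi))H_{ij}(\xi)\zeta_i\zeta_j$, which (A) squeezes between two \emph{positive} multiples of $(\kappa+|\xi|)^{p-2}$; since $B'>0$ by~\eqref{Bpos}, this forces $H_{ij}(\xi)\zeta_i\zeta_j>0$ for every such $\zeta$, and — the left-hand side being continuous and positive on the compact set $\{(\omega,\zeta):\omega\in S^{n-1},\ \zeta\in\nabla H(\omega)^\perp,\ |\zeta|=1\}$ — it admits there a positive minimum $\lambda$ depending only on $H$; by homogeneity this is exactly~\eqref{Huniell2}, i.e. $H$ is uniformly elliptic with a $\kappa$-free constant. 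Feeding the two-sided bound $\lambda\le H_{ij}(\xi)\zeta_i\zeta_j\le\Lambda$ back into the squeeze, scaling $\xi=\rho\omega_0$ and setting $t=\rho H(\omega_0)$, one recovers $\bar\gamma(\kappa+t)^{p-2}t\le B'(t)\le\bar\Gamma(\kappa+t)^{p-2}t$ for all $t>0$, with constants independent of $\kappa$. Together with the $B''$ estimate and the uniform ellipticity of $H$ this is precisely (A)\ensuremath{'}, and it also yields~\eqref{kappabarkappa}.

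The main obstacle is exactly this decoupling step in (A)$\,\Rightarrow\,$(A)\ensuremath{'}: the Hessian $\Hess(B\circ H)$ entangles $B$ and $H$ nonlinearly, and one must recognise that the two test directions $\zeta=\xi$ (which annihilates the Hessian of $H$) and $\zeta\perp\nabla H(\xi)$ (which annihilates the rank-one part $B''\,H_iH_j$) separate them cleanly; the accompanying compactness argument on the sphere is what guarantees that the ellipticity constant $\lambda$, hence all the constants appearing in (A)\ensuremath{'}, can be taken independently of $\kappa$. Everything else is routine bookkeeping with the homogeneity identities of Lemma~\ref{homain}.
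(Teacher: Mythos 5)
Your argument is correct and its overall architecture coincides with the paper's: the chain--rule formula for $\Hess(B\circ H)$, the decomposition $\zeta=s\xi+w$ with $w\in\nabla H(\xi)^\perp$ (admissible because $\langle\nabla H(\xi),\xi\rangle=H(\xi)>0$), the two test directions $\zeta=\xi$ and $\zeta\in\nabla H(\xi)^\perp$, and the $\kappa$-uniform comparability of $(\kappa+H(\xi))^{p-2}$ with $(\kappa+|\xi|)^{p-2}$, which is exactly the paper's $c_*$ bookkeeping and is what delivers \eqref{kappabarkappa}. You deviate in two sub-steps, both legitimately. First, in (A)\ensuremath{'}$\Rightarrow$(A) the paper distinguishes the cases $2|s\xi|\le|\zeta|$ and $2|s\xi|>|\zeta|$ and discards one of the two non-negative summands in each case, whereas you retain both summands and conclude via the elementary bound $|\zeta|^2\le 2s^2|\xi|^2+2|w|^2$; this eliminates the case analysis at no cost. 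Second, in (A)$\Rightarrow$(A)\ensuremath{'} the paper produces the ellipticity constant of $H$ quantitatively, through the chain of estimates culminating in the explicit formula \eqref{lambdadef} (this is where the hypothesis $\kappa<1$ is used), while you obtain $\lambda$ as the minimum of the continuous function $(\omega,\zeta)\mapsto H_{ij}(\omega)\zeta_i\zeta_j$ on the compact set $\left\{\omega\in S^{n-1},\ \zeta\in\nabla H(\omega)^\perp,\ |\zeta|=1\right\}$, its positivity there being forced by the ellipticity inequality of (A) together with $B'>0$. Your route is shorter and yields a $\lambda$ depending only on $H$, hence trivially independent of $\kappa$, at the price of being non-effective; the paper's route keeps all constants explicit in terms of $\gamma,\Gamma,p$ and the structural constants of $H$. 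Both variants fully establish the statement as formulated.
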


\begin{proof}
First of all, denote with~$C \ge 1$ a constant for which
$$
C^{-1} |\xi| \le H(\xi) \le C |\xi|, \mbox{ } |\nabla H(\xi)| \le C \mbox{ and } \left| \Hess(H) \right| \le C |\xi|^{-1},
$$
hold for any~$\xi \in \R^n \setminus \{ 0 \}$. Then, observe that the ellipticity and growth conditions displayed in~(A) are respectively equivalent to
\begin{align}
\label{ellexp} \left[ B''(H(\xi)) H_i(\xi) H_j(\xi) + B'(H(\xi)) H_{i j}(\xi) \right] \zeta_i \zeta_j & \ge \gamma \left( \kappa + |\xi| \right)^{p - 2} |\zeta|^2, \\
\label{groexp} \sum_{i, j = 1}^n \left| B''(H(\xi)) H_i(\xi) H_j(\xi) + B'(H(\xi)) H_{i j}(\xi) \right| & \le \Gamma \left( \kappa + |\xi| \right)^{p - 2},
\end{align}
for any~$\xi \in \R^n \setminus \{ 0 \}$ and~$\zeta \in \R^n$. 

We start by showing that~(A)\ensuremath{'} implies~(A), in its above mentioned equivalent form. First, we check that~\eqref{groexp} is true. We have
\begin{align*}
 \sum_{i, j = 1}^n \left| B''(H(\xi)) H_i(\xi) H_j(\xi) + B'(H(\xi)) H_{i j}(\xi) \right| & \le \bar{\Gamma} (\bar{\kappa} + H(\xi))^{p - 2} \left[ C^2 + C H(\xi) |\xi|^{-1}  \right] \\
 & \le 2 \bar{\Gamma} C^2 (\bar{\kappa} + c_* |\xi|)^{p - 2} \\
 & = 2 \bar{\Gamma} C^2 c_*^{p - 2} (c_*^{-1} \bar{\kappa} + |\xi|)^{p - 2},
\end{align*}
with
\begin{equation} \label{c*def}
c_* := \begin{cases}
C & \qquad \mbox{if } p \ge 2, \\
1/C & \qquad \mbox{if } 1 < p < 2.
\end{cases}
\end{equation}
The proof of~\eqref{ellexp} is a bit more involved. We write
\begin{equation} \label{zetadec}
\zeta = \alpha \xi + \eta,
\end{equation}
for some~$\alpha \in \R$ and~$\eta \in \nabla H(\xi)^\perp$. We stress that~$\xi$ and~$\nabla H(\xi)^\perp$ span the whole~$\R^n$ in view of~\eqref{i}. Thus, decomposition~\eqref{zetadec} is admissible. We distinguish between the two cases:~$2 |\alpha \xi| \le |\zeta|$ and~$2 |\alpha \xi| > |\zeta|$. In the first situation, we have
$$
|\eta|^2 = |\zeta - \alpha \xi|^2 = |\zeta|^2 - 2 \alpha \langle \zeta, \xi \rangle + \alpha^2 |\xi|^2 \ge (|\zeta| - |\alpha \xi|)^2 \ge \frac{|\zeta|^2}{4}.
$$
Therefore, by applying~\eqref{i},~\eqref{ii} and~\eqref{Huniell2}, we get
\begin{align*}
& \left[ B''(H(\xi)) H_i(\xi) H_j(\xi) + B'(H(\xi)) H_{i j}(\xi) \right] \zeta_i \zeta_j \\
& \qquad = B''(H(\xi)) (H_i(\xi) \zeta_i)^2 + B'(H(\xi)) H_{i j}(\xi) \eta_i \eta_j \ge 0 + \bar{\gamma} (\bar{\kappa} + H(\xi))^{p - 2} H(\xi) \lambda |\xi|^{-1} |\eta|^2 \\
& \qquad \ge 4^{-1} \bar{\gamma} \lambda C^{-1} (\bar{\kappa} + c_*^{-1} |\xi|)^{p - 2} |\zeta|^2 = 4^{-1} \bar{\gamma} \lambda C^{-1} c_*^{2 - p} (c_* \bar{\kappa} + |\xi|)^{p - 2} |\zeta|^2 \\
& \qquad \ge 4^{-1} \bar{\gamma} \lambda C^{-1} c_*^{2 - p} (c_*^{-1} \bar{\kappa} + |\xi|)^{p - 2} |\zeta|^2,
\end{align*}
where in last line we recognized that, for every~$p > 1$,
\begin{equation} \label{CC-1}
(c_* \bar{\kappa} + s)^{p - 2} \ge (c_*^{-1} \bar{\kappa} + s)^{p - 2} \mbox{ for any } s > 0,
\end{equation}
being~$C \ge 1$. On the other hand, if the opposite inequality occurs we deduce that, by~\eqref{i},
$$
\left| \langle \nabla H(\xi), \zeta \rangle \right| = \left| \langle \nabla H(\xi), \alpha \xi + \eta \rangle \right| = |\alpha| H(\xi) \ge \frac{|\alpha| |\xi|}{C} \ge \frac{|\zeta|}{2 C},
$$
so that, we compute
\begin{align*}
& \left[ B''(H(\xi)) H_i(\xi) H_j(\xi) + B'(H(\xi)) H_{i j}(\xi) \right] \zeta_i \zeta_j \\
& \qquad = B''(H(\xi)) (H_i(\xi) \zeta_i)^2 + B'(H(\xi)) H_{i j}(\xi) \eta_i \eta_j \ge \bar{\gamma} (\bar{\kappa} + H(\xi))^{p - 2} (2 C)^{-2} |\zeta|^2 + 0 \\
& \qquad \ge 4^{-1} \bar{\gamma} C^{-2} (\bar{\kappa} + c_*^{-1} |\xi|)^{p - 2} |\zeta|^2 = 4^{-1} \bar{\gamma} C^{-2} c_*^{2 - p} (c_* \bar{\kappa} + |\xi|)^{p - 2} |\zeta|^2 \\
& \qquad \ge 4^{-1} \bar{\gamma} C^{-2} c_*^{2 - p} (c_*^{-1} \bar{\kappa} + |\xi|)^{p - 2} |\zeta|^2,
\end{align*}
and thus the proof of~\eqref{ellexp} is complete.

Now, we focus on the opposite implication, i.e. that~(A) implies~(A)\ensuremath{'}. Let~$t > 0$ and take~$\xi \in \R^n \setminus \{ 0 \}$ such that~$t = H(\xi)$. Plugging~$\zeta = \xi$ in~\eqref{ellexp}, by~\eqref{i} and~\eqref{ii} we obtain
$$
\gamma \left( \kappa + |\xi| \right)^{p - 2} |\xi|^2 \le \left[ B''(t) H_i(\xi) H_j(\xi) + B'(t) H_{i j}(\xi) \right] \xi_i \xi_j = B''(t) H^2(\xi),
$$
and hence that
$$
B''(t) \ge \gamma C^{-2} (\kappa + c_*^{-1} t)^{p - 2} = \gamma C^{-2} c_*^{2 - p} (c_* \kappa + t)^{p - 2}.
$$
On the other hand, the choice~$\zeta \in \nabla H(\xi)^\perp$ in~\eqref{ellexp} leads to
\begin{equation} \label{tech1}
\begin{aligned}
\gamma \left( \kappa + |\xi| \right)^{p - 2} |\zeta|^2 & \le \left[ B''(t) H_i(\xi) H_j(\xi) + B'(t) H_{i j}(\xi) \right] \zeta_i \zeta_j  = B'(t) H_{i j}(\xi) \zeta_i \zeta_j \\
& \le C B'(t) |\xi|^{-1} |\zeta|^2 \le C^2 B'(t) t^{-1} |\zeta|^2.
\end{aligned}
\end{equation}
As before we deduce
$$
B'(t) \ge \gamma C^{-2} c_*^{2 - p} (c_* \kappa + t)^{p - 2} t.
$$
The remaining inequalities involving~$B'$ and~$B''$ in~(A)\ensuremath{'} can be similarly deduced from~\eqref{groexp}. Indeed, notice that~\eqref{i} and~\eqref{ii} respectively yield
\begin{align*}
H_1(e_1) & = \langle \nabla H(e_1), e_1 \rangle = H(e_1), \\
H_{1 1} (e_1) & = \langle \nabla^2 H(e_1) e_1, e_1 \rangle = 0.
\end{align*}
Hence, if we take~$\mu > 0$ such that~$t = H(\mu e_1)$, setting~$\xi = \mu e_1$ in~\eqref{groexp} we get
\begin{align*}
\Gamma \left( \kappa + |\xi| \right)^{p - 2}& \ge \sum_{i, j = 1}^n \left| B''(t) H_i(\mu e_1) H_j(\mu e_1) + B'(t) H_{i j}(\mu e_1) \right| \\
& \ge B''(t) H_1(e_1) H_1(e_1) + B'(t) \mu^{-1} H_{1 1}(e_1) \\
& = B''(t) H^2(e_1).
\end{align*}
Consequently, recalling~\eqref{CC-1} we obtain
$$
B''(t) \le \Gamma C^2 (\kappa + c_* t)^{p - 2} = \Gamma C^2 c_*^{p - 2} (c_*^{-1} \kappa + t)^{p - 2} \le \Gamma C^2 c_*^{p - 2} (c_* \kappa + t)^{p - 2}.
$$
As a byproduct, the previous inequality implies in particular that
$$
B'(1) = \int_0^1 B''(t) \, dt \le \frac{\Gamma C^2 c_*^{p - 2}}{p - 1} (c_* \kappa + 1)^{p - 1}.
$$
Hence, by taking~$t = 1$ in the first line of~\eqref{tech1} we see that~$H$ is uniformly elliptic, with constant
\begin{equation} \label{lambdadef}
\lambda = \frac{(p - 1) c_*^{2(2 - p)} \gamma}{2 C^2 (c_* + 1) \Gamma}.
\end{equation}
Note that we took advantage of the fact that~$\kappa < 1$, along with definition~\eqref{c*def}, to deduce this bound. Finally, the growth condition on~$B'$ can be obtained as follows. Select~$\xi \in \R^n \setminus \{ 0 \}$ in a way that~$e_1 \in \nabla H(\xi)^\perp$ and~$H(\xi) = t$. This can be easily done for instance by taking~$\xi = t \nabla H^*(e_2)$. Indeed, by Lemma~\ref{H*reg}, together with the homogeneity properties of~$H$ and~$\nabla H$, we have
\begin{align*}
0 & = \langle e_2, e_1 \rangle = H(H^*(e_2) \nabla H^*(e_2)) \langle \nabla H(H^*(e_2) \nabla H^*(e_2)), e_1 \rangle \\
& = H^*(e_2) H(\nabla H^*(e_2)) \langle \nabla H(\nabla H^*(e_2)), e_1 \rangle = H^*(e_2) \langle \nabla H(\nabla H^*(e_2)), e_1 \rangle.
\end{align*}
Such a choice implies that
$$
\langle \nabla H(\xi), e_1 \rangle = \langle \nabla H( \nabla H^*(e_2)), e_1 \rangle = 0.
$$
Moreover, it is easy to see that~$H(\xi) = t$. From~\eqref{groexp} we may then compute
\begin{align*}
\Gamma \left( \kappa + |\xi| \right)^{p - 2}& \ge \sum_{i, j = 1}^n \left| B''(t) H_i(\xi) H_j(\xi) + B'(t) H_{i j}(\xi) \right| \\
& \ge B''(t) H_1(\xi) H_1(\xi) + B'(t) H_{1 1}(\xi) \\
& = B'(t) H_{1 1}(\xi) \ge B'(t) \lambda |\xi|^{-1},
\end{align*}
from which we get, as before,
$$
B'(t) \le \Gamma \lambda^{-1} C c_*^{p - 2} (c_* \kappa + t)^{p - 2} t,
$$
with~$\lambda$ as in~\eqref{lambdadef}. This concludes the proof of the second part of our claim.

The fact that we may assume~\eqref{kappabarkappa} to hold - up to relabeling the constants~$\gamma, \Gamma$ or~$\bar{\gamma}, \bar{\Gamma}$ in dependence of~$C$ - is a consequence of the inequalities
$$
(\kappa + t)^{p - 2} \le (c_* \kappa + t)^{p - 2} \le c_*^{p - 2} (\kappa + t)^{p - 2},
$$
and
\begin{equation*}
c_*^{2 - p} (\bar{\kappa} + |\xi|)^{p - 2} \le (c_*^{- 1}\bar{\kappa} + |\xi|)^{p - 2} \le (\bar{\kappa} + |\xi|)^{p - 2}. \qedhere
\end{equation*}
\end{proof}

On the other hand, the characterization of~(B) in terms of~(B)\ensuremath{'} is the content of the following

\begin{proposition} \label{(B)char}
Let~$B \in C^3((0, +\infty)) \cap C^1([0, +\infty))$ be a function satisfying~\eqref{Bpos} along with~$B(0) = B'(0) = 0$ and~$H \in C^3(\R^n \setminus \{ 0 \})$ be positive homogeneous of degree~$1$, such that~\eqref{Hpos} is true. Then, hypotheses~(B) and~(B)\ensuremath{'} are equivalent.
\end{proposition}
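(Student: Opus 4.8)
The plan is to prove the two implications separately. The delicate one is $(B)'\Rightarrow(B)$, which at heart is a statement on the regularity at the origin of radial functions; the reverse implication $(B)\Rightarrow(B)'$ is then obtained by inspecting $B\circ H$ along the rays emanating from $0$. The subtle point throughout is the behaviour at $\xi=0$, and this is exactly where the requirements $B'(0)=0$ and $B'''(0)=0$ enter the picture; everything else is routine and relies on the homogeneity identities of Lemma~\ref{homain}.

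\medskip

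\textbf{The implication $(B)'\Rightarrow(B)$.} Assume $H=H_M$ with $M\in\Mat_n(\R)$ symmetric and positive definite, and $B\in C^{3,\beta}_\loc([0,+\infty))$ with $B(0)=B'(0)=B'''(0)=0$ and $B''(0)>0$. Performing the linear change of variables $\eta=M^{1/2}\xi$, the composition $(B\circ H_M)(\xi)=B(\sqrt{\langle M\xi,\xi\rangle})$ becomes the radial function $\eta\mapsto B(|\eta|)$, so it suffices to show that the latter lies in $C^{3,\beta}_\loc(\R^n)$. Away from the origin this is clear. Near $0$ one subtracts the smooth polynomial $\frac{B''(0)}{2}|\eta|^2$ and is left with $S(|\eta|)$, where $S(r):=B(r)-\frac{B''(0)}{2}r^2$ satisfies $S'(0)=S''(0)=S'''(0)=0$. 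Taylor's formula together with the H\"older bound on $S'''$ yields $|S^{(k)}(r)|\le C\,r^{3+\beta-k}$ for $k=1,2,3$ and $r$ small, and a direct computation of the partial derivatives of $S(|\eta|)$ of orders $1$, $2$ and $3$ away from the origin shows that each of them is $O(|\eta|^\beta)$; hence they extend continuously by $0$ to $\eta=0$, so $S(|\eta|)\in C^{3,\beta}$ near the origin by a standard argument, and $B\circ H_M\in C^{3,\beta}_\loc(\R^n)$. It remains to verify the ellipticity bound in $(B)$. By the above $\Hess(B\circ H)$ is continuous on $\R^n$; evaluating along the ray $t\mapsto te$ gives $[\Hess\,(B\circ H)(0)]_{ij}e_ie_j=B''(0)H(e)^2=B''(0)\langle Me,e\rangle$, a positive definite form since $B''(0)>0$ and $M>0$, so $\Hess(B\circ H)(0)$ is positive definite. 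For $\xi\ne0$, write $\zeta=\alpha\xi+\eta$ with $\eta\in\nabla H(\xi)^\perp$, which is admissible by~\eqref{i}; using~\eqref{i},~\eqref{ii}, the uniform ellipticity~\eqref{Huniell2} of $H=H_M$ (whose unit ball is a Euclidean ellipsoid, hence uniformly convex) and the positivity~\eqref{Bpos} of $B'$, $B''$ on $(0,+\infty)$, one obtains $[\Hess\,(B\circ H)(\xi)]_{ij}\zeta_i\zeta_j=B''(H(\xi))\,\alpha^2 H(\xi)^2+B'(H(\xi))\,H_{ij}(\xi)\eta_i\eta_j>0$ whenever $\zeta\ne0$. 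Since a continuous, everywhere positive definite symmetric matrix field has a uniform positive lower bound of its quadratic form on any compact set, the quantitative bound in $(B)$ on $\{|\xi|\le K\}$ follows.

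\medskip

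\textbf{The implication $(B)\Rightarrow(B)'$.} Assume $B\circ H\in C^{3,\beta}_\loc(\R^n)$ and that the ellipticity bound in $(B)$ holds. Fix a unit vector $e$. Since $H$ is positive $1$-homogeneous, $t\mapsto(B\circ H)(te)$ equals $B(tH(e))$ for $t\ge0$ and $B(-tH(-e))$ for $t\le0$, with $H(e),H(-e)>0$ by~\eqref{Hpos}. Restricting to $t\ge0$ shows $B\in C^{3,\beta}_\loc([0,+\infty))$. Moreover $t\mapsto(B\circ H)(te)$ is $C^3$ near $0$, and its third derivative at $0$, computed as a one-sided limit from the right and from the left, equals both $H(e)^3B'''(0)$ and $-H(-e)^3B'''(0)$; hence $\big(H(e)^3+H(-e)^3\big)B'''(0)=0$, which forces $B'''(0)=0$. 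Next, since $B\circ H\in C^2(\R^n)$, the matrix $Q:=\Hess(B\circ H)(0)$ is well defined, and the bound in $(B)$ with $K=1$, read at $\xi=0$, gives $Q_{ij}\zeta_i\zeta_j\ge\gamma|\zeta|^2$ for some $\gamma>0$. Evaluating along $t\mapsto te$ yields $Q_{ij}e_ie_j=B''(0)H(e)^2$ for every unit $e$; picking one such $e$ and using $H(e)>0$ we get $B''(0)>0$. Setting $M:=B''(0)^{-1}Q\in\Mat_n(\R)$, which is symmetric and, as a positive multiple of $Q$, positive definite, the identity becomes $H(e)^2=\langle Me,e\rangle$ for all unit $e$, hence $H(\xi)^2=\langle M\xi,\xi\rangle$ for all $\xi\in\R^n$ by $1$-homogeneity, i.e.\ $H=H_M$. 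Combined with $B\in C^{3,\beta}_\loc([0,+\infty))$, $B'''(0)=0$ and $B''(0)>0$, this is precisely $(B)'$.

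\medskip

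\textbf{The main obstacle.} The crux is the analysis at $\xi=0$. In $(B)'\Rightarrow(B)$ one must show that the a priori singular third-order expressions obtained by differentiating $B(|\eta|)$ genuinely extend to H\"older continuous functions, which is where $B'(0)=0$ and $B'''(0)=0$ are used; the ellipticity part is comparatively soft once the $C^2$ regularity up to the origin is available. Symmetrically, in $(B)\Rightarrow(B)'$ the same cancellation read backwards produces $B'''(0)=0$, and the identification $H=H_M$ rests on the observation that the Hessian of $B\circ H$ at the origin is forced to be a nondegenerate quadratic form proportional to $H^2$.
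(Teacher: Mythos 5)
Your proposal is correct, and in both directions it takes a genuinely different route from the paper. For $(B)'\Rightarrow(B)$, the paper keeps a general $H$ of the form~\eqref{HM} and computes the limits of $(B\circ H)_{ij}(te)$ and $(B\circ H)_{ijk}(te)$ along rays, exploiting the homogeneity identities of Lemma~\ref{homain}, the algebraic relation $M_{ij}=H_iH_j+HH_{ij}$ and the expansion $\tfrac{B'(s)}{s}-B''(s)=o(s)$; your linear substitution $\eta=M^{1/2}\xi$ reduces everything to the radial function $B(|\eta|)$, and subtracting $\tfrac{B''(0)}{2}|\eta|^2$ isolates a remainder $S$ with $|S^{(k)}(r)|\le Cr^{3+\beta-k}$, which makes the role of $B'(0)=B'''(0)=0$ transparent. (Both your argument and the paper's stop at continuity of the third derivatives at the origin and assert the H\"older seminorm bound without verifying it, so you are not less rigorous than the source on that point.) The more substantial difference is in $(B)\Rightarrow(B)'$: the paper cites \cite[Appendix~B]{CFV14} for the structural fact that $H$ must be of the form~\eqref{HM}, whereas you derive it on the spot from the identity $\Hess(B\circ H)(0)[e,e]=B''(0)H(e)^2$, which directly identifies $M=B''(0)^{-1}\Hess(B\circ H)(0)$ and extends to all $\xi$ by homogeneity; this makes the implication self-contained. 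A further small gain is that your derivation of $B'''(0)=0$, by matching the one-sided third derivatives of $t\mapsto(B\circ H)(te)$ from the right and from the left, does not presuppose that $H$ is even, while the paper's computation of $(B\circ H)_{111}(0)$ uses $H(te_1)=|t|H(e_1)$ and hence relies on the evenness already extracted from the cited appendix.
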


\begin{proof}
We begin by showing that~(B)\ensuremath{'} implies~(B). First, we deal with the regularity of the composition~$B \circ H$. By the general assumptions on~$B$ and~$H$, it is clear that~$B \circ H \in C_\loc^{3, \beta}(\R^n \setminus \{ 0 \}) \cap C^1(\R^n)$. Thus, we only need to check the second and third derivatives of~$B \circ H$ at the origin. For any~$e \in S^{n - 1}$ and~$t > 0$, by the homogeneity of~$H$ we have
\begin{align*}
(B \circ H)_{i j}(t e) & = B''(H(t e)) H_i(t e) H_j(t e) + B'(H(t e)) H_{i j}(t e) \\
& = B''(t H(e)) H_i(e) H_j(e) + \frac{B'(t H(e))}{t H(e)} H(e) H_{i j}(e).
\end{align*}
Hence, taking the limit as~$t \rightarrow 0^+$
\begin{equation} \label{BH20}
\lim_{t \rightarrow 0^+} (B \circ H)_{i j}(t e) = B''(0) \left[ H_i(e) H_j(e) + H(e) H_{i j}(e) \right].
\end{equation}
Now, observe that, being~$H$ of the special form~\eqref{HM}, we may explicitly compute
\begin{equation} \label{Mij}
M_{i j} = \partial_{i j} \left( \frac{H^2}{2} \right) (\xi) = H_i(\xi) H_j(\xi) + H(\xi) H_{i j}(\xi),
\end{equation}
for any~$\xi \in \R^n$. As a consequence of~\eqref{Mij}, the right hand side of~\eqref{BH20} does not depend on~$e \in S^{n - 1}$ and so
$$
(B \circ H)_{i j}(0) = B''(0) M_{i j}.
$$
Now we focus on the third derivative. First, by differentiating~\eqref{Mij} we deduce the identity
$$
H_i(\xi) H_{j k}(\xi) + H_j(\xi) H_{i k}(\xi) + H_k(\xi) H_{i j}(\xi) = - H(\xi) H_{i j k}(\xi).
$$
With this in hand we compute
\begin{equation} \label{BH3}
\begin{aligned}
(B \circ H)_{i j k}(t e) & = B'''(H(t e))H_i(t e) H_j(t e) H_k(t e) \\
& \quad + B''(H(t e)) \left[ H_i(t e) H_{j k}(t e) + H_j(t e) H_{i k}(t e) + H_k(t e) H_{i j}(t e) \right] \\
& \quad + B'(H(t e)) H_{i j k}(t e) \\
& = B'''(t H(e)) H_i(e) H_j(e) H_k(e) \\
& \quad +\frac{1}{t H(e)} \left[ \frac{B'(t H(e))}{t H(e)} - B''(t H(e)) \right] H^2(e) H_{i j k}(e).
\end{aligned}
\end{equation}
Now, we claim that
\begin{equation} \label{limit0}
\lim_{s \rightarrow 0^+} \frac{1}{s} \left[ \frac{B'(s)}{s} - B''(s) \right] = 0.
\end{equation}
Indeed, since~$B'(0) = B'''(0) = 0$, the Taylor expansions of~$B'$ and~$B''$ are
$$
B'(s) = B''(0) s + o(s^2) \quad \mbox{and} \quad B''(s) = B''(0) + o(s),
$$
as~$s \rightarrow 0^+$. Therefore
$$
\frac{B'(s)}{s} - B''(s) = \frac{B''(0) s}{s} - B''(0) + o(s) = o(s),
$$
and~\eqref{limit0} follows. Thus, letting~$t \rightarrow 0^+$ in~\eqref{BH3}, we get
$$
\lim_{t \rightarrow 0^+}(B \circ H)_{i j k}(t e) = B'''(0) H_i(e) H_j(e) H_k(e) + 0 \cdot H^2(e) H_{i j k}(e) = 0,
$$
for any~$e \in S^{n - 1}$. We may thence conclude that~$B \circ H \in C_\loc^{3, \beta}(\R^n)$. Finally, we prove that~$\Hess \left( B \circ H \right)$ is uniformly elliptic on compact subsets, as required in~(B). Let
\begin{equation} \label{Cmax}
C := \max_{\xi \in S^{n - 1}} H(\xi).
\end{equation}
By~\eqref{Bpos} and~\eqref{B''0pos}, for any~$K > 0$, there exists~$\bar{\gamma} > 0$ such that
\begin{equation} \label{B''K}
B''(t) \ge \bar{\gamma},
\end{equation}
for any~$t \in [0, C K]$. Since~$B'(0) = 0$, we also infer that
\begin{equation} \label{B'K}
B'(t) = \int_0^t B''(s) \, ds \ge \bar{\gamma} t,
\end{equation}
for any~$t \in [0, C K]$. Let~$\xi, \eta \in \R^n$, with~$|\xi| \le K$. Observe that, by~\eqref{Cmax}, it holds
$$
H(\xi) \le |\xi| H \left( \frac{\xi}{|\xi|} \right) \le C K.
$$
Then, by~\eqref{B''K},~\eqref{B'K} and~\eqref{Mij},
\begin{align*}
(B \circ H)_{i j}(\xi) \eta_i \eta_j & = \left[ B''(H(\xi)) H_i(\xi) H_j(\xi) + B'(H(\xi)) H_{i j}(\xi) \right] \eta_i \eta_j \\
& \ge \bar{\gamma} \left[ H_i(\xi) H_j(\xi) + H(\xi) H_{i j}(\xi) \right] \eta_i \eta_j \\
& = \bar{\gamma} M_{i j} \eta_i \eta_j,
\end{align*}
and the result follows from the positive definiteness of~$M$.

Now, we turn to the converse implication, i.e. that~(B) implies~(B)\ensuremath{'}. First, we observe that~$H$ needs to be of the type~\eqref{HM}, in view of~\cite[Appendix~B]{CFV14}. Then, we address the regularity of~$B$. Being~$H$ even and using~\eqref{i}, we have
\begin{align*}
B''(0) & = \lim_{t \rightarrow 0^+} \frac{B'(t)}{t} = \lim_{s \rightarrow 0} \frac{B'(H(s e_1))}{H(s e_1)} = \lim_{s \rightarrow 0} \frac{(B \circ H)_1(s e_1)}{H(s e_1) H_1(s e_1)} \\
& = \lim_{s \rightarrow 0} \frac{s}{s H(e_1) H_1(e_1)} \frac{(B \circ H)_1(s e_1) - (B \circ H)_1(0)}{s} \\
& = H^{-2}(e_1) (B \circ H)_{1 1}(0),
\end{align*}
so that~$B \in C^2([0, +\infty))$. Moreover,~$B''(0) > 0$, as can be seen by testing with~$\xi = 0, \zeta = e_1$ the ellipticity condition of~(B). On the other hand, by~\eqref{i} and~\eqref{ii} we compute
\begin{align*}
(B \circ H)_{1 1 1}(0) & = \lim_{t \rightarrow 0} \frac{(B \circ H)_{1 1}(t e_1) - (B \circ H)_{1 1}(0)}{t} \\
& = \lim_{t \rightarrow 0} \frac{B''(H(t e_1)) H_1(t e_1) H_1(t e_1) + B'(H(t e_1)) H_{1 1}(t e_1) - B''(0) H^2(e_1)}{t} \\
& = \pm H^3(e_1) \lim_{t \rightarrow 0^\pm} \frac{B''(|t| H(e_1)) - B''(0)}{|t| H(e_1)} \\
& = \pm H^3(e_1) \lim_{s \rightarrow 0^+} \frac{B''(s) - B''(0)}{s}.
\end{align*}
Since the left hand side exists finite, the same should be true for the right one, too. Thus, we obtain that~$B \in C^3([0, +\infty))$ with~$B'''(0) = 0$. This concludes the proof, as the local H\"olderianity of~$B$ up to~$0$ may be easily deduced from that of~$B \circ H$.
\end{proof}

We remark that Theorem~\ref{T-RES} now follows
easily from Propositions~\ref{(A)char}
and~\ref{(B)char}.

\section{The monotonicity formula} \label{monformsec}
In this section we prove Theorem~\ref{monformthm}. Our argument is similar to that presented in~\cite{M89} and~\cite[Theorem 1.4]{CGS94}. Yet, we develop several technical adjustments in order to cope with the difficulties arising in the anisotropic setting.
In particular, in the classical, isotropic setting, the monotonicity
formulae implicitly rely on some Euclidean geometric features,
such as that a point on the unit sphere
coincides with the normal of the sphere at that point,
as well as the one of the dual sphere (that in the
isotropic setting coincides with the original one).
These Euclidean geometric properties are lost in our
case, therefore we need some more refined
geometrical and analytical studies.

The strategy we adopt to show the monotonicity of~$\E$ basically relies on taking its derivative and then checking that it is non-negative. To complete this task, however, we make some integral manipulation involving the Hessian of~$u$. Hence, we need~$u$ to be twice differentiable, at least in the weak sense.

If~$(ii)$ is assumed to hold, this is not an issue, since~$u$ is~$C^3$ (see Proposition~\ref{ureg}). Therefore, we only focus on case~$(i)$. In this framework the solution~$u$ is, in general, no more than~$C_\loc^{1, \alpha}$. To circumvent this lack of regularity, we introduce a sequence of approximating problems and perform the computation on their solutions. Passing to the limit, we then recover the result for~$u$. If one is interested in the proof under hypothesis~$(ii)$, he should simply ignore the perturbation argument and directly work with~$u$.

Prior to the proper proof of Theorem~\ref{monformthm}, we present some preparatory results about the above mentioned approximation technique. In every statement the functions~$B$,~$H$ and~$u$ are assumed to be satisfy assumption~$(i)$.

Let~$\varepsilon \in (0, 1)$ and consider the function~$B_\varepsilon$ defined by
\begin{equation} \label{Bepsdef}
B_\varepsilon(t) := B \left( \sqrt{\varepsilon^2 + t^2} \right) - B(\varepsilon),
\end{equation}
for any~$t > 0$.

First, we present a result which addresses the regularity and growth properties of~$B_\varepsilon$.

\begin{lemma} \label{BepsHlem}
The function~$B_\varepsilon$ is of class~$C^2([0, +\infty))$ and it satisfies~$B_\varepsilon(0) = B_\varepsilon'(0) = 0$ and~\eqref{Bpos}. Moreover
\begin{equation} \label{Bepsellgro}
\begin{aligned}
c_p \bar{\gamma} \left( \bar{\kappa} + \varepsilon + t \right)^{p - 2} t \le & B_\varepsilon'(t) \le C_p \bar{\Gamma} \left( \bar{\kappa} + \varepsilon + t \right)^{p - 2} t, \\
c_p \bar{\gamma} \left( \bar{\kappa} + \varepsilon + t \right)^{p - 2} \le & B_\varepsilon''(t) \le C_p \bar{\Gamma} \left( \bar{\kappa} + \varepsilon + t \right)^{p - 2},
\end{aligned}
\end{equation}
for any~$t > 0$, where~$\bar{\gamma}, \bar{\Gamma}$ are as in~(A)\ensuremath{'} and
$$
c_p := \min \left\{ 1, 2^{\frac{2 - p}{2}} \right\}, \, C_p := \max \left\{ 1, 2^{\frac{2 - p}{2}} \right\}.
$$
In addition, the composition~$B_\varepsilon \circ H$ is of class~$C^{1, 1}_\loc(\R^n)$ and it holds, for any~$\xi \in \R^n$,
\begin{equation} \label{BepsHell}
(B_\varepsilon \circ H)(\xi) \ge \frac{\gamma}{2 (p - 1) p} |\xi|^p - c_\star,
\end{equation}
where~$\gamma$ is as in~(A) and~$c_\star$ is a non-negative constant independent of~$\varepsilon$.
\end{lemma}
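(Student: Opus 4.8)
The plan is to establish the four assertions in order, the elementary two--sided estimate $\frac{\varepsilon+t}{\sqrt2}\le\sqrt{\varepsilon^2+t^2}\le\varepsilon+t$ (for $t\ge0$) serving throughout as the bridge between $B$ and $B_\varepsilon$.

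Write $\phi(t):=\sqrt{\varepsilon^2+t^2}$; since $\phi\in C^\infty([0,+\infty))$ and $\phi\ge\varepsilon>0$, the function $B_\varepsilon=B\circ\phi-B(\varepsilon)$ inherits the $C^{3,\beta}_\loc$ regularity of $B$ on $(0,+\infty)$, so in particular $B_\varepsilon\in C^2([0,+\infty))$. The identities $B_\varepsilon'(t)=B'(\phi(t))\,t/\phi(t)$ and $B_\varepsilon''(t)=B''(\phi(t))\,t^2/\phi(t)^2+B'(\phi(t))\,\varepsilon^2/\phi(t)^3$ give at once $B_\varepsilon(0)=B_\varepsilon'(0)=0$ and, since $B,B',B''>0$ on $(0,+\infty)$, the positivity~\eqref{Bpos} for $B_\varepsilon$. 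For~\eqref{Bepsellgro} I would plug the inequalities of~(A)$'$ — available with $\bar\kappa=\kappa$ by Proposition~\ref{(A)char} — evaluated at $\phi(t)$ into these formulae: the bounds on $B_\varepsilon'$ follow by multiplying by $t/\phi(t)$, while for $B_\varepsilon''$ the two summands recombine, because $B''(\phi(t))\ge\bar\gamma(\bar\kappa+\phi(t))^{p-2}$ and $B'(\phi(t))/\phi(t)\ge\bar\gamma(\bar\kappa+\phi(t))^{p-2}$ yield $B_\varepsilon''(t)\ge\bar\gamma(\bar\kappa+\phi(t))^{p-2}(t^2+\varepsilon^2)/\phi(t)^2=\bar\gamma(\bar\kappa+\phi(t))^{p-2}$, and symmetrically from above. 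It then remains to trade $(\bar\kappa+\phi(t))^{p-2}$ for $(\bar\kappa+\varepsilon+t)^{p-2}$: the two--sided estimate above (together with $\bar\kappa\ge\bar\kappa/\sqrt2$) gives $\tfrac1{\sqrt2}(\bar\kappa+\varepsilon+t)\le\bar\kappa+\phi(t)\le\bar\kappa+\varepsilon+t$, and raising to the power $p-2$ — distinguishing $p\ge2$ from $1<p<2$ — produces exactly the constants $c_p$ and $C_p$.

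For the regularity of $B_\varepsilon\circ H$, Lemma~\ref{derBH0} (applicable since $B_\varepsilon(0)=B_\varepsilon'(0)=0$) already gives $B_\varepsilon\circ H\in C^1(\R^n)$. On $\R^n\setminus\{0\}$ one has $(B_\varepsilon\circ H)_{ij}=B_\varepsilon''(H)H_iH_j+B_\varepsilon'(H)H_{ij}$, and using $|\nabla H|\le C$, $|\Hess(H)|\le C|\xi|^{-1}$, $C^{-1}|\xi|\le H(\xi)\le C|\xi|$ together with~\eqref{Bepsellgro} (in the form $B_\varepsilon'(t)/t\le C_p\bar\Gamma(\bar\kappa+\varepsilon+t)^{p-2}$) this expression stays bounded on every annulus $\{0<|\xi|\le R\}$. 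Since for $n\ge2$ the single point $\{0\}$ is negligible for the distributional Hessian of a $C^1$ function, this gives $B_\varepsilon\circ H\in W^{2,\infty}_\loc(\R^n)=C^{1,1}_\loc(\R^n)$.

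The one genuinely delicate point is the lower bound~\eqref{BepsHell}, with the sharp constant $\gamma$ of~(A). I would first reduce it to a statement about $B\circ H$: since $B$ is increasing and $\sqrt{\varepsilon^2+t^2}\ge t$ when $t\ge\varepsilon$, while $B(t)-B(\varepsilon)\le0\le B_\varepsilon(t)$ when $0\le t\le\varepsilon$, one has $B_\varepsilon(t)\ge B(t)-B(\varepsilon)\ge B(t)-B(1)$ for every $t\ge0$ and $\varepsilon\in(0,1)$, hence $(B_\varepsilon\circ H)(\xi)\ge(B\circ H)(\xi)-B(1)$; it then suffices to prove $(B\circ H)(\xi)\ge\frac{\gamma}{2(p-1)p}|\xi|^p-c_0$ for some $c_0\ge0$ and to set $c_\star:=c_0+B(1)$, which is manifestly independent of $\varepsilon$. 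For the latter I would expand $g(t):=(B\circ H)(t\xi)$ by Taylor with integral remainder on $[0,1]$: $g\in C^1([0,1])\cap C^2((0,1])$, $g(0)=0$, $g'(0)=0$ by Lemma~\ref{derBH0}, and the growth condition in~(A) forces $g''(t)=\xi_i\xi_j(B\circ H)_{ij}(t\xi)$ to lie in $L^1(0,1)$ — the integrability near $t=0$ being exactly what $p>1$ buys — so that $g'$ is absolutely continuous and $(B\circ H)(\xi)=\int_0^1(1-t)g''(t)\,dt\ge\gamma|\xi|^2\int_0^1(1-t)(\kappa+t|\xi|)^{p-2}\,dt$ by the ellipticity in~(A). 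Evaluating the last integral explicitly (substitution $u=\kappa+t|\xi|$) gives $\gamma\big[\tfrac{(\kappa+|\xi|)^p}{p(p-1)}-\tfrac{\kappa^{p-1}(\kappa+|\xi|)}{p-1}+\tfrac{\kappa^p}{p}\big]$; using $(\kappa+|\xi|)^p\ge|\xi|^p$ and $\kappa^{p-1}\le1$, discarding the nonnegative $\kappa^p/p$, and absorbing the linear term $\gamma|\xi|/(p-1)$ through Young's inequality $|\xi|\le\frac{|\xi|^p}{2p}+C(p)$, one is left precisely with $(B\circ H)(\xi)\ge\frac{\gamma}{2(p-1)p}|\xi|^p-c_0$. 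The case $\xi=0$ is trivial. The hard part, such as it is, lies entirely in tracking constants so that the factor $\frac1{2(p-1)p}$ emerges exactly; everything else is routine bookkeeping.
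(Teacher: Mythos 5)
Your proof is correct, and for the regularity of $B_\varepsilon$, the identities $B_\varepsilon(0)=B_\varepsilon'(0)=0$, positivity, and the derivation of~\eqref{Bepsellgro} via $\tfrac{1}{\sqrt2}(\varepsilon+t)\le\sqrt{\varepsilon^2+t^2}\le\varepsilon+t$ it coincides with the paper's argument (indeed you supply the details for~\eqref{Bepsellgro} that the paper leaves to the reader, including the clean recombination $t^2/\phi^2+\varepsilon^2/\phi^2=1$ in the bound for $B_\varepsilon''$). The one place where you genuinely diverge is~\eqref{BepsHell}. The paper first observes that $\Hess(B_\varepsilon\circ H)$ itself satisfies~(A) with $\kappa=\bar\kappa+\varepsilon$ (via~\eqref{Bepsellgro}, the uniform ellipticity of $H$ and Proposition~\ref{(A)char}), and then writes $(B_\varepsilon\circ H)(\xi)=\int_0^1\int_0^t(B_\varepsilon\circ H)_{ij}(s\xi)\xi_i\xi_j\,ds\,dt$, splitting the cases $p\ge2$ and $1<p<2$ and using Young's inequality in the latter; the $\varepsilon$-independence of $c_\star$ comes from $\bar\kappa+\varepsilon\le\bar\kappa+1$. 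You instead reduce to the unperturbed composition via the pointwise bound $B_\varepsilon(t)\ge B(t)-B(1)$ (valid since $\sqrt{\varepsilon^2+t^2}\ge t$, $B$ increasing and $\varepsilon<1$) and then run the same second-order Taylor/integral estimate on $B\circ H$ directly — note that your remainder $\int_0^1(1-t)g''(t)\,dt$ is exactly the paper's iterated integral after Fubini, so the computations agree from that point on, and your unified treatment of all $p>1$ is fine. What your route buys is that the constant in~\eqref{BepsHell} is literally the $\gamma$ of~(A) for $B\circ H$, with no need to track how the ellipticity constant of $B_\varepsilon\circ H$ relates to it through the round trip~(A)$\,\to\,$(A)$'\,\to\,$(A); the $\varepsilon$-independence of $c_\star$ is also immediate. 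Your $C^{1,1}_\loc$ argument (bounding the Hessian on punctured balls and invoking $W^{2,\infty}_\loc=C^{1,1}_\loc$, which is legitimate for a $C^1$ function smooth off a point) is marginally different from, and no less rigorous than, the paper's check of the difference quotient of the gradient at the origin.
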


\begin{proof}
It is immediate to check from definition~\eqref{Bepsdef} that~$B_\varepsilon \in C^2([0, +\infty))$. For any~$t > 0$, we compute
\begin{align*}
B_\varepsilon'(t) & = B' \left( \sqrt{\varepsilon^2 + t^2} \right) \frac{t}{\sqrt{\varepsilon^2 + t^2}}, \\
B_\varepsilon''(t) & = B'' \left( \sqrt{\varepsilon^2 + t^2} \right) \frac{t^2}{\varepsilon^2 + t^2} + B' \left( \sqrt{\varepsilon^2 + t^2} \right) \frac{\varepsilon^2}{\left( \varepsilon^2 + t^2 \right)^{3/2}}.
\end{align*}
Thus, inequalities~\eqref{Bpos} are valid and~$B_\varepsilon(0) = B_\varepsilon'(0) = 0$. Furthermore, formulae~\eqref{Bepsellgro} can be recovered from the ellipticity and growth conditions of~(A)\ensuremath{'} which~$B$ satisfies.

Then, we address the composition~$B_\varepsilon \circ H$. Notice that we already know that it is of class~$C^1$ on the whole~$\R^n$, by virtue of Lemma~\ref{derBH0}, and~$C^2$ outside of the origin, by definition. Thus we only need to check that its gradient is Lipschitz in a neighbourhood of the origin. By using~\eqref{Bepsellgro}, for any~$0 < |\xi| \le 1$ we get
$$
\frac{\left| \partial_i (B_\varepsilon \circ H)(\xi) \right|}{|\xi|} = \frac{\left| B_\varepsilon'(H(\xi)) H_i(\xi) \right|}{|\xi|} \le C_p \bar{\Gamma} (\bar{\kappa} + \varepsilon + H(\xi))^{p - 2} H_i(\xi) \frac{H(\xi)}{|\xi|} \le c,
$$
for some positive~$c$.

Finally, we establish~\eqref{BepsHell}. As a preliminary observation, we stress that the Hessian of~$B_\varepsilon \circ H$ satisfies~(A) with~$\kappa = \bar{\kappa} + \varepsilon$. This can be seen as a consequence of~\eqref{Bepsellgro}, the uniform ellipticity of~$H$ and Proposition~\ref{(A)char} (recall in particular relation~\eqref{kappabarkappa}). We consider separately the two possibilities~$p \ge 2$ and~$1 < p < 2$. In the first case, we simply compute
\begin{align*}
(B_\varepsilon \circ H)(\xi) & = \int_0^1 \int_0^t (B_\varepsilon \circ H)_{i j}(s \xi) \xi_i \xi_j \, ds dt \ge \gamma \int_0^1 \int_0^t (\kappa + s |\xi|)^{p - 2} |\xi|^2 \, ds dt \\
& \ge \gamma |\xi|^p \int_0^1 \int_0^t s^{p - 2} \, ds dt = \frac{\gamma}{(p - 1) p} |\xi|^p.
\end{align*}
If, on the other hand,~$1 < p < 2$, we have
\begin{equation} \label{techBepsH}
\begin{aligned}
(B_\varepsilon \circ H)(\xi) & = \int_0^1 \int_0^t (B_\varepsilon \circ H)_{i j}(s \xi) \xi_i \xi_j \, ds dt \ge \gamma \int_0^1 \int_0^t (\kappa + s |\xi|)^{p - 2} |\xi|^2 \, ds dt \\
& = \frac{\gamma}{p - 1} \left[ \frac{(\kappa + |\xi|)^p - \kappa^p}{p} - \kappa^{p - 1} |\xi| \right] \ge \frac{\gamma}{p - 1} \left[ \frac{|\xi|^p - \kappa^p}{p} - \kappa^{p - 1} |\xi| \right].
\end{aligned}
\end{equation}
Notice that, by Young's inequality, we estimate
$$
\kappa^{p - 1} |\xi| \le \frac{|\xi|^p}{2 p} + \frac{p - 1}{p} 2^{1/(p - 1)} \kappa^p.
$$
Plugging this into~\eqref{techBepsH} finally leads to the desired
\begin{align*}
(B_\varepsilon \circ H)(\xi) & \ge \frac{\gamma}{2 (p - 1) p} |\xi|^p - \frac{\gamma}{(p - 1) p} \left( 1 + (p - 1) 2^{1 / (p - 1)} \right) \kappa^p \\
& \ge \frac{\gamma}{2 (p - 1) p} |\xi|^p - \frac{\gamma}{(p - 1) p} \left( 1 + (p - 1) 2^{1 / (p - 1)} \right) (\bar{\kappa} + 1)^p.
\end{align*}
Hence,~\eqref{BepsHell} holds in both cases and the proof of the lemma is complete.
\end{proof}

In the following lemma we compare~$B_\varepsilon$ to~$B$. We study their modulus of continuity and discuss some uniform convergence properties.

\begin{lemma} \label{BepsBlem}
Introduce, for~$t \ge 0$, the functions~$\beta(t) := B'(t) t$,~$\beta_\varepsilon(t) := B_\varepsilon'(t) t$.

Then, the Lipschitz norms of both~$B_\varepsilon$ and~$\beta_\varepsilon$ on compact sets of~$[0, +\infty)$ are bounded by a constant independent of~$\varepsilon$. More explicitly, for any~$M \ge 1$ we estimate
\begin{equation} \label{Bbetaepslip}
\begin{aligned}
\| B_\varepsilon \|_{C^{0, 1}([0, M])} & \le \| B \|_{C^{0, 1}([0, 2 M])}, \\
\| \beta_\varepsilon \|_{C^{0, 1}([0, M])} & \le 2 \| B' \|_{C^0([0, 2 M])} + \| \beta \|_{C^{0, 1}([0, 2 M])}.
\end{aligned}
\end{equation}
Moreover,~$B_\varepsilon \rightarrow B$ and~$\beta_\varepsilon \rightarrow \beta$ uniformly on compact sets of~$[0, +\infty)$. Quantitatively, we have
\begin{equation} \label{Bbetaepsconv}
\begin{aligned}
\| B_\varepsilon - B \|_{C^0([0, M])} & \le 2 \| B' \|_{C^0([0, 2M])} \varepsilon, \\
\| \beta_\varepsilon - \beta \|_{C^0([0, M])} & \le \left( \| B' \|_{C^0([0, 2M])} + \| \beta \|_{C^{0, 1}([0, 2M])} \right) \varepsilon.
\end{aligned}
\end{equation}
\end{lemma}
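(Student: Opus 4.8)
The plan is to establish \eqref{Bbetaepslip} and \eqref{Bbetaepsconv} by direct elementary estimates, exploiting the explicit formula $B_\varepsilon(t) = B(\sqrt{\varepsilon^2 + t^2}) - B(\varepsilon)$ and the key observation that $t \le \sqrt{\varepsilon^2 + t^2} \le t + \varepsilon$ for $\varepsilon \in (0,1)$, so that $\sqrt{\varepsilon^2 + t^2} \in [0, 2M]$ whenever $t \in [0, M]$ and $M \ge 1 \ge \varepsilon$. This is exactly why the norms on the right-hand sides of \eqref{Bbetaepslip}--\eqref{Bbetaepsconv} are taken over the doubled interval $[0, 2M]$.

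First I would treat the Lipschitz bound for $B_\varepsilon$. Writing $\phi_\varepsilon(t) := \sqrt{\varepsilon^2 + t^2}$, one has $\phi_\varepsilon'(t) = t/\sqrt{\varepsilon^2+t^2} \in [0,1]$, so $\phi_\varepsilon$ is $1$-Lipschitz and maps $[0,M]$ into $[0, 2M]$; since $B_\varepsilon = B \circ \phi_\varepsilon - B(\varepsilon)$ and a constant shift does not affect Lipschitz seminorms, composition with a $1$-Lipschitz map gives $\|B_\varepsilon\|_{C^{0,1}([0,M])} \le \|B\|_{C^{0,1}([0,2M])}$ (the sup-norm part follows since $|B_\varepsilon(t)| = |B(\phi_\varepsilon(t)) - B(\varepsilon)| \le \|B\|_{C^{0,1}([0,2M])} |\phi_\varepsilon(t) - \varepsilon| \le \|B\|_{C^{0,1}([0,2M])} \cdot 2M$, but more simply one just bounds $|B_\varepsilon(t)| \le 2\|B\|_{C^0([0,2M])}$, which is absorbed into the $C^{0,1}$ norm). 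For $\beta_\varepsilon(t) = B_\varepsilon'(t)\,t = B'(\phi_\varepsilon(t))\, t^2/\phi_\varepsilon(t)$, I would first observe from the formula $B_\varepsilon'(t) = B'(\phi_\varepsilon(t))\, t/\phi_\varepsilon(t)$ (computed in Lemma~\ref{BepsHlem}) that $\beta_\varepsilon(t) = B'(\phi_\varepsilon(t))\, t^2/\phi_\varepsilon(t) = B'(\phi_\varepsilon(t))\,\big(\phi_\varepsilon(t) - \varepsilon^2/\phi_\varepsilon(t)\big)$, i.e.
\begin{equation*}
\beta_\varepsilon(t) = B'(\phi_\varepsilon(t))\,\phi_\varepsilon(t) - B'(\phi_\varepsilon(t))\,\frac{\varepsilon^2}{\phi_\varepsilon(t)} = (\beta \circ \phi_\varepsilon)(t) - B'(\phi_\varepsilon(t))\,\frac{\varepsilon^2}{\phi_\varepsilon(t)}.
\end{equation*}
The first term is the composition of $\beta$ with the $1$-Lipschitz map $\phi_\varepsilon$, contributing $\|\beta\|_{C^{0,1}([0,2M])}$. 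For the second term one checks that $g_\varepsilon(t) := \varepsilon^2/\phi_\varepsilon(t)$ satisfies $0 \le g_\varepsilon \le \varepsilon \le 1$ and $|g_\varepsilon'(t)| = \varepsilon^2 t / \phi_\varepsilon(t)^3 \le 1$, so the product $B'(\phi_\varepsilon(\cdot))\,g_\varepsilon(\cdot)$ has Lipschitz norm on $[0,M]$ bounded by $\|B'\|_{C^0([0,2M])}$ plus a term coming from the Lipschitz bound on $t\mapsto B'(\phi_\varepsilon(t))$; collecting constants carefully yields the stated bound $2\|B'\|_{C^0([0,2M])} + \|\beta\|_{C^{0,1}([0,2M])}$.

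For the convergence estimates \eqref{Bbetaepsconv} I would use $|B_\varepsilon(t) - B(t)| = |B(\phi_\varepsilon(t)) - B(t) - B(\varepsilon)| \le \|B\|_{C^{0,1}([0,2M])}\big(|\phi_\varepsilon(t) - t| + \varepsilon\big) \le 2\|B'\|_{C^0([0,2M])}\varepsilon$, since $0 \le \phi_\varepsilon(t) - t \le \varepsilon$ and on $[0,2M]$ the Lipschitz norm of $B$ equals $\|B'\|_{C^0([0,2M])}$ (as $B \in C^1$). Similarly $|\beta_\varepsilon(t) - \beta(t)| \le |(\beta\circ\phi_\varepsilon)(t) - \beta(t)| + B'(\phi_\varepsilon(t))\,\varepsilon^2/\phi_\varepsilon(t) \le \|\beta\|_{C^{0,1}([0,2M])}\,\varepsilon + \|B'\|_{C^0([0,2M])}\,\varepsilon$, using the decomposition above together with $g_\varepsilon \le \varepsilon$.

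The only mildly delicate point — and the one I would be most careful about — is the bookkeeping of constants in the Lipschitz bound for $\beta_\varepsilon$: one must verify that the auxiliary function $g_\varepsilon(t) = \varepsilon^2/\sqrt{\varepsilon^2+t^2}$ is genuinely $1$-Lipschitz uniformly in $\varepsilon$ and bounded by $1$, and that the Lipschitz constant of $t \mapsto B'(\phi_\varepsilon(t))$ on $[0,M]$ does not exceed that of $B'$ on $[0,2M]$ (which again follows from $\phi_\varepsilon$ being $1$-Lipschitz with image in $[0,2M]$, but requires $B' \in C^{0,1}_{\loc}$, i.e. the local $C^2$ regularity of $B$ away from the origin combined with $B \in C^1([0,+\infty))$). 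Everything else is a routine application of the triangle inequality and the composition rule for Lipschitz functions; no deeper structure of $H$ or of assumption~(A)\ensuremath{'} is needed here, as these estimates concern $B$ alone.
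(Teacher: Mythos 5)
Your treatment of the Lipschitz bound for $B_\varepsilon$ and of both convergence estimates \eqref{Bbetaepsconv} is correct and essentially the same as the paper's (everything reduces to the fact that $\phi_\varepsilon(t)=\sqrt{\varepsilon^2+t^2}$ is $1$-Lipschitz, maps $[0,M]$ into $[0,2M]$, and satisfies $0\le\phi_\varepsilon(t)-t\le\varepsilon$). The gap is exactly at the point you flag as ``mildly delicate'': your bound for the Lipschitz seminorm of the correction term $h_\varepsilon(t):=B'(\phi_\varepsilon(t))\,g_\varepsilon(t)$ uses the product rule together with a Lipschitz bound on $t\mapsto B'(\phi_\varepsilon(t))$, and you justify the latter by claiming $B'\in C^{0,1}_{\loc}([0,+\infty))$ as a consequence of $B\in C^2((0,+\infty))\cap C^1([0,+\infty))$. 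That implication is false, and it fails precisely in the singular regime the lemma is designed for: under assumption (A)\ensuremath{'} with $1<p<2$ and $\bar\kappa=0$ (e.g.\ $B(t)=t^p/p$) one has $B''(t)\sim t^{p-2}\to+\infty$ as $t\to0^+$, so $B'$ is only $(p-1)$-H\"older at the origin and the Lipschitz constant of $B'\circ\phi_\varepsilon$ on $[0,M]$ blows up like $\varepsilon^{p-2}$ as $\varepsilon\to0^+$. Consequently the term $[B'\circ\phi_\varepsilon]_{C^{0,1}}\,\|g_\varepsilon\|_\infty$ is not controlled by $\|B'\|_{C^0([0,2M])}$, and ``collecting constants'' as described does not produce the stated $\varepsilon$-independent bound (one can rescue it via (A)\ensuremath{'}, since $\|g_\varepsilon\|_\infty\le\varepsilon$ gives $B''(\phi_\varepsilon)\,\phi_\varepsilon'\,g_\varepsilon=O(\bar\Gamma\varepsilon^{p-1})$, but then the constant involves $\bar\Gamma$ rather than $\|B'\|_{C^0}$ and requires the structural hypothesis you claim is not needed).

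The fix is to never differentiate $B'$: estimate the increment of the product by splitting it so that $B'$ only ever appears undifferentiated, multiplied by the increment of an explicit rational function of $t$. This is what the paper does, writing $\beta_\varepsilon=(\beta\circ\phi_\varepsilon)\cdot\frac{t^2}{\varepsilon^2+t^2}$ and bounding, for $s\le t$,
\begin{equation*}
|\beta_\varepsilon(t)-\beta_\varepsilon(s)|\le B'(\phi_\varepsilon(t))\,\phi_\varepsilon(t)\left|\frac{t^2}{\varepsilon^2+t^2}-\frac{s^2}{\varepsilon^2+s^2}\right|+\frac{s^2}{\varepsilon^2+s^2}\,\bigl|\beta(\phi_\varepsilon(t))-\beta(\phi_\varepsilon(s))\bigr|,
\end{equation*}
where the first term is $\le 2\|B'\|_{C^0([0,2M])}|t-s|$ because $\phi_\varepsilon(t)\left|\frac{t^2}{\varepsilon^2+t^2}-\frac{s^2}{\varepsilon^2+s^2}\right|\le\frac{t^2-s^2}{\phi_\varepsilon(t)}\le2(t-s)$. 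Your additive decomposition $\beta_\varepsilon=\beta\circ\phi_\varepsilon-h_\varepsilon$ can be repaired in the same spirit by rewriting $h_\varepsilon=\frac{\varepsilon^2}{\varepsilon^2+t^2}\,(\beta\circ\phi_\varepsilon)$ and splitting that product analogously (this yields a uniform, though slightly larger, constant). As written, however, the step bounding $[h_\varepsilon]_{C^{0,1}}$ does not go through.
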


\begin{proof}
First of all, we stress that, while~$\beta_\varepsilon \in C^1([0, +\infty))$ in view of Lemma~\ref{BepsHlem}, the same is true also for~$\beta$, as one can easily deduce from hypothesis~(A)\ensuremath{'}.

We begin to establish~\eqref{Bbetaepslip}. It is easy to see that the~$C^0$ norms of~$B_\varepsilon$ and~$\beta_\varepsilon$ are bounded by those of~$B$ and~$\beta$ respectively. Thus, we may concentrate on the estimates of their Lipschitz seminorms. Let~$M \ge 1$ and~$0 \le s, t \le M$. We have
\begin{align*}
|B_\varepsilon(t) - B_\varepsilon(s)| & = \left| B \left( \sqrt{\varepsilon^2 + t^2} \right) - B \left( \sqrt{\varepsilon^2 + s^2} \right) \right| \\
& \le \| B \|_{C^{0, 1}([0, 2M])} \left| \sqrt{\varepsilon^2 + t^2} - \sqrt{\varepsilon^2 + s^2} \right| \\
& \le \| B \|_{C^{0, 1}([0, 2M])} |t - s|,
\end{align*}
so that the first relation in~\eqref{Bbetaepslip} is proved. The second inequality needs a little more care. Assuming without loss of generality~$s \le t$, we compute
\begin{align*}
|\beta_\varepsilon(t) - \beta_\varepsilon(s)| & = \left| B' \left( \sqrt{\varepsilon^2 + t^2} \right) \frac{t^2}{\sqrt{\varepsilon^2 + t^2}} - B' \left( \sqrt{\varepsilon^2 + s^2} \right) \frac{s^2}{\sqrt{\varepsilon^2 + s^2}} \right| \\
& \le B' \left( \sqrt{\varepsilon^2 + t^2} \right) \sqrt{\varepsilon^2 + t^2} \left| \frac{t^2}{\varepsilon^2 + t^2} - \frac{s^2}{\varepsilon^2 + s^2} \right| \\
& \quad + \frac{s^2}{\varepsilon^2 + s^2} \left| \beta \left( \sqrt{\varepsilon^2 + t^2} \right) - \beta \left( \sqrt{\varepsilon^2 + s^2} \right) \right| \\
& \le \| B' \|_{C^0([0, 2 M])} \frac{|t^2 - s^2|}{\sqrt{\varepsilon^2 + t^2}} + \| \beta \|_{C^{0, 1}([0, 2M])} \left| \sqrt{\varepsilon^2 + t^2} - \sqrt{\varepsilon^2 + s^2} \right| \\
& \le \left( 2 \| B' \|_{C^0([0, 2 M])} + \| \beta \|_{C^{0, 1}([0, 2M])} \right) |t - s|.
\end{align*}

Estimates~\eqref{Bbetaepsconv} are proved in a similar fashion. Indeed, for any~$0 \le t \le M$,
\begin{align*}
\left| B_\varepsilon(t) - B(t) \right| & = \left| B \left( \sqrt{\varepsilon^2 + t^2} \right) - B(\varepsilon) - B(t) \right| \\
& \le \| B \|_{C^{0, 1}([0, 2 M])} \left( \left| \sqrt{\varepsilon^2 + t^2} - t \right| + \varepsilon \right) \\
& \le 2 \| B \|_{C^{0, 1}([0, 2 M])} \varepsilon,
\end{align*}
and
\begin{align*}
\left| \beta_\varepsilon(t) - \beta(t) \right| & \le B' \left( \sqrt{\varepsilon^2 + t^2} \right) \left| \frac{t^2}{\sqrt{\varepsilon^2 + t^2}} - \sqrt{\varepsilon^2 + t^2} \right| + \left| \beta \left( \sqrt{\varepsilon^2 + t^2} \right) - \beta(t) \right| \\
& \le \left( \| B' \|_{C^0([0, 2 M])} + \| \beta \|_{C^{0, 1}([0, 2 M])} \right) \varepsilon.
\end{align*}
Thus, the proof is complete.
\end{proof}

Next is the key proposition of the approximation argument. Basically, we consider some perturbed problems driven by~$B_\varepsilon$. We prove that their solutions are~$H^2$ regular and that they converge to~$u$.

\begin{proposition} \label{uepsexis}
Let~$\Omega$ be a bounded open set of~$\R^n$ with~$C^{1, \alpha}$ boundary. The problem
\begin{equation} \label{uepsprob}
\begin{cases}
\dive \big( B_\varepsilon'(H(\nabla u^\varepsilon)) \nabla H(\nabla u^\varepsilon) \big) + F'(u) = 0, & \qquad \mbox{in } \Omega, \\
u^\varepsilon = u, & \qquad \mbox{on } \partial \Omega,
\end{cases}
\end{equation}
admits a strong solution~$u^\varepsilon \in C^{1, \alpha'}(\overline{\Omega}) \cap H^2(\Omega)$, for some~$\alpha' \in (0, 1]$ independent of~$\varepsilon$. Furthermore,~$u^\varepsilon$ converges to~$u$ in~$C^1(\overline{\Omega})$, as~$\varepsilon \rightarrow 0^+$.
\end{proposition}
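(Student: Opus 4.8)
The plan is to reduce~\eqref{uepsprob} to a uniformly elliptic problem with bounded, Lipschitz coefficients, for which existence and a priori estimates are classical, and then to undo the reduction by means of the pointwise gradient bound of~\cite[Theorem~1.1]{CFV14}. Concretely, recall from Proposition~\ref{ureg} that~$\nabla u \in L^\infty(\R^n)$. One fixes a truncation level~$M > 0$ (large, depending only on~$n$,~$p$, the structure constants,~$\|u\|_{L^\infty}$,~$\|F'(u)\|_{L^\infty(\Omega)}$ and~$\Omega$; its precise size is dictated by the third step below) and replaces~$B$ by the capped function~$\hat{B}$ of Lemma~\ref{Bcaplem}, which fulfils~(A)\ensuremath{'} with~$p = 2$. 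Building~$\hat{B}_\varepsilon$ out of~$\hat{B}$ as in~\eqref{Bepsdef} and invoking Lemma~\ref{BepsHlem} (with~$\hat{B}$ in place of~$B$, hence with exponent~$2$) together with Proposition~\ref{(A)char}, the composition~$\hat{B}_\varepsilon \circ H$ lies in~$C^{1,1}_\loc(\R^n)$, vanishes together with its gradient at the origin by Lemma~\ref{derBH0}, and has a Hessian which is uniformly elliptic and bounded, with ellipticity constants~$\gamma', \Gamma'$ depending on~$M$ but \emph{not} on~$\varepsilon \in (0,1)$; integrating the lower bound twice one also gets~$(\hat{B}_\varepsilon \circ H)(\xi) \ge \tfrac{\gamma'}{2} |\xi|^2$.

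For the truncated problem the natural approach is the direct method: minimise~$J_\varepsilon(v) := \int_\Omega (\hat{B}_\varepsilon \circ H)(\nabla v) - F'(u)\, v \, dx$ over the affine class~$\{ v \in H^1(\Omega) : v - u \in H^1_0(\Omega) \}$, which is non-empty since~$u \in C^{1,\alpha}(\overline\Omega)$ by Proposition~\ref{ureg}. The quadratic lower bound on~$\hat{B}_\varepsilon \circ H$, the Poincar\'e inequality and~$F'(u) \in L^\infty(\Omega)$ make~$J_\varepsilon$ coercive, while strict convexity of~$\hat{B}_\varepsilon \circ H$ makes it strictly convex and weakly lower semicontinuous; hence there is a unique minimiser~$u^\varepsilon$, which is the unique weak solution of~$\dive(\hat{B}_\varepsilon'(H(\nabla u^\varepsilon)) \nabla H(\nabla u^\varepsilon)) + F'(u) = 0$ in~$\Omega$ with~$u^\varepsilon = u$ on~$\partial\Omega$. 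Since the operator is now uniformly elliptic with Lipschitz coefficients and the source lies in~$L^\infty(\Omega)$, the Dirichlet regularity theory of~\cite{LU68} yields~$u^\varepsilon \in C^{1,\alpha'}(\overline\Omega)$, with~$\alpha'$ and~$\|u^\varepsilon\|_{C^{1,\alpha'}(\overline\Omega)}$ controlled by the data (and by~$M$) but uniformly in~$\varepsilon$; in particular~$\|u^\varepsilon\|_{L^\infty(\Omega)}$ is bounded uniformly in~$\varepsilon$, as one also sees by comparison with suitable paraboloids. The~$H^2$ regularity then follows by the difference-quotient method, exploiting the uniform ellipticity and the~$C^{1,1}_\loc$ regularity of~$\hat{B}_\varepsilon \circ H$ (in the interior this is immediate; up to~$\partial\Omega$ one flattens the boundary, differentiates tangentially and recovers the normal second derivative from the equation).

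It remains to remove the truncation and pass to the limit. The operator~$\xi \mapsto \hat{B}_\varepsilon'(H(\xi)) \nabla H(\xi)$ is of structure~(A) with~$p = 2$, so~\cite[Theorem~1.1]{CFV14} applies to~$u^\varepsilon$ and bounds~$\|\nabla u^\varepsilon\|_{L^\infty(\Omega)}$; the crucial point is that this estimate only involves the values of~$\hat{B}_\varepsilon$ on the range~$[0, H(\nabla u^\varepsilon)]$, where~$\hat{B}_\varepsilon$ coincides with~$B_\varepsilon$ as long as~$\sqrt{\varepsilon^2 + H(\nabla u^\varepsilon)^2} \le M$, so that the resulting constant~$C_0$ depends only on the original structure, on~$\sup_{\varepsilon} \|u^\varepsilon\|_{L^\infty}$ and on~$\Omega$, and not on~$M$. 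Choosing~$M$ larger than, say,~$2(1 + C_\ast C_0)$, where~$C_\ast$ is such that~$H(\xi) \le C_\ast |\xi|$, one gets~$\sqrt{\varepsilon^2 + H(\nabla u^\varepsilon)^2} < M$ throughout~$\Omega$; hence~$\hat{B}_\varepsilon$ and~$B_\varepsilon$ agree along the graph of~$u^\varepsilon$, and~$u^\varepsilon$ is genuinely a solution of~\eqref{uepsprob}. Finally, the uniform~$C^{1,\alpha'}(\overline\Omega)$ bound and Arzel\`a--Ascoli give, along any sequence~$\varepsilon_j \to 0^+$, a subsequence with~$u^{\varepsilon_j} \to v$ in~$C^1(\overline\Omega)$, with~$v = u$ on~$\partial\Omega$; using that~$\nabla(\hat{B}_\varepsilon \circ H) \to \nabla(B \circ H)$ locally uniformly on~$\R^n$ (clear from the formulae in Lemma~\ref{BepsHlem}, or from Lemma~\ref{BepsBlem}) one passes to the limit in the weak formulation and finds that~$v$ weakly solves~\eqref{eleq} in~$\Omega$ with the frozen source~$F'(u)$ and~$v = u$ on~$\partial\Omega$; since~$w \mapsto \int_\Omega B(H(\nabla w)) - F'(u)\, w$ is strictly convex ($B \circ H$ being strictly convex under~(A)), this Dirichlet problem has a unique solution, which is~$u$ itself, whence~$v = u$ and, by the subsequence principle, the whole family~$u^\varepsilon$ converges to~$u$ in~$C^1(\overline\Omega)$.

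The main obstacle is precisely the consistent choice of the single truncation level~$M$: it must be fixed before any estimate on~$\nabla u^\varepsilon$ is available, and the argument closes only because the gradient bound furnished by~\cite[Theorem~1.1]{CFV14} for the truncated problem does not deteriorate as~$M \to +\infty$ — which is exactly the reason for capping~$B$ far from the origin while leaving it unchanged near it, as anticipated when Lemma~\ref{Bcaplem} was introduced. Checking this~$M$-independence (so that the cap is never active) is the delicate part; everything else is a routine combination of the direct method and standard uniformly elliptic regularity theory.
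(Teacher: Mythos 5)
There is a genuine gap in the step where you remove the truncation. You invoke \cite[Theorem~1.1]{CFV14} to obtain a bound on~$\|\nabla u^\varepsilon\|_{L^\infty(\Omega)}$ that is independent of the cap level~$M$. That theorem is a Modica-type \emph{pointwise} gradient estimate (of the form~$B'(H(\nabla v))H(\nabla v)-B(H(\nabla v))\le c_v-F(v)$) valid for bounded \emph{entire} solutions of the autonomous equation~\eqref{eleq} on all of~$\R^n$; it does not apply to the Dirichlet problem~\eqref{uepsprob}, which is posed on a bounded domain, carries boundary data, and has the \emph{frozen}, $x$-dependent source~$F'(u(x))$ rather than~$F'(u^\varepsilon)$. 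Without an~$M$-independent gradient bound you cannot choose the single truncation level~$M$ consistently, and since your~$u^\varepsilon$ is constructed as a solution of the \emph{capped} equation, you cannot conclude that it solves~\eqref{uepsprob}. The only uniform gradient control actually available here is the global~$C^{1,\alpha'}(\overline\Omega)$ estimate for the Dirichlet problem (Lieberman, \cite{L88}), and for the capped operator its constants depend on the ellipticity ratio of~$\hat B_\varepsilon$, hence on~$M$ when~$p\ne 2$ --- so the loop does not close.

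The paper avoids this circularity by reversing the order of operations. One minimizes the functional with the \emph{untruncated}~$B_\varepsilon$ directly in~$W^{1,p}(\Omega)$ (coercivity coming from~\eqref{BepsHell}, not from a quadratic lower bound); since~$B_\varepsilon\circ H$ already satisfies hypothesis~(A) with~$\kappa=\bar\kappa+\varepsilon$ and with constants independent of~$\kappa$ (Lemma~\ref{BepsHlem} and Proposition~\ref{(A)char}), Stampacchia/Ladyzhenskaya--Ural'tseva give a uniform~$L^\infty$ bound and \cite[Theorem~1]{L88} gives~$u^\varepsilon\in C^{1,\alpha'}(\overline\Omega)$ with norm and exponent uniform in~$\varepsilon$. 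Only \emph{after} this, and only in the case~$1<p<2$, is the cap of Lemma~\ref{Bcaplem} introduced, with~$M:=\|\nabla u^\varepsilon\|_{L^\infty(\overline\Omega)}$ already known, purely as a device to verify condition~(2.4) of \cite[Proposition~1]{T84} for the~$H^2$ regularity; since the gradient of~$u^\varepsilon$ never reaches the capped region, $u^\varepsilon$ remains a solution of the capped equation and no un-capping argument is needed. Your convergence step (Arzel\`a--Ascoli plus uniqueness of the limiting Dirichlet problem) matches the paper's, but the existence and regularity part needs to be reorganized along these lines to be correct.
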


\begin{proof}
By using standard methods - see, for instance,~\cite[Theorem~3.30]{D07} - we know that the functional
$$
\F_\varepsilon(v) := \int_{\Omega} B_\varepsilon(H(\nabla v(x))) - F'(u(x)) v(x) \, dx,
$$
admits the existence of a minimizer~$u^\varepsilon \in W^{1, p}(\Omega)$, with~$u^\varepsilon - u \in W_0^{1, p}(\Omega)$. Note that~$\F_\varepsilon$ is coercive, thanks to~\eqref{BepsHell}, the continuity of~$F'$ and the boundedness of~$u$. Clearly,~$u^\varepsilon$ satisfies~\eqref{uepsprob} in the weak sense.

In view of~\eqref{BepsHell}, we see that the minimizer~$u^\varepsilon$ is bounded in~$\Omega$ (use e.g.~\cite[Theorems~6.1-6.2]{S63} or~\cite[Theorem 3.2, p.~328]{LU68}). Moreover, the~$L^\infty$ norm of~$u^\varepsilon$ is uniform in~$\varepsilon$.

With this in hand, we can now verify that~$u^\varepsilon \in C^{1, \alpha'}$.
For this, we notice that Lemma~\ref{BepsHlem} and Proposition~\ref{(A)char} ensure that hypothesis~(A) is verified by~$B_\varepsilon \circ H$. Hence, by the uniform~$L^\infty$ estimates, we may appeal to~\cite[Theorem~1]{L88} to deduce that~$u^\varepsilon \in C^{1, \alpha'}(\overline{\Omega})$, for some~$\alpha' \in (0, 1]$. Notice that~$\alpha'$ is independent of~$\varepsilon$ and~$\| u^\varepsilon \|_{C^{1, \alpha'}(\overline{\Omega})}$ is uniformly bounded in~$\varepsilon$.

Consequently, by Arzelà-Ascoli Theorem, the sequence $\{ u^\varepsilon \}$ converges in $C^1(\overline{\Omega})$ to a function $v$, as $\varepsilon \rightarrow 0^+$. With the aid of Lemma~\ref{BepsBlem}, we see that~$v$ is the unique solution of
$$
\begin{cases}
\dive \left( B'(H(\nabla v)) \nabla H(\nabla v) \right) + F'(u) = 0, & \qquad \mbox{in } \Omega, \\
v = u, & \qquad \mbox{on } \partial \Omega.
\end{cases}
$$
Therefore,~$v = u$ in the whole~$\overline{\Omega}$.

Now we prove the~$H^2$ regularity of~$u^\varepsilon$. To this aim we employ~\cite[Proposition~1]{T84}. Notice that we need to check the validity of condition~(2.4) there, in order to apply such result. If~$p \ge 2$ it is an immediate consequence of the fact that~$B_\varepsilon \circ H$ satisfies~(A). Indeed, for any~$\eta \in \R^n \setminus \{ 0 \}$,~$\zeta \in \R^n$, we deduce that
$$
\left[ \Hess \,(B_\varepsilon \circ H)(\xi) \right]_{i j} \zeta_i \zeta_j \ge \gamma {(\bar{\kappa} + \varepsilon + |\xi|)}^{p - 2} {|\zeta|}^2 \ge \tilde{\gamma} {|\zeta|}^2,
$$
for some~$\tilde{\gamma} > 0$. In case~$1 < p < 2$, we set~$M := \| \nabla u^\varepsilon \|_{L^\infty(\overline{\Omega})}$ and modify~$B_\varepsilon$ accordingly to Lemma~\ref{Bcaplem}. The new function~$\hat{B}_\varepsilon$ obtained this way satisfies assumption~(A)\ensuremath{'}, and thus~(A), with~$p = 2$. Moreover,~$u^\varepsilon$ is still a weak solution to~\eqref{uepsprob} with~$B_\varepsilon$ replaced by~$\hat{B}_\varepsilon$. This is enough to conclude that~$u^\varepsilon \in H^2(\Omega)$ also when~$1 < p < 2$.

From the additional Sobolev regularity we deduce that~$u^\varepsilon$ is actually a strong solution of~\eqref{uepsprob}. Indeed, it is sufficient to observe that, for any~$i = 1, \ldots, n$,
$$
B_\varepsilon'(H(\nabla u^\varepsilon)) H_i(\nabla u^\varepsilon) = \left( B_\varepsilon \circ H \right)_i (\nabla u^\varepsilon) \in H^1(\Omega),
$$
being~$(B_\varepsilon \circ H)_i$ locally uniformly Lipschitz, by Lemma~\ref{BepsHlem}. 
\end{proof}

After all these preliminary results, we may finally head to the

\begin{proof}[Proof of Theorem~\ref{monformthm}]
First, using the coarea formula we compute
$$
\E'(R) = \frac{1 - n}{R} \E(R) + \frac{1}{R^{n - 1}} \int_{\partial W_R} \left[ B(H(\nabla u)) + G(u) \right] |\nabla H^*|^{-1} \, d\H^{n - 1}.
$$
Then, notice that the exterior unit normal vector to~$\partial W_R$ at~$x \in \partial W_R$ is given by
\begin{equation} \label{nu}
\nu(x) = \frac{\nabla H^*(x)}{|\nabla H^*(x)|}.
\end{equation}
Thus, by the homogeneity of~$H$ and the second identity in~\eqref{CS2} we have
$$
H(\nu(x)) = |\nabla H^*(x)|^{-1} H(\nabla H^*(x)) = |\nabla H^*(x)|^{-1}.
$$
As a consequence, the derivative of~$\E$ at~$R$ becomes
\begin{equation} \label{montech1}
\E'(R) = \frac{1 - n}{R} \E(R) + \frac{1}{R^{n - 1}} \int_{\partial W_R} \left[ B(H(\nabla u)) + G(u) \right] H(\nu) \, d\H^{n - 1}.
\end{equation}

For any~$\varepsilon \in (0, 1)$, let now~$u^\varepsilon \in C^{1, \alpha'}(\overline{W_R}) \cap H^2(W_R)$ be the strong solutions of~\eqref{uepsprob}, with~$\Omega = W_R$. Notice that~$\partial W_R$ is of class~$C^2$ in view of Lemma~\ref{H*reg}. Hence, we are allowed to apply Proposition~\ref{uepsexis} to obtain such a~$u^\varepsilon$. By the results of Proposition~\ref{uepsexis} and Lemma~\ref{BepsBlem}, along with the~$C^2$ regularity of~$G$, it is immediate to check that
\begin{equation} \label{BHuconv}
\begin{aligned}
B_\varepsilon(H(\nabla u^\varepsilon)) & \longrightarrow B(H(\nabla u)), \\
B_\varepsilon'(H(\nabla u^\varepsilon)) H(\nabla u^\varepsilon) & \longrightarrow B'(H(\nabla u)) H(\nabla u), \\
G(u^\varepsilon) \longrightarrow G(u) \, \, & \, \mbox{and} \, \, \, F'(u^\varepsilon) \longrightarrow F'(u),
\end{aligned}
\end{equation}
uniformly on~$\overline{W_R}$.

In view of Lemma~\ref{H*reg} the function~$H \nabla H$ is bijective and its inverse is given by~$H^* \nabla H^*$. Hence, exploiting the homogeneity properties of~$H$ and~$\nabla H$ together with~\eqref{CS2}, it follows that the identity 
\begin{align*}
x & = H(H^*(x) \nabla H^*(x)) \nabla H(H^*(x) \nabla H^*(x)) = H^*(x) H(\nabla H^*(x)) \nabla H(\nabla H^*(x)) \\
& = H^*(x) \nabla H(\nabla H^*(x)),
\end{align*}
is true for any~$x \in \R^n \setminus \{ 0 \}$. Consequently, using~\eqref{i},~\eqref{nu}, the homogeneity of~$\nabla H$, the definition of~$\partial W_R$ and the divergence theorem, we compute
\begin{align*}
& \int_{\partial W_R} B_\varepsilon(H(\nabla u^\varepsilon)) H(\nu) \, d\H^{n - 1} = \frac{1}{R} \int_{\partial W_R} B_\varepsilon(H(\nabla u^\varepsilon)) H^* \langle \nabla H(\nu), \nu \rangle \, d\H^{n - 1} \\
& \qquad = \frac{1}{R} \int_{W_R} \dive \big( B_\varepsilon(H(\nabla u^\varepsilon)) H^* \nabla H(\nabla H^*) \big) \, dx = \frac{1}{R} \int_{W_R} \dive \big( B_\varepsilon(H(\nabla u^\varepsilon)) x \big) \, dx \\
& \qquad = \frac{1}{R} \int_{W_R} B_\varepsilon'(H(\nabla u^\varepsilon)) H_j(\nabla u^\varepsilon) u^\varepsilon_{i j} x_i \, dx + \frac{n}{R} \int_{W_R} B_\varepsilon(H(\nabla u^\varepsilon)) \, dx.
\end{align*}
With a completely analogous argument we also deduce that
$$
\int_{\partial W_R} G(u^\varepsilon) H(\nu) \, d\H^{n - 1} = - \frac{1}{R} \int_{W_R} F'(u^\varepsilon) u^\varepsilon_i x_i \, dx + \frac{n}{R} \int_{W_R} G(u^\varepsilon) \, dx.
$$
Putting these last two identities together we obtain
\begin{equation} \label{montech34}
\int_{\partial W_R} \left[ B_\varepsilon(H(\nabla u^\varepsilon)) + G(u^\varepsilon) \right] H(\nu) \, d\H^{n - 1} = \frac{n}{R} \int_{W_R} B_\varepsilon(H(\nabla u^\varepsilon)) + G(u^\varepsilon) \, dx + \frac{I_\varepsilon}{R},
\end{equation}
where
$$
I_\varepsilon := \int_{W_R} \left[ B_\varepsilon'(H(\nabla u^\varepsilon)) H_j(\nabla u^\varepsilon) u^\varepsilon_{i j} - F'(u^\varepsilon) u^\varepsilon_i \right] x_i \, dx.
$$
Recalling that~$u^\varepsilon$ is a strong solution of~\eqref{uepsprob}, we compute
\begin{align*}
I_\varepsilon & = \int_{W_R} \left[ \big(  B_\varepsilon'(H(\nabla u^\varepsilon)) H_j(\nabla u^\varepsilon) u^\varepsilon_i \big)_j - \left( B_\varepsilon'(H(\nabla u^\varepsilon)) H_j(\nabla u^\varepsilon) \right)_j u^\varepsilon_i - F'(u^\varepsilon) u^\varepsilon_i \right] x_i \, dx \\
& = \int_{W_R} \big(  B_\varepsilon'(H(\nabla u^\varepsilon)) H_j(\nabla u^\varepsilon) u^\varepsilon_i \big)_j x_i \, dx + \int_{W_R} \left[ F'(u) - F'(u^\varepsilon) \right] u^\varepsilon_i x_i \, dx.
\end{align*}
By the divergence theorem, formulae~\eqref{i},~\eqref{nu} and condition~\eqref{FKweak} we find
\begin{equation} \label{montech5}
\begin{aligned}
I_\varepsilon & = \int_{W_R} \left( B_\varepsilon'(H(\nabla u^\varepsilon)) H_j(\nabla u^\varepsilon) u^\varepsilon_i x_i \right)_j \, dx - \int_{W_R} B_\varepsilon'(H(\nabla u^\varepsilon)) H_j(\nabla u^\varepsilon) u^\varepsilon_i \delta_{i j} \, dx \\
& \quad + \int_{W_R} \left[ F'(u) - F'(u^\varepsilon) \right] u^\varepsilon_i x_i \, dx \\
& = \int_{\partial W_R} \frac{B_\varepsilon'(H(\nabla u^\varepsilon))}{|\nabla H^*|} \langle \nabla H(\nabla u^\varepsilon), \nabla H^* \rangle \langle \nabla u^\varepsilon, x \rangle \, d\H^{n - 1} \\
& \quad - \int_{W_R} B_\varepsilon'(H(\nabla u^\varepsilon)) \langle \nabla H(\nabla u^\varepsilon), \nabla u^\varepsilon \rangle \, dx + \int_{W_R} \left[ F'(u) - F'(u^\varepsilon) \right] \langle \nabla u^\varepsilon, x \rangle \, dx \\
& \ge - \int_{W_R} B_\varepsilon'(H(\nabla u^\varepsilon)) H(\nabla u^\varepsilon) \, dx + \int_{W_R} \left[ F'(u) - F'(u^\varepsilon) \right] \langle \nabla u^\varepsilon, x \rangle \, dx.
\end{aligned}
\end{equation}
Taking the limit as~$\varepsilon \rightarrow 0^+$ in~\eqref{montech34} and~\eqref{montech5}, by~\eqref{BHuconv} we obtain 
\begin{align*}
\int_{\partial W_R} \left[ B(H(\nabla u)) + G(u) \right] H(\nu) \, d\H^{n - 1} & \ge \frac{n}{R} \int_{W_R} B(H(\nabla u)) + G(u) \, dx \\
& \quad - \frac{1}{R} \int_{W_R} B'(H(\nabla u)) H(\nabla u) \, dx.
\end{align*}
By plugging this last identity in~\eqref{montech1} and recalling~\eqref{scaledWen} we finally get
\begin{align*}
\E'(R) & \ge \frac{1}{R^n} \int_{W_R} B(H(\nabla u)) + G(u) - B'(H(\nabla u)) H(\nabla u) \, dx.
\end{align*}
The result now follows since the integral on the right hand side is non-negative by virtue of~\cite[Theorem~1.1]{CFV14}.
\end{proof}

\section{The Liouville-type theorem} \label{liousec}

Here we prove Theorem~\ref{liouthm}. In order to obtain that~$u$ is constant, our first goal is to show that, thanks to the gradient estimate contained in~\cite[Theorem~1.1]{CFV14}, the gradient term in~\eqref{scaledWen} is bounded by the potential. Then, the monotonicity formula of Theorem~\ref{monformthm} and the growth assumption on~$G(u)$ conclude the argument.

The following general result allows us to accomplish the first step.

\begin{lemma} \label{B'tBgeBlem}
Let~$B \in C^2(0, +\infty) \cap C^1([0, +\infty))$ be a function satisfying~\eqref{Bpos} and~$B(0) = B'(0) = 0$. Assume in addition that~$B$ satisfies either~(A)\ensuremath{'} or~(B)\ensuremath{'}. Then, for any~$K > 0$ there exists a constant~$\delta > 0$ such that
\begin{equation} \label{B'tBgeB}
B'(t) t - B(t) \ge \delta B(t),
\end{equation}
for any~$t \in [0, K]$. In particular, under assumption~(A)\ensuremath{'}, inequality~\eqref{B'tBgeB} holds for any~$t \ge 0$.
\end{lemma}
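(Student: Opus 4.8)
### Proof proposal for Lemma~\ref{B'tBgeB}

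\textbf{Overall approach.} The inequality $B'(t)t - B(t) \ge \delta B(t)$ is equivalent to $B'(t) t \ge (1+\delta) B(t)$, i.e.\ to a lower bound on the "elasticity" $t B'(t)/B(t)$ of $B$. The plan is to prove this separately under the two structural hypotheses, exploiting in each case the two-sided control on $B'$ and $B''$ that (A)\ensuremath{'} and (B)\ensuremath{'} provide near and away from the origin. The key mechanism is that near $t=0$ the function $B$ behaves like a pure power (like $t^p$ under (A)\ensuremath{'}, like $t^2$ under (B)\ensuremath{'}), for which the elasticity is exactly the exponent $>1$; and on a compact interval bounded away from $0$ one gets a uniform bound by compactness, using $B,B'>0$.

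\textbf{Case (A)\ensuremath{'}.} Here I would aim directly for the cleaner global statement. Write $B'(t) t - B(t) = \int_0^t \bigl(B'(t) - B'(s)\bigr)\,ds = \int_0^t \int_s^t B''(\sigma)\,d\sigma\,ds = \int_0^t \sigma B''(\sigma)\,d\sigma$ (Fubini). Using the lower bound $B''(\sigma) \ge \bar\gamma(\bar\kappa+\sigma)^{p-2}$ and the upper bound $B(t) = \int_0^t B'(s)\,ds \le \bar\Gamma \int_0^t (\bar\kappa+s)^{p-2} s\,ds$, the claim reduces to an elementary comparison of the two explicit integrals $\int_0^t \sigma(\bar\kappa+\sigma)^{p-2}d\sigma$; in fact both $B'(t)t-B(t)$ and $B(t)$ are, up to the constants $\bar\gamma,\bar\Gamma$, squeezed between fixed multiples of $\int_0^t \sigma(\bar\kappa+\sigma)^{p-2}\,d\sigma$ (for $B(t)$ one integrates by parts once to see $\int_0^t s(\bar\kappa+s)^{p-2}ds$ controls it up to a constant depending only on $p$). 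So $\delta$ can be taken depending only on $p$, $\bar\gamma/\bar\Gamma$, and this gives \eqref{B'tBgeB} for all $t\ge 0$. One should double-check the borderline behaviour when $p<2$ and $\bar\kappa=0$, but the integrals $\int_0^t \sigma^{p-1}\,d\sigma = t^p/p$ are still finite and the comparison goes through.

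\textbf{Case (B)\ensuremath{'}.} Now $B\in C^{3,\beta}_\loc([0,+\infty))$ with $B(0)=B'(0)=B'''(0)=0$ and $B''(0)>0$. By Taylor expansion, $B(t) = \tfrac{1}{2}B''(0) t^2 + o(t^2)$ and $B'(t)t - B(t) = \tfrac12 B''(0) t^2 + o(t^2)$ as $t\to 0^+$, so the ratio $\bigl(B'(t)t-B(t)\bigr)/B(t) \to 1$; hence there is $t_0>0$ with the ratio $\ge \tfrac12$ on $(0,t_0]$. On $[t_0,K]$ the functions $t\mapsto B'(t)t-B(t)$ and $t\mapsto B(t)$ are continuous and strictly positive — positivity of $B'(t)t-B(t)$ follows from $B''>0$ via the identity $B'(t)t-B(t)=\int_0^t\sigma B''(\sigma)\,d\sigma>0$ — so their ratio attains a positive minimum by compactness. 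Taking $\delta$ to be the smaller of $\tfrac12$ and this minimum finishes the case. (In (B)\ensuremath{'} one cannot hope for a global $\delta$ since $B$ is only quadratic-like, but quadratic growth $t^2$ still has elasticity $2$, so even a global statement would be plausible; the lemma only claims the local one, which is what the proof of Theorem~\ref{liouthm} needs via the uniform gradient bound $\nabla u\in L^\infty$.)

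\textbf{Main obstacle.} The computations are all elementary; the only delicate point is keeping the constant $\delta$ honest. Under (A)\ensuremath{'} I want $\delta$ independent of $t$ (global), so I must make sure the $\bar\kappa$-dependence does not degenerate — this is handled because the two explicit $\bar\kappa$-integrals differ only by a $p$-dependent multiplicative constant, uniformly in $\bar\kappa\in[0,1)$ and in $t$. Under (B)\ensuremath{'} the subtlety is purely that one must patch a Taylor-expansion argument near $0$ with a compactness argument on $[t_0,K]$, and ensure $B'(t)t - B(t)$ is genuinely bounded below (not just positive) on that compact set — which is immediate from continuity and strict positivity. So I expect no real obstruction, just care with the case split $p<2$ vs $p\ge 2$ mirroring the one already used in Lemma~\ref{BepsHlem}.
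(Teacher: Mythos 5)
Your proof is correct and, in case~(A)\ensuremath{'}, coincides with the paper's: both rest on the identity $B'(t)t-B(t)=\int_0^t sB''(s)\,ds$ together with the two-sided bounds of~(A)\ensuremath{'}, which squeeze both sides between multiples of the \emph{same} integral $\int_0^t(\bar\kappa+s)^{p-2}s\,ds$ and give $\delta=\bar\gamma/\bar\Gamma$ globally (no integration by parts or $p<2$ case split is actually needed). In case~(B)\ensuremath{'} your Taylor-expansion-plus-compactness patching works, but the paper runs the very same integral comparison after simply observing that $B''$ is pinched between two positive constants on all of $[0,K]$ (by continuity, $B''(0)>0$ and~\eqref{Bpos}), which yields $\delta=\bar\gamma/\bar\Gamma$ on $[0,K]$ in one stroke and avoids splitting near and away from the origin.
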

\begin{proof}
We begin by proving~\eqref{B'tBgeB} when~(A)\ensuremath{'} is in force. Since~$B(0) = B'(0) = 0$, we have
$$
B'(t) t - B(t) = \int_0^t B''(s) s \, ds \ge \bar{\gamma} \int_0^t \left( \bar{\kappa} + s \right)^{p - 2} s \, ds.
$$
On the other hand,
$$
B(t) = \int_0^t B'(s) \, ds \le \bar{\Gamma} \int_0^t \left( \bar{\kappa} + s \right)^{p - 2} s \, ds.
$$
By comparing these two expressions, we see that~\eqref{B'tBgeB} holds for any~$t \ge 0$, with~$\delta = \bar{\gamma} / \bar{\Gamma}$.

Then, we deal with case~(B)\ensuremath{'}. Fix~$K > 0$. Being~$B''(0) > 0$ and~$B(0) = B'(0) = 0$, it clearly exist~$\bar{\Gamma} \ge \bar{\gamma} > 0$ such that~$B''(t) \in \left[ \bar{\gamma}, \bar{\Gamma} \right]$,~for any~$t \in [0, K]$. Hence, as before we compute
$$
B'(t) t - B(t) = \int_0^t B''(s) s \, ds \ge \bar{\gamma} \int_0^t s \, ds = \frac{\bar{\gamma}}{2} t^2,
$$
for any~$t \in [0, K]$. Also,
$$
B(t) = \int_0^t \int_0^s B''(\sigma) \, d\sigma ds \le \frac{\bar{\Gamma}}{2} t^2,
$$
for any~$t \in [0, K]$, and again~\eqref{B'tBgeB} is proved.
\end{proof}

\begin{proof}[Proof of Theorem~\ref{liouthm}]
Combining Lemma~\ref{B'tBgeBlem} and~\cite[Theorem~1.1]{CFV14}, we deduce that
\begin{equation} \label{kinpotest}
B(H(\nabla u(x))) \le C G(u(x)) \mbox{ for any } x \in \R^n,
\end{equation}
for some constant~$C > 0$. We stress that, under hypothesis~$(ii)$ of Theorem~\ref{monformthm}, it is crucial that~$\nabla u$ is globally~$L^\infty$ in order to profitably apply Lemma~\ref{B'tBgeBlem}. Recalling the definition~\eqref{scaledWen} of the rescaled energy functional~$\E$, in view of~\eqref{kinpotest} and~\eqref{Gugrowth} we may conclude that
$$
\lim_{R \rightarrow +\infty} \E(R) \le (C + 1) \lim_{R \rightarrow +\infty} \frac{1}{R^{n - 1}} \int_{W_R} G(u(x)) \, dx = 0.
$$
But then, Theorem~\ref{monformthm} tells that~$\E$ is non-decreasing in~$R\in(0, +\infty)$ and, hence, for any~$r > 0$, we have
$$
0 \le \E(r) \le \lim_{R \rightarrow +\infty} \E(R) = 0,
$$
which yields~$\E \equiv 0$. Consequently,~$\nabla u \equiv 0$, i.e.~$u$ is constant.
\end{proof}

\section{On conditions~\eqref{HM} and~\eqref{FK}} \label{char1sec}

In the present section we prove Theorem~\ref{FKcharprop}, thus establishing a characterization of the anisotropies~$H$ which satisfy
\begin{equation} \tag{\ref*{FK}}
\langle H(\xi) \nabla H(\xi), H^*(x) \nabla H^*(x) \rangle = \langle \xi, x \rangle,
\end{equation}
for any~$\xi, x \in \R^n$. Indeed, we show that such requirement is necessary and sufficient for~$H$ to assume the form
\begin{equation} \tag{\ref*{HM}}
H_M(\xi) = \sqrt{\langle M \xi, \xi \rangle},
\end{equation}
for some symmetric and positive definite matrix~$M \in \Mat_n(\R)$.

We begin by showing the necessity of~\eqref{FK}. As a first step towards this aim, we compute the dual function~$H_M^*$.

\begin{lemma} \label{HMlem}
Let~$M \in \Mat_n(\R)$ be symmetric and positive definite. Then,~$H_M^* = H_{M^{-1}}$.
\end{lemma}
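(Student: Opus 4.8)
The plan is to compute $H_M^*$ directly from its definition~\eqref{H*def} and recognize the result. Recall that $H_M(\xi) = \sqrt{\langle M\xi, \xi\rangle}$ with $M$ symmetric positive definite, so we must evaluate
$$
H_M^*(x) = \sup_{\xi \in S^{n-1}} \frac{\langle x, \xi\rangle}{\sqrt{\langle M\xi, \xi\rangle}}.
$$
Since the quotient on the right is itself $0$-homogeneous in $\xi$, the supremum over the unit sphere coincides with the supremum over all $\xi \in \R^n \setminus \{0\}$, which removes the constraint and makes the optimization cleaner.

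First I would perform the substitution $\eta := M^{1/2}\xi$, which is legitimate because $M$ is symmetric positive definite and hence $M^{1/2}$ is a well-defined invertible symmetric matrix. Under this change of variables $\langle M\xi, \xi\rangle = |\eta|^2$ and $\langle x, \xi\rangle = \langle x, M^{-1/2}\eta\rangle = \langle M^{-1/2}x, \eta\rangle$, so
$$
H_M^*(x) = \sup_{\eta \in \R^n \setminus \{0\}} \frac{\langle M^{-1/2}x, \eta\rangle}{|\eta|} = |M^{-1/2}x|,
$$
the last equality being the Cauchy--Schwarz inequality together with the fact that equality is attained by $\eta = M^{-1/2}x$ (assuming $x \neq 0$; the case $x = 0$ is trivial). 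Then I would rewrite
$$
|M^{-1/2}x|^2 = \langle M^{-1/2}x, M^{-1/2}x\rangle = \langle M^{-1}x, x\rangle,
$$
using that $M^{-1/2}$ is symmetric and $(M^{-1/2})^2 = M^{-1}$. Hence $H_M^*(x) = \sqrt{\langle M^{-1}x, x\rangle} = H_{M^{-1}}(x)$, which is the claim. One should also note in passing that $M^{-1}$ is again symmetric and positive definite, so $H_{M^{-1}}$ is an admissible anisotropy of the form~\eqref{HM}.

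I do not anticipate any serious obstacle here: the argument is essentially the classical computation of the dual norm of an inner-product norm, and the only points requiring a word of care are the reduction from $S^{n-1}$ to $\R^n \setminus \{0\}$ via $0$-homogeneity, the invertibility of $M^{1/2}$, and the attainment of equality in Cauchy--Schwarz. If one prefers to avoid introducing $M^{1/2}$, an alternative is to use a Lagrange-multiplier or completion-of-the-square argument directly on $\langle x, \xi\rangle^2 / \langle M\xi, \xi\rangle$, but the square-root substitution is the shortest route and keeps the computation transparent.
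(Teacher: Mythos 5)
Your proof is correct and is essentially the same as the paper's: both reduce the computation of the dual norm to the Cauchy--Schwarz inequality with an explicitly attained maximizer, the only cosmetic difference being that you diagonalize via $M^{1/2}$ while the paper works directly with the inner product $\langle M\cdot,\cdot\rangle$ and tests $\xi = M^{-1}x$.
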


\begin{proof}
Being~$M$ positive definite and symmetric, the assignment
$$
\langle \xi, \eta \rangle_M := \langle M \xi, \eta \rangle,
$$
defines an inner product in~$\R^n$. We denote the induced norm by~$\| \cdot \|_M$. Also notice that~$M$ is invertible, so that~$H_{M^{-1}}$ is well defined.

Recalling definition~\eqref{H*def} of dual function and applying the Cauchy-Schwarz inequality to the inner product~$\langle \cdot, \cdot \rangle_M$, we obtain
\begin{align*}
H_M^*(x) & = \sup_{\xi \ne 0} \frac{\langle x, \xi \rangle}{\sqrt{\langle M \xi, \xi \rangle}} = \sup_{\xi \ne 0} \frac{\langle M (M^{-1} x), \xi \rangle}{\sqrt{\langle M \xi, \xi \rangle}} = \sup_{\xi \ne 0} \frac{\langle M^{-1} x, \xi \rangle_M}{\| \xi \|_M} \\
& \le \sup_{\xi \ne 0} \frac{\| M^{-1} x \|_M \| \xi \|_M}{\| \xi \|_M} = \| M^{-1} x \|_M \\
& = \sqrt{\langle M^{-1} x, x \rangle}.
\end{align*}
On the other hand, the choice~$\xi := M^{-1} x$ yields
$$
H_M^*(x) \ge \frac{\langle x, M^{-1} x \rangle}{\sqrt{\langle M M^{-1} x, M^{-1} x \rangle}} = \sqrt{\langle M^{-1} x, x \rangle}.
$$
Hence, recalling definition~\eqref{HM}, the thesis follows.
\end{proof}

With this in hand, we are now able to prove the following

\begin{lemma} \label{FKneclem}
Let~$M \in \Mat_n(\R)$ be a symmetric and positive definite matrix. Then, the norm~$H_M$ satisfies~\eqref{FK}.
\end{lemma}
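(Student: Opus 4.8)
The plan is to verify condition~\eqref{FK} for $H = H_M$ by a direct computation, exploiting the explicit formula for the dual function established in Lemma~\ref{HMlem}, namely $H_M^* = H_{M^{-1}}$. First I would record the gradients: differentiating $H_M(\xi) = \sqrt{\langle M\xi,\xi\rangle}$ gives $\nabla H_M(\xi) = \frac{M\xi}{\sqrt{\langle M\xi,\xi\rangle}} = \frac{M\xi}{H_M(\xi)}$ for $\xi \ne 0$, whence the key simplification
\begin{equation*}
H_M(\xi)\nabla H_M(\xi) = M\xi = \Psi_{H_M}(\xi).
\end{equation*}
Likewise, applying the same formula to $H^* = H_{M^{-1}}$ yields $H_M^*(x)\nabla H_M^*(x) = M^{-1}x$ for $x \ne 0$.

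With these two identities in hand, the left-hand side of~\eqref{FK} becomes
\begin{equation*}
\langle H_M(\xi)\nabla H_M(\xi),\, H_M^*(x)\nabla H_M^*(x)\rangle = \langle M\xi,\, M^{-1}x\rangle = \langle \xi,\, M^\top M^{-1} x\rangle = \langle \xi, x\rangle,
\end{equation*}
using the symmetry $M^\top = M$. This is exactly~\eqref{FK}. The only points needing a word of care are the boundary cases: when $\xi = 0$ or $x = 0$ both sides of~\eqref{FK} vanish trivially (recall $H_M$ and $H_M^*$ extend continuously with value $0$ at the origin, and $H(\xi)\nabla H(\xi) \to 0$ there by homogeneity), so~\eqref{FK} holds for all $\xi, x \in \R^n$ as required.

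I do not anticipate a genuine obstacle here; the lemma is essentially a bookkeeping consequence of Lemma~\ref{HMlem}. The one mild subtlety worth stating explicitly is that $\Psi_{H_M} = M\cdot$ is linear, so its inverse is $M^{-1}\cdot = \Psi_{H_{M^{-1}}} = \Psi_{H_M^*}$, which both reconfirms the general assertion of Lemma~\ref{H*reg} that $\Psi_H$ and $\Psi_{H^*}$ are mutually inverse and makes the identity $H_M^*(x)\nabla H_M^*(x) = M^{-1}x$ transparent. Everything else is the symmetry of $M$.
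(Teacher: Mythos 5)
Your proof is correct and follows essentially the same route as the paper: both compute $H_A(\xi)\nabla H_A(\xi)=A\xi$ for a symmetric matrix $A$, invoke Lemma~\ref{HMlem} to identify $H_M^*$ with $H_{M^{-1}}$, and conclude by the symmetry of $M$. The extra remark on the trivial cases $\xi=0$ or $x=0$ is a harmless addition.
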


\begin{proof}
The proof is a simple computation. Notice that for any symmetric~$A \in \Mat_n(\R)$ we have
$$
\partial_i \left( H_A^2(\xi) \right) = \partial_i \left( A_{j k} \xi_j \xi_k \right) = A_{j k} \delta_{j i} \xi_k + A_{j k} \xi_j \delta_{k i} = 2 A_{i j} \xi_j,
$$
for any~$\xi \in \R^n$, $i = 1, \ldots, n$. Thus, we get
$$
H_A(\xi) \partial_i H_A(\xi) = \frac{\partial_i \left( H_A^2(\xi) \right) }{2}= A_{i j} \xi_j.
$$
Applying then Lemma~\ref{HMlem} together with the identity yet obtained with both choices~$A = M$ and~$A = M^{-1}$, we obtain
\begin{align*}
\langle H_M(\xi) \nabla H_M(\xi), H_M^*(\eta) \nabla H_M^*(\eta) \rangle & = \langle H_M(\xi) \nabla H_M(\xi), H_{M^{-1}}(\eta) \nabla H_{M^{-1}}(\eta) \rangle \\
& = M_{i j} \xi_j M_{i k}^{-1} \eta_k \\
& = \delta_{j k} \xi_j \eta_k \\
& = \langle \xi, \eta \rangle,
\end{align*}
which is~\eqref{FK}.
\end{proof}

Now, we prove that the converse implication is also true. Hence,
Theorem~\ref{FKcharprop} will follow. Before addressing the actual proof, we need just another abstract lemma. We believe that the content of the following result will appear somewhat evident to the reader. However, we include both the formal statement and the proof.

\begin{lemma} \label{symlem}
Let~$\T: \R^n \rightarrow \R^n$ be symmetric with respect to the standard inner product in~$\R^n$, that is
\begin{equation} \label{sym}
\langle \T(v), w \rangle = \langle v, \T(w) \rangle,
\end{equation}
for any~$v, w \in \R^n$. Then,~$\T$ is a linear transformation, i.e.
$$
\T(v) = T v \mbox{ for any } v \in \R^n,
$$
for some symmetric~$T \in \Mat_n(\R)$
\end{lemma}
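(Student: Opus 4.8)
The plan is to reconstruct the candidate matrix $T$ directly from the action of $\T$ on the standard basis $e_1,\dots,e_n$, and then to use~\eqref{sym} twice: once to check that this matrix represents $\T$, and once to check that it is symmetric. The one conceptual point worth keeping in mind is that $\T$ is \emph{a priori} merely a map of sets, with no linearity assumed; linearity has to be squeezed out of the hypothesis rather than taken for granted.

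First I would test~\eqref{sym} against the basis vectors: for every $v\in\R^n$ and every index $i$, taking $w=e_i$ gives
$$
(\T(v))_i = \langle \T(v), e_i \rangle = \langle v, \T(e_i) \rangle .
$$
For fixed $i$ the right-hand side is a linear functional of $v$, so each component of $\T(v)$ is linear in $v$; this already forces $\T$ to be linear. Concretely, setting $T_{ij} := (\T(e_i))_j = \langle \T(e_i), e_j\rangle$, the identity above becomes $(\T(v))_i = \sum_j T_{ij} v_j$, that is, $\T(v) = T v$ for all $v\in\R^n$, where $T\in\Mat_n(\R)$.

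It then remains to verify that $T$ is symmetric, and here~\eqref{sym} is used a second time, now between two basis vectors: $T_{ij} = \langle \T(e_i), e_j\rangle = \langle e_i, \T(e_j)\rangle = \langle \T(e_j), e_i\rangle = T_{ji}$. This completes the argument. There is really no hard step; the only thing to be careful about is not to assume linearity of $\T$ from the outset, but to observe that the symmetry relation~\eqref{sym}, evaluated on the coordinate vectors, delivers it for free — after which the representation $\T(v)=Tv$ and the symmetry $T=T^{\top}$ are immediate one-line computations.
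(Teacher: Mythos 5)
Your argument is correct and is essentially identical to the paper's proof: both test \eqref{sym} against the standard basis vectors to obtain $[\T(v)]_i=\langle v,\T(e_i)\rangle$, read off the matrix $T_{ij}=[\T(e_i)]_j$, and invoke \eqref{sym} once more for the symmetry of $T$. No differences worth noting.
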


\begin{proof}
The conclusion follows by simply plugging~$w = e_i$ in~$\eqref{sym}$, where~$\{ e_i \}_{i = 1, \ldots, n}$ is the canonical basis in~$\R^n$. Indeed, we have
$$
{[\T(v)]}_i = \langle \T(v), e_i \rangle = \langle v, \T(e_i) \rangle
$$
for any~$v \in \R^n$,~$i = 1, \ldots, n$. Thus we may conclude that~$\T(v) = T v$, where~$T = [T_{i j}]_{i, j = 1, \ldots, n}$ is the matrix with entries
$$
T_{i j} = [\T(e_i)]_j.
$$
The symmetry of~$T$ clearly follows by employing~\eqref{sym} again.
\end{proof}

\begin{proof}[Proof of Theorem~\ref{FKcharprop}]
In view of Lemma~\ref{FKneclem}, it is only left to prove that, under condition~\eqref{FK},~$H$ is forced to be of the form~\eqref{HM}.

By Lemma~\ref{H*reg}, we know that the map~$\Psi_H: \R^n \to \R^n$, defined for~$\xi \in \R^n$ by
$$
\Psi_H(\xi) := H(\xi) \nabla H(\xi),
$$
is invertible with inverse~$\Psi_{H^*}$. Under this notation identity~\eqref{FK} may be read as
\begin{equation} \label{FKcond2}
\langle \Psi_H(\xi), \Psi_{H^*}(\eta) \rangle = \langle \xi, \eta \rangle,
\end{equation}
for any~$\xi, \eta \in \R^n$. Applying~\eqref{FKcond2} with~$\eta = \Psi_H(\zeta)$ we get
$$
\langle  \Psi_H(\xi), \zeta \rangle = \langle  \Psi_H(\xi), \Psi_H^{-1}(\eta) \rangle = \langle  \Psi_H(\xi), \Psi_{H^*}(\eta) \rangle = \langle  \xi, \eta \rangle = \langle \xi, \Psi_H(\zeta) \rangle,
$$
for any~$\xi, \zeta \in \R^n$. That is,~$\Psi_H$ is symmetric with respect to the standard inner product in~$\R^n$ and hence linear, by virtue of Lemma~\ref{symlem}. Therefore, there exists a symmetric~$M \in \Mat_n(\R)$ such that
$$
\nabla {\left( \frac{H^2(\xi)}{2} \right)} = H(\xi) \nabla H(\xi) = M \xi.
$$
This in turn implies that~$H = H_M$ and the proof of the proposition is complete.
\end{proof}

\section{On the weaker assumption~\eqref{FKweak}} \label{char2sec}

In this last section we study the condition
\begin{equation} \tag{\ref{FKweak}}
\sgn \langle H(\xi) \nabla H(\xi), H^*(x) \nabla H^*(x) \rangle = \sgn \langle \xi, x \rangle,
\end{equation}
for any~$\xi, x \in \R^n$, which has been introduced in the statement of Theorem~\ref{monformthm}. First, we have the following general result that provides a simpler equivalent form for assumption~\eqref{FKweak}.

\begin{proposition} \label{propFKweakeq}
Let~$H$ be a~$C^1(\R^n \setminus \{ 0 \})$ be a positive homogeneous function of degree~$1$ satisfying~\eqref{Hpos}. Assume the unit ball~$B_1^H$, as defined by~\eqref{Hball}, to be strictly convex. Then,~\eqref{FKweak} is equivalent to the condition
\begin{equation} \label{FK2zero}
\langle H(\xi) \nabla H(\xi), \eta \rangle = 0 \quad \mbox{ if and only if } \quad \langle \xi, H(\eta) \nabla H(\eta) \rangle = 0,
\end{equation}
for any~$\xi, \eta \in \R^n$.
\end{proposition}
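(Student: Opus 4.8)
The plan is to express everything in terms of the map $\Psi_H(\xi):=H(\xi)\nabla H(\xi)$, extended by $\Psi_H(0):=0$, which is continuous on $\R^n$ (because $H$ is $1$-homogeneous and $\nabla H$ is bounded on $S^{n-1}$). First I would record that, under the hypotheses of the proposition, $\Psi_H$ is a homeomorphism of $\R^n$ with inverse $\Psi_{H^*}:=H^*\nabla H^*$: the strict convexity of $B_1^H$ makes $H^*$ of class $C^1$ (so that~\eqref{FKweak} is even meaningful) and, by the usual convex/Legendre duality between $B_1^H$ and $B_1^{H^*}$, makes $\Psi_H$ and $\Psi_{H^*}$ mutually inverse homeomorphisms (see Lemma~\ref{H*reg} and~\cite{CS09}). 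Substituting $x=\Psi_H(\eta)$ and using that $\Psi_H$ is onto, condition~\eqref{FKweak} is then equivalent to
$$
\sgn \langle \Psi_H(\xi), \eta \rangle = \sgn \langle \xi, \Psi_H(\eta) \rangle \qquad \mbox{for all } \xi, \eta \in \R^n,
$$
while~\eqref{FK2zero} is literally the assertion that the right-hand side vanishes precisely when the left-hand side does. Hence the implication~\eqref{FKweak}$\Rightarrow$\eqref{FK2zero} is immediate, and the whole content of the proposition is the reverse implication.

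For that, I would fix $\xi \ne 0$ (the case $\xi = 0$ being trivial, since $\Psi_H(0)=0$). By Euler's identity~\eqref{i}, $\langle \nabla H(\xi), \xi\rangle = H(\xi) > 0$, so $\Psi_H(\xi) \ne 0$ and $Z:=\{\eta : \langle \Psi_H(\xi), \eta\rangle = 0\}$ is a hyperplane through the origin; by~\eqref{FK2zero} it coincides with $\{\eta : \langle \xi, \Psi_H(\eta)\rangle = 0\}$. The function $\eta \mapsto \langle \xi, \Psi_H(\eta)\rangle$ is continuous and nonvanishing off $Z$, hence of constant sign on each of the two open half-spaces $U_\pm := \{\eta : \pm\langle \Psi_H(\xi), \eta\rangle > 0\}$, these being connected.

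It then remains to decide which half-space carries which sign, and this is the only genuinely delicate point. I would settle it with two explicit evaluations. Taking $\eta = \xi$ gives $\langle \Psi_H(\xi), \xi\rangle = H(\xi)^2 > 0$ (so $\xi \in U_+$) and $\langle \xi, \Psi_H(\xi)\rangle = H(\xi)^2 > 0$, whence $\langle \xi, \Psi_H(\cdot)\rangle > 0$ throughout $U_+$. Then, using surjectivity of $\Psi_H$, I pick $\eta_0$ with $\Psi_H(\eta_0) = -\xi$; since $\langle \xi, \Psi_H(\eta_0)\rangle = -|\xi|^2 < 0$, condition~\eqref{FK2zero} forces $\eta_0 \notin Z$, and $\eta_0$ cannot lie in $U_+$ (where that quantity is positive), so $\eta_0 \in U_-$ and $\langle \xi, \Psi_H(\cdot)\rangle < 0$ throughout $U_-$. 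Assembling the three cases $\eta \in U_+$, $\eta \in U_-$, $\eta \in Z$ yields $\sgn\langle\Psi_H(\xi),\eta\rangle = \sgn\langle\xi,\Psi_H(\eta)\rangle$ for every $\eta$, and as $\xi$ was arbitrary this is precisely the reformulated~\eqref{FKweak}.

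The main obstacle is exactly this passage from the pointwise ``vanishing $\iff$ vanishing'' statement~\eqref{FK2zero} to the sign statement: because $\Psi_H$ is \emph{nonlinear} one cannot conclude by pure linear algebra, and one is forced to exploit the connectedness of $\R^n\setminus Z$ together with the two sign computations above (the first resting on Euler's relation~\eqref{i}, the second on the surjectivity of $\Psi_H$) to exclude the wrong orientation. Everything else — the reformulation via $\Psi_H$ and the convex-duality facts underlying $\Psi_{H^*}=\Psi_H^{-1}$ — is routine and already available in the preliminaries.
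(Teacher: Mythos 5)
Your proposal is correct and follows essentially the same route as the paper: reformulate \eqref{FKweak} as $\sgn\langle\Psi_H(\xi),\eta\rangle=\sgn\langle\xi,\Psi_H(\eta)\rangle$ via the inverse map $\Psi_{H^*}$, observe that \eqref{FK2zero} pins down the zero set of $h(\eta)=\langle\xi,\Psi_H(\eta)\rangle$ as the hyperplane $\langle\Psi_H(\xi),\eta\rangle=0$, and use connectedness of the two half-spaces plus explicit sign evaluations. The only (immaterial) difference is that the paper fixes the sign on the negative half-space by evaluating $h$ at $-\xi$ directly via Euler's identity, whereas you use surjectivity of $\Psi_H$ to produce a preimage of $-\xi$; both are fine.
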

\begin{proof}
First, we remark that, by arguing as in the proof of
Theorem~\ref{FKcharprop}, it is immediate to check that~\eqref{FKweak} can be put in the equivalent form
\begin{equation} \label{wFK2}
\sgn \langle H(\xi) \nabla H(\xi), \eta \rangle = \sgn \langle \xi, H(\eta) \nabla H(\eta) \rangle,
\end{equation}
for any~$\xi, \eta \in \R^n$. Thus, we need to show that~\eqref{FK2zero} is equivalent to~\eqref{wFK2}.

Notice that~\eqref{FK2zero} is trivially implied by~\eqref{wFK2}. Thus, we only need to prove that the converse is also true. To see this, assume~\eqref{FK2zero} to hold and fix~$\xi \in \R^n$. If~$\xi = 0$, then both sides of~\eqref{wFK2} vanish, in view of Lemma~\ref{derBH0}. Suppose therefore~$\xi \ne 0$ and consider the hyperplane
$$
\Pi := \left\{ \eta \in \R^n : \langle H(\xi) \nabla H(\xi), \eta \rangle = 0 \right\},
$$
together with the two half-spaces
$$
\Pi_\pm := \left\{ \eta \in \R^n : \pm \langle H(\xi) \nabla H(\xi), \eta \rangle > 0 \right\}.
$$
By virtue of~\eqref{FK2zero}, the function~$h: \R^n \to \R$, defined by setting
$$
h(\eta) := \langle H(\eta) \nabla H(\eta), \xi \rangle,
$$
vanishes precisely on~$\Pi$. Furthermore, by Lemma~\ref{derBH0} (with~$B(t) = t^2/2$),~$h$ is continuous on the whole of~$\R^n$ and it satisfies
$$
h(\xi) = \langle H(\xi) \nabla H(\xi), \xi \rangle = H^2(\xi) > 0.
$$
But~$\xi \in \Pi_+$, and so~$h$ is positive on~$\Pi_+$, being it connected. Analogously, it holds~$h(-\xi) < 0$ from which we deduce that~$h$ is negative on~$\Pi_-$. Thence,~\eqref{wFK2} follows.
\end{proof}

With the aid of Proposition~\ref{propFKweakeq}, we now restrict to the planar case~$n = 2$ and show that, in this case, all the even anisotropies satisfying~\eqref{FKweak} can be obtained by means of an explicit and operative formula. As a result, it will then become clear that~\eqref{FKweak} is a weaker assumption than~\eqref{FK}.

\begin{proposition} \label{2Dprop}
Let~$r: [0, \pi/2] \to (0, +\infty)$ be a given~$C^2$ function satisfying
\begin{equation} \label{rcond1}
r(\theta) r''(\theta) < 2 r'(\theta)^2 + r(\theta)^2 \mbox{ for a.a. } \theta \in \left[ 0, \frac{\pi}{2} \right],
\end{equation}
and
\begin{equation} \label{rcond2}
r(0) = 1, \qquad r(\pi/2) = r^*, \qquad r'(0) = r'(\pi/2) = 0,
\end{equation}
for some~$r^* \ge 1$. Consider the~$\pi$-periodic function~$\widetilde{r}: \R \to (0, +\infty)$ defined on~$[0, \pi]$ by
\begin{equation} \label{rtildef}
\widetilde{r}(\theta) :=
\begin{dcases}
r(\theta) & \quad {\mbox{if }} 0 \le \theta \le \frac{\pi}{2}, \\
\frac{r^* \sqrt{r(\tau^{-1}(\theta))^2 + r'(\tau^{-1}(\theta))^2}}{
r(\tau^{-1}(\theta))^2} & \quad {\mbox{if }} \frac{\pi}{2} \le 
\theta \le \pi,
\end{dcases}
\end{equation}
where~$\tau: [0, \pi/2] \to [\pi/2, \pi]$ is the bijective map given by
\begin{equation} \label{etaoftheta}
\tau(\eta) = \frac{\pi}{2} + \eta - \arctan \frac{r'(\eta)}{r(\eta)}.
\end{equation}
Then,~$\widetilde{r}$ is of class~$C^1(\R)$, the set
\begin{equation} \label{Cdef}
\left\{ (\rho \cos \theta, \rho \sin \theta) : \rho \in [0, \widetilde{r}(\theta)), \, \theta \in [0, 2 \pi] \right\},
\end{equation}
is strictly convex and its supporting function
$$
\widetilde{H}(\rho \cos \theta, \rho \sin \theta) := \frac{\rho}{\widetilde{r}(\theta)},
$$
defined for~$\rho \ge 0$ and~$\theta \in [0, 2 \pi]$, satisfies~\eqref{FK2zero}.

Furthermore, up to a rotation and a homothety of the plane~$\R^2$, any even positive $1$-homogeneous function~$H \in C^2(\R^2 \setminus \{ 0 \})$ satisfying~\eqref{Hpos}, having strictly convex unit ball~$B_1^H$ and for which~\eqref{FK2zero} holds true is such that~$B_1^H$ is of the form~\eqref{Cdef}, for some positive~$r \in C^2([0, \pi/2])$ satisfying~\eqref{rcond1} and~\eqref{rcond2}. 
\end{proposition}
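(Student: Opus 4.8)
The plan is to translate the algebraic condition \eqref{FK2zero} into a geometric statement about the Gauss map of the planar convex curve, and then to reduce everything to elementary facts about increasing involutions of the circle. Writing points as $\rho(\cos\theta,\sin\theta)$, the boundary of the unit ball of a $1$-homogeneous $H$ is the curve $\rho=\widetilde r(\theta):=1/H(\cos\theta,\sin\theta)$, and a direct computation shows that its outward normal at polar angle $\theta$ points in the direction $\nu(\theta):=\theta-\arctan(\widetilde r'(\theta)/\widetilde r(\theta))$, while $\Psi_H(\xi):=H(\xi)\nabla H(\xi)=\nabla(H^2/2)(\xi)$ is a positive multiple of $(\cos\nu(\arg\xi),\sin\nu(\arg\xi))$ (it is odd, $1$-homogeneous, and $\langle\Psi_H(\xi),\xi\rangle=H^2(\xi)>0$ by homogeneity). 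Hence $\langle\Psi_H(\xi),\eta\rangle=0$ exactly when $\arg\eta\equiv\tau(\arg\xi)\pmod\pi$, where $\tau:=\nu+\pi/2$, so that \eqref{FK2zero} is equivalent to asking that $\tau$, viewed as a map of $\R/\pi\Z$, be an involution. Since $\nu'=\tau'=(\widetilde r^2+2\widetilde r'^2-\widetilde r\widetilde r'')/(\widetilde r^2+\widetilde r'^2)$, condition \eqref{rcond1} is exactly the statement that $\tau$ is strictly increasing, that is, that the curve is strictly convex. I expect setting up this dictionary to be the conceptual heart of the argument.

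\textbf{Sufficiency.} Given $r$ on $[0,\pi/2]$ satisfying \eqref{rcond1}--\eqref{rcond2}, I would first observe that $\tau$ in \eqref{etaoftheta} is a $C^1$ strictly increasing bijection, and that $r'(0)=r'(\pi/2)=0$ together with \eqref{rcond2} give $\tau(0)=\pi/2$ and $\tau(\pi/2)=\pi$, so $\tau$ maps $[0,\pi/2]$ onto $[\pi/2,\pi]$. On $[\pi/2,\pi]$ the involutivity $\tau(\tau(\eta))=\eta+\pi$ forces, with $\theta=\tau(\eta)$, the relation $\nu(\theta)=\tau^{-1}(\theta)+\pi/2$, which unravels to the first-order identity
\begin{equation*}
\frac{\widetilde r'(\theta)}{\widetilde r(\theta)}=-\frac{r'(\tau^{-1}(\theta))}{r(\tau^{-1}(\theta))},\qquad\theta\in[\pi/2,\pi];
\end{equation*}
a short differentiation shows that the function prescribed by \eqref{rtildef} solves this identity and takes the value $r^*$ at $\theta=\pi/2$, hence it is the unique such extension. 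It then remains to check the gluing: at $\theta=\pi/2$ and, by $\pi$-periodicity, at $\theta=0\equiv\pi$, the one-sided values of $\widetilde r$ match and the one-sided derivatives both vanish (since $r'(0)=r'(\pi/2)=0$), giving $\widetilde r\in C^1(\R)$; $\nu$ is then strictly increasing on all of $\R$ with $\nu(\theta+2\pi)=\nu(\theta)+2\pi$, so the set \eqref{Cdef} is strictly convex; and $\tau$ is an involution on $\R/\pi\Z$ by construction, so $\widetilde H$ satisfies \eqref{FK2zero}.

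\textbf{Necessity.} For the converse, given $H$ as in the statement I would set $\widetilde r(\theta):=1/H(\cos\theta,\sin\theta)$, which is positive, $C^2$, and $\pi$-periodic by evenness. Strict convexity gives $\tau=\nu+\pi/2$ strictly increasing with $\tau(\theta+\pi)=\tau(\theta)+\pi$, and \eqref{FK2zero} gives $\tau^2\equiv\mathrm{id}\pmod\pi$; since $\arctan\in(-\pi/2,\pi/2)$ forces $\theta<\tau(\theta)<\theta+\pi$, the lift must satisfy $\tau^2(\theta)=\theta+\pi$, equivalently $\tau^{-1}(\theta)=\tau(\theta)-\pi$. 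Picking $\theta_0$ where $\widetilde r$ attains its minimum gives $\widetilde r'(\theta_0)=0$, hence $\nu(\theta_0)=\theta_0$ and $\tau(\theta_0)=\theta_0+\pi/2$; applying $\tau$ again yields $\tau(\theta_0+\pi/2)=\theta_0+\pi$, i.e. $\widetilde r'(\theta_0+\pi/2)=0$ as well. After the rotation sending $\theta_0$ to $0$ and the homothety normalising $\widetilde r(0)=1$, the function $r:=\widetilde r|_{[0,\pi/2]}$ is positive and $C^2$, satisfies \eqref{rcond2} with $r^*:=\widetilde r(\pi/2)\ge\min\widetilde r=1$, and satisfies \eqref{rcond1} everywhere by strict convexity. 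Finally, on $[\pi/2,\pi]$ the relation $\tau^{-1}(\theta)=\tau(\theta)-\pi$ is precisely the first-order identity above, whose solution with value $r^*$ at $\pi/2$ is \eqref{rtildef}; $\pi$-periodicity then pins down $\widetilde r$ on all of $\R$, so $B_1^H$ has the form \eqref{Cdef}.

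\textbf{Main obstacle.} The hard part is the opening reduction: recognising $\Psi_H$ as the Gauss direction and that \eqref{FK2zero} says exactly that $\tau$ is an involution on $\R/\pi\Z$, while \eqref{rcond1} says exactly that $\tau$ is increasing. Once this dictionary is in place, the remaining work, namely verifying that \eqref{rtildef} solves the forced first-order identity with the correct boundary value and checking the $C^1$ gluing at $\theta=\pi/2$ and $\theta=0$, is routine but slightly laborious; evenness is used only to push everything down to $\R/\pi\Z$.
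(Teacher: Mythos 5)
Your proposal is correct and follows essentially the same route as the paper: polar parametrization of $\partial B_1^H$, translation of \eqref{FK2zero} into the statement that the tangent-angle map $\tau(\eta)=\tfrac{\pi}{2}+\eta-\arctan(r'/r)$ is an involution mod $\pi$ (the paper's parallelism condition \eqref{FKparall}), the forced first-order relation $\widetilde{r}'/\widetilde{r}=-(r'/r)\circ\tau^{-1}$ leading to \eqref{rtildef}, and the $C^1$ gluing via $r'(0)=r'(\pi/2)=0$. The only notable divergence is that you deduce strict convexity of \eqref{Cdef} from the strict monotonicity of the extended tangent angle (on $[\pi/2,\pi]$ it is $\tau^{-1}+\pi/2$, the inverse of an increasing map) rather than from the paper's explicit computation of $\widetilde{r}''$ in \eqref{r''expl}; this is a slightly cleaner way to close that step.
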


Before heading to the proof of this proposition, we state the following auxiliary result.

\begin{lemma} \label{qinelem}
Let~$r: [0, \pi/2] \to (0, +\infty)$ be a~$C^2$ function that satisfies condition~\eqref{rcond1} and~$r'(0) = r'(\pi/2) = 0$. Then,
\begin{equation} \label{qine}
- \cot \eta < \frac{r'(\eta)}{r(\eta)} < \tan \eta,
\end{equation}
for any~$\eta \in (0, \pi/2)$.
\end{lemma}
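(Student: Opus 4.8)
The plan is to recast the two-sided bound~\eqref{qine} as a statement about the single quantity $q := r'/r$, and to exploit that both $\tan$ and $-\cot$ are solutions of the Riccati equation $y' = 1 + y^2$.

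First I would observe that, since $r \in C^2([0, \pi/2])$ and $r > 0$, the function $q = r'/r = (\log r)'$ is of class $C^1([0, \pi/2])$, with
$$
q' = \frac{r''}{r} - \left( \frac{r'}{r} \right)^2 = \frac{r''}{r} - q^2 .
$$
Dividing~\eqref{rcond1} by $r^2 > 0$ gives $r''/r < 1 + 2 q^2$ a.e., and hence
$$
q'(\eta) < 1 + q(\eta)^2 \qquad \mbox{for a.a. } \eta \in \left[ 0, \frac{\pi}{2} \right].
$$
Moreover $q(0) = q(\pi/2) = 0$, since $r'(0) = r'(\pi/2) = 0$.

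The key step is the substitution $Q := \arctan q$, which linearises the differential inequality. I would note that $Q$ is $C^1$, takes values in $(-\pi/2, \pi/2)$, satisfies $Q(0) = Q(\pi/2) = 0$, and that
$$
Q'(\eta) = \frac{q'(\eta)}{1 + q(\eta)^2} < 1 \qquad \mbox{a.e.}
$$
Integrating from $0$ yields $Q(\eta) = \int_0^\eta Q'(s) \, ds < \eta$ for $\eta \in (0, \pi/2]$, where the inequality is \emph{strict}: the function $1 - Q'$ is continuous, nonnegative and positive a.e., so $\int_0^\eta (1 - Q') > 0$ (equivalently, equality would force $q' = 1 + q^2$ throughout $[0, \eta]$, i.e. equality in~\eqref{rcond1} a.e., a contradiction). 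Integrating backwards from $\pi/2$ and using $Q(\pi/2) = 0$, the same argument gives $Q(\eta) = - \int_\eta^{\pi/2} Q'(s) \, ds > \eta - \frac{\pi}{2}$ for $\eta \in [0, \pi/2)$. Combining, for every $\eta \in (0, \pi/2)$ we have $\eta - \frac{\pi}{2} < Q(\eta) < \eta$.

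To conclude I would apply $\tan$, which is strictly increasing on $(-\pi/2, \pi/2)$: for $\eta \in (0, \pi/2)$ all of $\eta - \frac{\pi}{2}$, $Q(\eta)$, $\eta$ lie in that interval, so
$$
- \cot \eta = \tan \left( \eta - \frac{\pi}{2} \right) < \tan(Q(\eta)) = q(\eta) = \frac{r'(\eta)}{r(\eta)} < \tan \eta,
$$
which is exactly~\eqref{qine}. I do not expect a genuine obstacle here; the only points demanding care are (i) the endpoint behaviour — the strict upper bound on $Q$ fails at $\eta = 0$ and the strict lower bound at $\eta = \pi/2$, which is precisely why~\eqref{qine} is stated on the open interval — and (ii) upgrading the a.e. inequality coming from~\eqref{rcond1} to a genuinely strict bound for $Q$, which is handled by the continuity of $Q'$ as indicated above.
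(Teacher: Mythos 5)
Your proof is correct and follows essentially the same route as the paper's: both set $q = r'/r$, pass to $\arctan q$, observe that~\eqref{rcond1} is exactly the statement that its derivative is $<1$ a.e., and integrate from $0$ (for the upper bound) and backwards from $\pi/2$ (for the lower bound). Your write-up is somewhat more explicit about the strictness of the integrated inequality and about the backward integration, which the paper dispatches with ``a similar argument,'' but the underlying idea is identical.
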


\begin{proof}
For any~$\eta \in (0, \pi / 2)$, we set
$$
q(\eta) := \frac{r'(\eta)}{r(\eta)}
$$
Being the tangent function increasing, we see that the right inequality in~\eqref{qine} is satisfied if and only if
\begin{equation} \label{ftech}
f(\eta) := \arctan q(\eta) < \eta.
\end{equation}
Since
$$
q'(\eta) = \frac{r(\eta) r''(\eta) - r'(\eta)^2}{r(\eta)^2},
$$
we see that, for a.e.~$\eta \in (0, \pi / 2)$,
$$
f'(\eta) = \frac{q'(\eta)}{1 + q(\eta)^2} = \frac{r(\eta) r''(\eta) - r'(\eta)^2}{r(\eta)^2 + r'(\eta)^2} < \frac{r(\eta)^2 + r'(\eta)^2}{r(\eta)^2 + r'(\eta)^2} = 1,
$$
by virtue of~\eqref{rcond1}. Observing that~$f(0) = 0$, we then conclude that
$$
f(\eta) = \int_0^\eta f'(t) \, dt < \eta,
$$
which is~\eqref{ftech}. A similar argument shows that also the left inequality in~\eqref{qine} holds true.
\end{proof}

\begin{proof}[Proof of Proposition~\ref{2Dprop}]
Let~$H \in C^2(\R^2 \setminus \{ 0 \})$ be a given norm. Notice that the boundary of its unit ball~$B_1^H$ may be written in polar coordinates as
$$
\partial B_1^H = \left\{ \gamma(\theta) : \theta \in [0, 2 \pi] \right\},
$$
where
\begin{equation} \label{gammadef}
\gamma(\theta) = (r(\theta) \cos \theta, r(\theta) \sin \theta),
\end{equation}
for some~$\pi$-periodic~$r \in C^2(\R)$. Recall that the curvature of such a curve~$\gamma$ is given by
\begin{equation} \label{kdef}
k(\theta) = \frac{2 r'(\theta)^2 - r(\theta) r''(\theta) + r(\theta)^2}{\left[ r(\theta)^2 + r'(\theta)^2 \right]^{3/2}},
\end{equation}
for any~$\theta \in [0, 2 \pi]$. Hence, hypothesis~\eqref{rcond1} tells us that~$\gamma$ has positive curvature, outside at most a set of zero measure, and, thus, that~$B_1^H$ is strictly convex.

We also remark that condition~\eqref{FK2zero} is equivalent to saying that, for any~$\theta, \eta \in [0, 2 \pi]$,
\begin{equation} \label{FKparall}
\gamma'(\theta) \parallel \gamma(\eta) \quad \mbox{ if and only if } \quad \gamma(\theta) \parallel \gamma'(\eta).
\end{equation}
This can be seen by noticing that~$\nabla H(\gamma(\theta))$ is orthogonal to~$\partial B_1^H$ while~$\gamma'(\theta)$ is tangent.

At a point~$\theta^* \in [0, 2 \pi]$ such that
$$
r(\theta^*) = \max_{\theta \in \R} \, r(\theta) =: r^*,
$$
we clearly have~$r'(\theta^*) = 0$. Assuming, up to a rotation and a homothety of~$\R^2$, that~$\theta^* = \pi / 2$ and~$r(0) = 1$, it is immediate to check, by computing
\begin{equation} \label{gamma'}
\gamma'(\theta) = \left( r'(\theta) \cos \theta - r(\theta) \sin \theta, r'(\theta) \sin \theta + r(\theta) \cos \theta \right),
\end{equation}
that condition~\eqref{FK2zero}, in its form~\eqref{FKparall}, forces~$r$ to satisfy~\eqref{rcond2}.

Now, take~$r \in C^2([0, \pi / 2])$ as in the statement of the proposition. We shall show that the function~$\widetilde{r}$ defined by~\eqref{rtildef} is the only extension of~$r$ which determines a curve~$\gamma$ satisfying condition~\eqref{FKparall}. Notice that, by the periodicity of~$\widetilde{r}$, it is enough to prove it for~$\theta, \eta \in [0, \pi]$. Moreover, if~$\theta, \eta \in \{ 0, \pi/2, \pi \}$, then~\eqref{FKparall} is implied by~\eqref{rcond2}. Consider now~$\eta \in (0, \pi/2)$. We address the problem of finding the unique~$\theta =: \tau(\eta) \in (0, \pi)$ such that~$\gamma(\theta) \parallel \gamma'(\eta)$. First observe that this condition is equivalent to requiring
\begin{equation} \label{cottheta}
\cot \theta = \frac{r'(\eta) \cos \eta - r(\eta) \sin \eta}{r'(\eta) \sin \eta + r(\eta) \cos \eta} = \frac{\frac{r'(\eta)}{r(\eta)} - \tan \eta}{\frac{r'(\eta)}{r(\eta)} \tan \eta + 1} = \tan \left( \arctan \frac{r'(\eta)}{r(\eta)} - \eta \right),
\end{equation}
in view of~\eqref{gammadef} and~\eqref{gamma'}. Then, we see that, by~\eqref{gamma'} and Lemma~\ref{qinelem},~$\gamma'(\eta)$ and, therefore,~$\gamma(\theta)$ lie in the second quadrant. Thus, we conclude that~$\theta \in (\pi / 2, \pi)$. Moreover, with this in hand and using again Lemma~\ref{qinelem}, it is easy to deduce from~\eqref{cottheta} that
\begin{equation} \label{tauofeta}
\theta = \tau(\eta) = \frac{\pi}{2} + \eta - \arctan \frac{r'(\eta)}{r(\eta)},
\end{equation}
for any~$\eta \in \left[ 0, \pi / 2 \right]$. Condition~\eqref{FKparall} then implies that~$\gamma'(\theta) \parallel \gamma(\eta)$, which yields~\eqref{cottheta} with~$\eta$ and~$\theta$ interchanged. Comparing the two formulae, we deduce that~$\widetilde{r}$ should satisfy
\begin{equation} \label{r'overr}
\frac{\widetilde{r}'(\tau(\eta))}{\widetilde{r}(\tau(\eta))} = - \frac{r'(\eta)}{r(\eta)},
\end{equation}
for any~$\eta \in \left[ 0, \pi / 2 \right]$. From this relation it is possible to recover the explicit form of~$\widetilde{r}$. In order to do this, we multiply by~$\tau'(\eta)$ both sides of~\eqref{r'overr} and integrate. The left hand side becomes
\begin{equation} \label{lhsr'overr}
\int_0^\eta \frac{\widetilde{r}'(\tau(t))}{\widetilde{r}(\tau(t))} \tau'(t) \, dt = \log \frac{\widetilde{r}(\tau(\eta))}{\widetilde{r}(\tau(0))} = \log \frac{\widetilde{r}(\tau(\eta))}{r^*}.
\end{equation}
The expansion of the right hand side requires a little bit more care. For simplicity of exposition, we will omit to evaluate~$r$ and its derivatives at~$\eta$. We deduce from~\eqref{tauofeta} that
\begin{equation} \label{tau'}
\tau' = 1 - \frac{r r'' - r'^2}{r^2 + r'^2} = \frac{r^2 + 2 r'^2 - r r''}{r^2 + r'^2}.
\end{equation}
Then, since
$$
\Big[ \log \Big( r \left( r^2 + r'^2 \right) \Big) \Big]' = \frac{3 r^2 r' + r'^3 + 2 r r' r''}{r \left( r^2 + r'^2 \right)},
$$
we compute
\begin{align*}
- \frac{r'}{r} \tau' & = - \frac{r^2 r' + 2 r'^3 - r r' r''}{r \left( r^2 + r'^2 \right)} \\
& = \frac{1}{2} \Big[ \log \Big( r \left( r^2 + r'^2 \right) \Big) \Big]' - \frac{5}{2} \frac{r^2 r' + r'^3}{r \left( r^2 + r'^2 \right)} \\
& = \frac{1}{2} \Big[ \log \Big( r \left( r^2 + r'^2 \right) \Big) - 5 \log r \Big]' \\
& = \frac{1}{2} \left[ \log \frac{r^2 + r'^2}{r^4} \right]'.
\end{align*}
Integrating this last expression we get
\begin{equation} \label{rhsr'overr}
- \int_0^\eta \frac{r'(t)}{r(t)} \tau'(t) \, dt = \frac{1}{2} \log \left( \frac{r(\eta)^2 + r'(\eta)^2}{r(\eta)^4} \frac{r(0)^4}{r(0)^2 + r'(0)^2} \right) = \frac{1}{2} \log \frac{r(\eta)^2 + r'(\eta)^2}{r(\eta)^4}.
\end{equation}
By comparing~\eqref{lhsr'overr} and~\eqref{rhsr'overr}, we immediately obtain that~$\widetilde{r}$ satisfies~\eqref{rtildef}.

Now we show that~$\widetilde{r}$ has the desired regularity properties. From its definition and~\eqref{r'overr} is immediate to see that~$\widetilde{r}$ is continuous on the whole~$[0, \pi]$ and differentiable on~$(0, \pi / 2) \cup (\pi/2, \pi)$. Thus, we only need to check~$\widetilde{r}'$ at~$0$,~$\pi/2$ and~$\pi$. Using~\eqref{r'overr} and~\eqref{rcond2}, we compute
\begin{equation} \label{r'jointN}
\widetilde{r}' \left( {\frac{\pi}{2}}^+ \right) = - \frac{r'(0) \, \widetilde{r} \left( \frac{\pi}{2} \right)}{r(0)} = 0 = \widetilde{r}' \left( {\frac{\pi}{2}}^- \right),
\end{equation}
and
\begin{equation} \label{r'jointEO}
\widetilde{r}'(\pi^-) = - \frac{r' \left( \frac{\pi}{2} \right) \widetilde{r}(\pi)}{r \left( \frac{\pi}{2} \right)} = 0 = \widetilde{r}'(0^+).
\end{equation}
Being it~$\pi$-periodic, it follows that~$\widetilde{r} \in C^1(\R)$.

Finally, we prove that the set~\eqref{Cdef} is strictly convex. To see this, it is enough to show that~$\widetilde{r}$ satisfies~\eqref{rcond1} for almost any~$\theta \in [\pi/2, \pi]$. First, we check that~$\widetilde{r}$ possesses almost everywhere second derivative. Indeed, by differentiating~\eqref{r'overr} we get
\begin{equation} \label{r''impl}
\left( \frac{\widetilde{r}''(\tau(\theta))}{\widetilde{r}(\tau(\theta))} - \frac{\widetilde{r}'(\tau(\theta))^2}{\widetilde{r}(\tau(\theta))^2} \right) \tau'(\theta) = - \frac{r''(\theta)}{r(\theta)} + \frac{r'(\theta)^2}{r(\theta)^2}.
\end{equation}
Thus, if~$\tau'(\theta) \ne 0$, which is true at almost any~$\theta \in [0, \pi / 2]$ in view of~\eqref{tau'} and~\eqref{rcond1}, we may solve~\eqref{r''impl} for~$\widetilde{r}''$ and obtain
\begin{equation} \label{r''expl}
\begin{aligned}
\widetilde{r}''(\tau(\theta)) & = \frac{\widetilde{r}'(\tau(\theta))^2}{\widetilde{r}(\tau(\theta))} - \frac{\widetilde{r}(\tau(\theta))}{\tau'(\theta)} \left( \frac{r''(\theta)}{r(\theta)} - \frac{r'(\theta)^2}{r(\theta)^2} \right) \\
& = \frac{\widetilde{r}'(\tau(\theta))^2}{\widetilde{r}(\tau(\theta))} - \frac{\widetilde{r}(\tau(\theta)) \left( r(\theta)^2 + r'(\theta)^2 \right) \left( r(\theta) r''(\theta) - r'(\theta)^2 \right)}{r(\theta)^2 \left( r(\theta)^2 + 2 r'(\theta)^2 - r(\theta) r''(\theta) \right)},
\end{aligned}
\end{equation}
where in last line we made use of~\eqref{tau'}. With this in hand and recalling~\eqref{r'overr}, we are able to compute that
\begin{align*}
\widetilde{r}(\tau) \widetilde{r}''(\tau) - 2 \widetilde{r}'(\tau)^2 - \widetilde{r}(\tau)^2 & = \widetilde{r}'(\tau)^2 - \frac{\widetilde{r}(\tau)^2 (r^2 + r'^2) (r r'' - r'^2)}{r^2 (r^2 + 2 r'^2 - r r'')} - 2 \widetilde{r}'(\tau)^2 - \widetilde{r}(\tau)^2 \\
& = - \widetilde{r}(\tau)^2 \left( \frac{r'^2}{r^2} + \frac{(r^2 + r'^2) (r r'' - r'^2)}{r^2 (r^2 + 2 r'^2 - r r'')} + 1 \right) \\
& = - \frac{\widetilde{r}(\tau)^2 (r^2 + r'^2)^2}{r^2 (r^2 + 2 r'^2 - r r'')} \\
& < 0,
\end{align*}
almost everywhere in~$[0, \pi / 2]$. Thus, the proof is complete.
\end{proof}

In view of Proposition~\ref{2Dprop}, every even anisotropy~$H$ satisfying~\eqref{FKweak} is uniquely determined by its values on the first quadrant. Conversely, any positive~$r \in C^2([0, \pi / 2])$ for which~\eqref{rcond1} and~\eqref{rcond2} are true can be extended to~$[0, \pi]$ (in a unique way) to obtain a~$C^1$ norm satisfying~\eqref{FKweak}.

An example of such an anisotropy, which is not of the trivial type~\eqref{HM}, is given by
$$
\hat{H}_p(\xi) = \begin{cases}
|\xi|_p & \quad \mbox{if } \xi_1 \xi_2 \ge 0, \\
|\xi|_q & \quad \mbox{if } \xi_1 \xi_2 < 0,
\end{cases}
$$
where~$| \cdot |_p$ is the standard~$p$-norm in~$\R^2$ and~$q = p / (p - 1)$ is the conjugate exponent of~$p$, for~$p \in (2, +\infty)$ (see Figure~\ref{pnormsplot} below). It can be easily checked that~$\hat{H}_p$ satisfies~\eqref{FKweak} from formulation~\eqref{FK2zero}.

\begin{figure}[h]
\centering
\includegraphics[width=\textwidth]{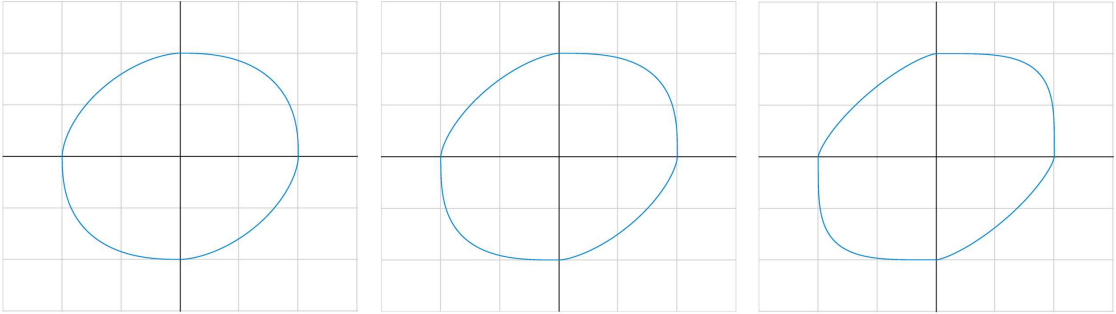}
\vspace{-25pt}
\caption{The unit circles of~$\hat{H}_p$ for the values~$p = 5/2$,~$3$ and~$4$.}
\label{pnormsplot}
\end{figure}

Unfortunately,~$\hat{H}_p$ is no more than~$C^{1, 1 / (p - 1)}_\loc(\R^2 \setminus \{ 0 \})$. If one is interested in norms having higher regularity properties, additional hypotheses on the behaviour of the defining function~$r$ of its unit ball inside the first quadrant need to be imposed. In particular, assumption~\eqref{rcond1} should be strengthened by requiring it to hold at \emph{any}~$\theta \in [0, \pi / 2]$. As a consequence, the class of norms under analysis is restricted to those being uniformly elliptic. 

In order to deal with, say,~$C^{3, \alpha}$ anisotropies, we have the following result.

\begin{proposition} \label{2Dhrprop}
Let~$\alpha \in (0, 1]$ and~$H \in C^{3, \alpha}_\loc(\R^2 \setminus \{ 0 \})$ be an even positive homogeneous function of degree~$1$ for which~\eqref{Hpos} holds true. Then,~$H$ is uniformly elliptic and satisfies~\eqref{FK2zero} if and only if, up to a rotation and a homothety of~$\R^2$, its unit ball is of the form~\eqref{Cdef}, where~$\widetilde{r}$ is given by~\eqref{rtildef} and~$r \in C^{3, \alpha}([0, \pi / 2])$ is a positive function satisfying
\begin{gather} \label{rcond1str}
r(\theta) r''(\theta) < 2 r'(\theta)^2 + r(\theta)^2 \mbox{ for any } \theta \in \left[ 0, \frac{\pi}{2} \right], \\
\label{rcond3}
r'' \left( \frac{\pi}{2} \right) = - \frac{r^* r''(0)}{1 - r''(0)}, \qquad
r''' \left( \frac{\pi}{2} \right) = - \frac{r^* r'''(0)}{(1 - r''(0))^3},
\end{gather}
and~\eqref{rcond2}.
\end{proposition}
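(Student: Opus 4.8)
The plan is to bootstrap Proposition~\ref{2Dprop}. Since a uniformly elliptic anisotropy has uniformly --- hence strictly --- convex unit ball and $C^{3,\alpha}_\loc(\R^2\setminus\{0\})\subset C^2(\R^2\setminus\{0\})$, in the ``only if'' direction Proposition~\ref{2Dprop} applies and gives, up to a rotation and a homothety, that $B_1^H$ has the form~\eqref{Cdef} with $\widetilde r$ given by~\eqref{rtildef} for some positive $r\in C^2([0,\pi/2])$ satisfying~\eqref{rcond1} and~\eqref{rcond2}; conversely, any positive $r\in C^{3,\alpha}([0,\pi/2])$ obeying~\eqref{rcond1str} --- which in particular forces~\eqref{rcond1} --- and~\eqref{rcond2} yields, again by Proposition~\ref{2Dprop}, an even anisotropy with unit ball~\eqref{Cdef} satisfying~\eqref{FK2zero}. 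It thus remains to match the pair (uniform ellipticity, $C^{3,\alpha}$ regularity) with the pair of extra requirements (\eqref{rcond1str}, \eqref{rcond3}).

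First I would characterise uniform ellipticity. In the plane the second fundamental form of $\partial B_1^H$ coincides with its curvature, and $\nabla H\neq0$ on $\partial B_1^H$ because $\xi\cdot\nabla H(\xi)=H(\xi)>0$ by~\eqref{i} and~\eqref{Hpos}; hence, $\partial B_1^H$ being compact, by~\eqref{Huniell} uniform ellipticity is equivalent to strict positivity of the curvature at every point. On the first-quadrant arc, formula~\eqref{kdef} turns this into the \emph{pointwise} inequality~\eqref{rcond1str}. On the arc $\theta\in[\pi/2,\pi]$ I would reuse the identity established inside the proof of Proposition~\ref{2Dprop},
\[
\widetilde r(\tau)\,\widetilde r''(\tau)-2\widetilde r'(\tau)^2-\widetilde r(\tau)^2=-\frac{\widetilde r(\tau)^2\bigl(r^2+r'^2\bigr)^2}{r^2\bigl(r^2+2r'^2-rr''\bigr)},
\]
which is valid wherever $\tau'\neq0$, that is, by~\eqref{tau'}, precisely where~\eqref{rcond1str} holds; its left-hand side is negative exactly when~\eqref{rcond1str} holds at the corresponding first-quadrant angle, and via~\eqref{kdef} this is positivity of the curvature on $[\pi/2,\pi]$. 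The junction angles $\theta\in\{0,\pi/2,\pi\}$ are handled by continuity (equivalently by $r''(0),r''(\pi/2)<1$). Therefore $H$ is uniformly elliptic if and only if~\eqref{rcond1str} holds for \emph{every} $\theta\in[0,\pi/2]$.

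Then I would treat the $C^{3,\alpha}$ regularity. By $1$-homogeneity, $H\in C^{3,\alpha}_\loc(\R^2\setminus\{0\})$ if and only if the $\pi$-periodic radial function $\widetilde r>0$ lies in $C^{3,\alpha}(\R)$, and on $(0,\pi/2)$ this is just $r\in C^{3,\alpha}([0,\pi/2])$. If $r\in C^{3,\alpha}$ and~\eqref{rcond1str} holds, then the map $\tau$ of~\eqref{etaoftheta} is $C^{2,\alpha}$ with $\tau'>0$, so $\tau^{-1}\in C^{2,\alpha}$, while~\eqref{r'overr} together with~\eqref{rtildef} gives $\widetilde r'(\tau(\eta))=-\widetilde r(\tau(\eta))\,r'(\eta)/r(\eta)$ with $\widetilde r(\tau(\eta))=r^*\sqrt{r(\eta)^2+r'(\eta)^2}/r(\eta)^2\in C^{2,\alpha}$; hence $\widetilde r'\in C^{2,\alpha}((\pi/2,\pi))$, i.e.\ $\widetilde r\in C^{3,\alpha}((\pi/2,\pi))$ with no additional condition. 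Consequently $\widetilde r\in C^{3,\alpha}(\R)$ reduces to the matching of $\widetilde r''$ and $\widetilde r'''$ across $\theta=\pi/2$ and $\theta=0$, since $\widetilde r\in C^1(\R)$ is already known (cf.~\eqref{r'jointN}--\eqref{r'jointEO}). Evaluating~\eqref{r''expl} and its $\eta$-derivative as $\eta\to0^+$, where $r(0)=1$ and $r'(0)=0$ trigger the simplifications, yields
\[
\widetilde r''\bigl({\textstyle\frac{\pi}{2}}^+\bigr)=-\frac{r^*\,r''(0)}{1-r''(0)},\qquad\widetilde r'''\bigl({\textstyle\frac{\pi}{2}}^+\bigr)=-\frac{r^*\,r'''(0)}{(1-r''(0))^3},
\]
so that continuity of $\widetilde r''$ and $\widetilde r'''$ at $\pi/2$ is exactly~\eqref{rcond3}; the analogous matching at $\theta=0\equiv\pi$, read off from~\eqref{r''expl} as $\eta\to(\pi/2)^-$ with $r(\pi/2)=r^*$ and $r'(\pi/2)=0$, returns the very same two relations. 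In the reverse direction, $H\in C^{3,\alpha}_\loc$ forces $\widetilde r\in C^{3,\alpha}(\R)$, whence $r\in C^{3,\alpha}([0,\pi/2])$ and, by the matching at $\pi/2$, condition~\eqref{rcond3}. Combining this with the ellipticity characterisation proves the statement.

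I expect the main obstacle to be the explicit computation of $\widetilde r'''({\pi/2}^+)$: it requires differentiating the already cumbersome expression~\eqref{r''expl} once more and passing to the limit $\eta\to0^+$, relying on $r'(0)=0$ to collapse the many terms and to produce the clean cube $(1-r''(0))^3$. A secondary point needing care is the gain of one order of regularity for $\widetilde r$ on $(\pi/2,\pi)$ hidden in the structural identity~\eqref{r'overr}: it is exactly this gain that makes~\eqref{rcond3} the only obstruction, beyond $r\in C^{3,\alpha}$, to the $C^{3,\alpha}$-smoothness of the anisotropy.
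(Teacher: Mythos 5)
Your proposal is correct and follows essentially the same route as the paper: reduce to Proposition~\ref{2Dprop}, identify uniform ellipticity with the pointwise curvature inequality~\eqref{rcond1str} (propagated to $[\pi/2,\pi]$ via the identity derived from~\eqref{r''expl}), and obtain~\eqref{rcond3} as the matching conditions for $\widetilde r''$ and $\widetilde r'''$ at the junction angles, the matching at $0\equiv\pi$ being the algebraic inverse of (hence equivalent to) that at $\pi/2$. Only a trivial slip: the endpoint form of~\eqref{rcond1str} at $\theta=\pi/2$ reads $r''(\pi/2)<r^*$, not $<1$.
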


Notice that the quantities appearing in both right hand sides of condition~\eqref{rcond3} are finite, as one can see by plugging~$\theta = 0$ in~\eqref{rcond1str} and recalling~\eqref{rcond2}.

\begin{proof}[Proof of Proposition~\ref{2Dhrprop}]
In addition to the regularity properties of the extension~$\widetilde{r}$, by Proposition~\ref{2Dprop} we only need to investigate the relation between~\eqref{rcond1str} and the uniformly convexity of the unit ball of~$H$. Notice that in~$2$ dimensions this last requirement is just asking the curvature~$k(\theta)$, as defined by~\eqref{kdef}, to be positive at any angle~$\theta \in [0, 2 \pi]$. Hence, we see that it implies~\eqref{rcond1str}.

To check that also the converse implication is valid, it is enough to prove that if~\eqref{rcond1str} is in force, then~$\widetilde{r}$ satisfies the same inequality at any~$\theta \in [\pi / 2, \pi]$. A careful inspection of the proof of Proposition~\ref{2Dprop} - see, in particular, the argument starting below formula~\eqref{r''impl} - shows that this is true at any point~$\theta$ for which~$\tau'(\tau^{-1}(\theta)) \ne 0$. But then, comparing formula~\eqref{tau'} with~\eqref{rcond1str} we have that~$\tau' > 0$ on the whole interval~$[0, \pi / 2]$ and so we are done.

The only thing we still have to verify is that, given~$r \in C^{3, \alpha}([0, 2 \pi])$, then its extension~$\widetilde{r}$ belongs to~$C^{3, \alpha}(\R)$. Arguing as in the proof of Proposition~\ref{2Dprop}, by~\eqref{rtildef},~\eqref{r''expl} and~\eqref{rcond1str} we deduce that~$\widetilde{r}$ is of class~$C^1$ on the whole of~$\R$ and $C^{3, \alpha}$ outside of the points~$k \pi / 2$, with~$k \in \Z$. Moreover, by the periodicity properties of~$\widetilde{r}$, we can reduce our analysis to the points~$0$,~$\pi/2$ and~$\pi$.
Using~\eqref{tau'} and~\eqref{rcond2}, we compute
\begin{equation} \label{tau'eval}
\tau'(0) = 1 - r''(0), \qquad \tau' \left( \frac{\pi}{2} \right) = \frac{r^* - r'' \left( \frac{\pi}{2} \right)}{r^*},
\end{equation}
and so, by~\eqref{r''expl},~\eqref{tauofeta},~\eqref{rcond2},~\eqref{r'jointN},~\eqref{r'jointEO} and~\eqref{rcond3}, we have
\begin{align*}
\widetilde{r}'' \left( \frac{\pi}{2}^+ \right) & = \frac{\widetilde{r}' \left( \frac{\pi}{2} \right)^2}{\widetilde{r} \left( \frac{\pi}{2} \right)} - \frac{\widetilde{r} \left( \frac{\pi}{2} \right)}{\tau'(0)} \left( \frac{r''(0)}{r(0)} - \frac{r'(0)^2}{r(0)^2} \right) \\
& = - \frac{r^* r''(0)}{1 - r''(0)} = r'' \left( \frac{\pi}{2} \right) = \widetilde{r}'' \left( \frac{\pi}{2}^- \right),
\end{align*}
and
\begin{align*}
\widetilde{r}''(\pi^-) & = \frac{\widetilde{r}'(\pi)^2}{\widetilde{r}(\pi)} - \frac{\widetilde{r}(\pi)}{\tau' \left( \frac{\pi}{2} \right)} \left( \frac{r'' \left( \frac{\pi}{2} \right)}{r \left( \frac{\pi}{2} \right)} - \frac{r' \left( \frac{\pi}{2} \right)^2}{r \left( \frac{\pi}{2} \right)^2} \right) \\
& = - \frac{r'' \left( \frac{\pi}{2} \right)}{r^* - r'' \left( \frac{\pi}{2} \right)} = r''(0) = \widetilde{r}''(0^+).
\end{align*}
Hence,~$\widetilde{r} \in C^2(\R)$. Now we study the third derivative of~$\widetilde{r}$. By differentiating~\eqref{r''expl} we get
\begin{equation} \label{r'''}
\begin{aligned}
\widetilde{r}'''(\tau) & = \frac{\widetilde{r}'(\tau) \left( 2 \widetilde{r}(\tau) \widetilde{r}''(\tau) - \widetilde{r}'(\tau)^2 \right)}{\widetilde{r}(\tau)^2} \\
& \quad - \frac{\left( \widetilde{r}'(\tau) \tau'^2 - \widetilde{r}(\tau) \tau'' \right) \left( r r'' - r'^2 \right)}{r^2 \tau'^3} - \frac{\widetilde{r}(\tau) \left( r^2 r''' - 3 r r' r'' +2 r'^3 \right)}{r^3 \tau'^2},
\end{aligned}
\end{equation}
where every function is meant to be evaluated at~$\theta$. Moreover, from~\eqref{tau'} we deduce that
\begin{align*}
\tau'' & = - \frac{\left( r' r'' + r r''' - 2 r' r'' \right) \left( r^2 + r'^2 \right) - 2 \left( r r'' - r'^2 \right) \left( r r' + r' r'' \right)}{\left( r^2 + r'^2 \right)^2} \\
& = \frac{3 r^2 r' r'' - r'^3 r'' - r^3 r''' - r r'^2 r''' + 2 r r' r''^2 - 2 r r'^3}{\left( r^2 + r'^2 \right)^2},
\end{align*}
so that, recalling~\eqref{rcond2}, we have
$$
\tau''(0) = - r'''(0), \qquad \tau'' \left( \frac{\pi}{2} \right) = - \frac{r''' \left( \frac{\pi}{2} \right)}{r^*}.
$$
Thus, by plugging these identities into~\eqref{r'''}, using~\eqref{tauofeta},~\eqref{rcond2},~\eqref{tau'eval},~\eqref{r'jointN},~\eqref{r'jointEO} and~\eqref{rcond3} we finally conclude that
\begin{align*}
\widetilde{r}''' \left( \frac{\pi}{2}^+ \right) & = \frac{\widetilde{r} \left( \frac{\pi}{2} \right) \tau''(0) r''(0)}{r(0) \tau'(0)^3} - \frac{\widetilde{r} \left( \frac{\pi}{2} \right) r'''(0)}{r(0) \tau'(0)^2} = - \frac{r^* r''(0) r'''(0)}{\left( 1 - r''(0) \right)^3} - \frac{r^* r'''(0)}{\left( 1 - r''(0) \right)^2} \\
& = - \frac{r^* r'''(0)}{(1 - r''(0))^3} = r''' \left( \frac{\pi}{2} \right) = \widetilde{r}''' \left( \frac{\pi}{2}^- \right),
\end{align*}
and
\begin{align*}
\widetilde{r}'''(\pi^-) & = \frac{\widetilde{r}(\pi) \tau'' \left( \frac{\pi}{2} \right) r'' \left( \frac{\pi}{2} \right)}{r \left( \frac{\pi}{2} \right) \tau' \left( \frac{\pi}{2} \right)^3} - \frac{\widetilde{r} (\pi) r''' \left( \frac{\pi}{2} \right)}{r \left( \frac{\pi}{2} \right) \tau' \left( \frac{\pi}{2} \right)^2} = - \frac{r^* r'' \left( \frac{\pi}{2} \right) r''' \left( \frac{\pi}{2} \right)}{\left( r^* - r'' \left( \frac{\pi}{2} \right) \right)^3} - \frac{r^* r''' \left( \frac{\pi}{2} \right)}{\left( r^* - r'' \left( \frac{\pi}{2} \right) \right)^2} \\
& = - \frac{{r^*}^2 r''' \left( \frac{\pi}{2} \right)}{\left( r^* - r'' \left( \frac{\pi}{2} \right) \right)^3} = r'''(0) = \widetilde{r}'''(0^+).
\end{align*}
As a result,~$\widetilde{r} \in C^{3, \alpha}(\R)$ and the proof of the proposition is complete.
\end{proof}

We observe that Proposition~\ref{2Dprop-i} 
is a consequence of Propositions~\ref{2Dprop}.
and~\ref{2Dhrprop}.

\begin{remark}\label{final}
We point out that it is easy to construct norms which are smooth and satisfy~\eqref{FKweak} as small perturbations of those of the form~\eqref{HM}. For instance, fix any~$\psi \in C^\infty([0, \pi / 2])$ having support compactly contained in~$(0, \pi / 2)$. Then, for~$\varepsilon > 0$ define
$$
r_\psi(\theta) := 1 + \varepsilon \psi(\theta),
$$
for any~$\theta \in \left[ 0, \pi / 2 \right]$. Observe that conditions~\eqref{rcond2} and~\eqref{rcond3} are satisfied with~$r^* = 1$. Moreover, we compute
\begin{align*}
r_\psi r_\psi'' - 2 r_\psi'^2 - r_\psi^2 & = \varepsilon^2 (1 + \varepsilon \psi) \psi'' - 2 \varepsilon^2 \psi'^2 - (1 + \varepsilon \psi)^2 \\
& = - 1 + \varepsilon \left( - 2 \psi + \varepsilon \left( (1 + \varepsilon \psi) \psi'' - 2 \psi'^2 - \psi^2 \right) \right) \\
& \le - 1 + c_\psi \varepsilon,
\end{align*}
with~$c_\psi$ dependent on the~$C^2$ norm of~$\psi$. Therefore, if we take~$\varepsilon$ small enough, then~$r_\psi$ satisfies~\eqref{rcond1str} and, by virtue of Proposition~\ref{2Dhrprop} the associated norm~$H_\psi$ is as desired.
\end{remark}

\end{document}